\def\A{\mathcal A}
\def\uu{\mathbf{u}}
\def\vv{\mathbf{v}}
\newcommand{\red}[1]{\textcolor{red}{#1}}
\newcommand{\teckaa}[2]{#2\raisebox{-1.5ex}{\hspace{-0.5ex}\textcolor{#1}{\small$\bullet $}}}
\newcommand{\tec}{\raisebox{-0.2ex}{\small$\bullet $}}
\theoremstyle{definition}
\newtheorem{definition}{Definition}
\newtheorem{corollary}[definition]{Corollary}
\newtheorem{remark}[definition]{Remark}
\newtheorem{example}[definition]{Example}
\theoremstyle{plain}
\newtheorem{theorem}[definition]{Theorem}
\newtheorem{proposition}[definition]{Proposition}
\newtheorem{lemma}[definition]{Lemma}
\newtheorem{observation}[definition]{Observation}
\author[L. Dvo\v r\'akov\'a and M. Moravcov\'a]{Lubom\'ira Dvo\v r\'akov\'a \and Martina Moravcov\'a\thanks{Supported by Czech Technical University in Prague, through the project SGS23/187/OHK4/3T/14}}
  \title{Attractors of sequences coding $\beta$-integers}
\affiliation{
  % one line per affiliation, no postal codes, grant numbers or similar
  Czech Technical University in Prague, Czech Republic}
\keywords{simple Parry number, non-simple Parry number, simple Parry sequence, non-simple Parry sequence, beta-integers, minimal string attractor}
\begin{document}
% This is only used if you are compiling for a volume before vol 25
% \publicationdetails{VOL}{2015}{ISS}{NUM}{SUBM}
% This is the new form of collecting the data, starting with vol 25
%\publicationdata{vol. 26:3}{2024}{2}{10.46298/dmtcs.12385}{2023-10-10; 2023-10-10; 2024-03-04; 2024-07-18}{2024-07-23}

\maketitle

\begin{abstract}
In this paper, we describe minimal string attractors of prefixes of simple Parry sequences. These sequences form a~coding of distances between consecutive $\beta$-integers in numeration systems with a~real base $\beta$. 
Simple Parry sequences have been recently studied by Gheeraert, Romana, and Stipulanti from this point of view and attractors of prefixes have been described. However, the authors of that study themselves had doubts about their minimality and conjectured that attractors of alphabet size should be sufficient. We confirm their conjecture. Moreover, we provide minimal attractors of prefixes of some particular form of binary non-simple Parry sequences.

\end{abstract}

\section{Introduction}
Recently, the concept of string attractors has been a subject of study in theoretical computer science. It plays an important role, especially in the field of data compression. This concept was introduced and first studied by Kempa and Prezza~\cite{KempaPrezza2018}: a \emph{string attractor} of a finite word $w=w_0w_1\cdots w_{n-1}$, where $w_i$ are letters, is a subset $\Gamma$ of $\{0,1,\dots, n-1\}$ such that each non-empty factor of $w$ has an occurrence containing an element of $\Gamma$.  
In general, however, the problem of finding an attractor of minimal size of a word is NP-complete. Therefore, it is natural to study attractors in the field of combinatorics on words for specific significant word classes, where the properties of such words are extensively exploited and the problem thus becomes solvable. 

To date, minimal attractors have been found for factors / prefixes / particular prefixes of several classes of sequences (infinite words)~\cite{Mantaci2021, Dv2023, Shallit2021, Kutsukake2020, Dolce2023, Kociumaka2021, DvHe2024}. The relation between new string attractor-based complexity functions and other well-known combinatorial complexity functions was studied in~\cite{Ca2024}. String attractors in bi-infinite sequences were discussed in~\cite{BeGhMe2024} and effective algorithms for checking attractors and producing attractors of reasonable size were described in~\cite{BeCrRo2025}.

Recently, Gheeraert, Romana, and Stipulanti~\cite{GhRoSt2024} have described attractors of prefixes of simple Parry sequences. Slightly more general fixed points were studied there. The minimal attractors have been found in the case of simple Parry sequences with affine factor complexity. (For the description of parameters guaranteeing affine factor complexity see~\cite{BeMaPe2007}). 

Parry sequences are closely connected to non-standard numeration systems where instead of an integer base one considers a real base $\beta > 1$. Every non-negative real number $x$ may be expressed using the base $\beta$ in the form
\begin{equation*}
    x = \sum_{i = -\infty}^{k} x_i\beta^i, \quad \text{where $x_i \in \mathbb{N}$, $k \in \mathbb{Z}$ and $x_k \not= 0$}\,.
\end{equation*}
For negative numbers $x$ the minus sign is employed.
Such a representation may be obtained using the greedy algorithm; we then speak about the {\em $\beta$-expansion}. 
Numbers whose $\beta$-expansion has a vanishing fractional part, i.e., numbers in the form $\pm \sum_{i=0}^k x_i\beta^i$, where $k \in \mathbb{N}$, are called {\em $\beta$-integers}. The set of $\beta$-integers $\mathbb Z_\beta$ is therefore an analogue of integers for non-integer bases. If there are only finitely many different distances between consecutive $\beta$-integers, we may code them with letters. The sequences obtained are called {\em simple / non-simple Parry sequences} and $\beta$ is called a {\em simple / non-simple Parry number}.
Parry sequences have been studied by many authors from different points of view~\cite{AmFrMaPe2006, BaKlPe2011, BaMa2009, BeMaPe2007, DoMa2015, KlPe2009, MaPe2011, Tu2015}.

In this article, we follow up on the work~\cite{GhRoSt2024}. We describe minimal attractors of prefixes of simple Parry sequences, which confirm the hypothesis from~\cite{GhRoSt2024}, where the authors believed that attractors of alphabet size exist. 
Moreover, we describe attractors of prefixes of some particular form of binary non-simple Parry sequences, which is a partial answer to another open question from the same paper.

This paper starts with preliminaries, where we define attractors and mention their basic properties. In Section~\ref{sec:Parry}, we introduce $\beta$-integers and numeration systems and describe their relation to simple / non-simple Parry sequences. We recall how to obtain Parry sequences as fixed points of morphism. 

In Section~\ref{sec:simpleParry}, we describe attractors of prefixes of simple Parry sequences. We consider separately two cases. Under some additional assumptions, the attractors of prefixes form a subset of $\{|\varphi^n(0)|-1 \ : \ n \in \mathbb N\}$, where the considered Parry sequence is the fixed point of $\varphi$. See Theorem~\ref{veta: SP_atraktory}. When we relax the additional conditions, attractors of alphabet size may still be found; however, they do not form a subset of $\{|\varphi^n(0)|-1 \ : \ n \in \mathbb N\}$ anymore. See Theorem~\ref{veta: SP_atraktory2}. We illustrate the results on various examples. 

In Section~\ref{sec:nonsimpleParry}, attractors of prefixes of some particular form of binary non-simple Parry sequences are provided. 
At the end of the paper, we mention some open questions. 

\section{Preliminaries}
\label{Section_Preliminaries}
An {\em alphabet} $\A$ is a finite set of symbols, called {\em letters}. A {\em word of length $n$} over $\A$ is a finite sequence $u = u_0u_1 \cdots u_{n-1}$, where $u_i \in \A$. The length of $u$ is denoted by $|u|$. The set $\A^*$ consists of all finite words over $\A$. This set with the operation of concatenation forms a monoid, the neutral element is the {\em empty word} $\varepsilon$. We denote $\A^+ = \A^* \setminus \{\varepsilon\}$.  An infinite sequence $\uu = u_0u_1u_2 \cdots$, where $u_i \in \A$, is called an {\em infinite word} or simply a {\em sequence} over $\A$. Sequences will be denoted by bold letters.

Let $u \in \A^*$, $u = xyz$ for some $x,y,z \in \A^*$. The word $x$ is called a~{\em prefix} of $u$, $z$ a~{\em suffix} of $u$ and $y$ a~{\em factor} of $u$.

Consider $\uu$ a sequence over $\A$, $\uu = u_0u_1u_2 \cdots$. A~word $y$ such that $y = u_iu_{i+1}u_{i+2} \cdots u_{j-1}$ for some $i, j \in \mathbb{N}$, $i \leq j$, is called a~{\em factor} of $\uu$. If $i=j$, then $y = \varepsilon$. If $i = 0$, then $y$ is called a~{\em prefix} of~$\uu$. The set $\{i, i+1, i+2, \ldots, j-1\}$ is said to be an {\em occurrence} of $y$ in the sequence $\uu$.~\footnote{It is more common to call only $i$ an occurrence of $y$ in $\uu$, but in the context of attractors, the modified definition is more suitable.} An occurrence of a~factor in a~finite word is defined analogously. 

Let $u \in \A^*$. Denote $u^k = uu \cdots u$ (i.e., the word $u$ repeated $k$ times), where $k \in \mathbb{N}$, and call it the $k$-th {\em power} of $u$. Similarly, $u^{\omega} = uuu \cdots$ denotes an infinite concatenation of $u$. A~sequence $\uu$ over $\A$ is called {\em eventually periodic} if $\uu = vw^{\omega}$ for some $v \in \A^*$ and $w \in \A^+$. In particular, $\uu$ is {\em periodic} if $v = \varepsilon$. Furthermore, $\uu$ is {\em aperiodic} if $\uu$ is not eventually periodic. 

A~factor $w$ of a sequence $\uu$ over $\A$ is called a~{\em left special factor} if $aw$, $bw$ are factors of $\uu$ for two distinct letters $a, b \in \A$. We say that $\uu$ is {\em closed under reversal} if for each factor $w = w_0w_1 \cdots w_{n-1}$ of $\uu$ its {\em reversal} $w_{n-1} \cdots w_1w_0$ is a factor of $\uu$, too. A~binary sequence $\uu$ is called {\em Sturmian} if $\uu$ is closed under reversal and $\uu$ contains exactly one left special factor of every length. 

A~mapping $\varphi:\A^* \rightarrow \A^*$ satisfying for all $u, v \in \A^*$
\begin{equation*}
    \varphi(uv) = \varphi(u)\varphi(v)
\end{equation*}
is called a~{\em morphism}. Let $\uu$ be a~sequence over $\mathcal A$, $\uu = u_0u_1u_2 \cdots$. The morphism may be applied also to sequences 
\begin{equation*}
    \varphi(\uu) = \varphi(u_0u_1u_2 \cdots) = \varphi(u_0)\varphi(u_1)\varphi(u_2) \cdots
\end{equation*}
A~sequence $\uu$ is a~{\em fixed point of the morphism} $\varphi$ if $\varphi(\uu)=\uu$.

Let $u, v \in \A^*$. We say that the word $u$ is a~{\em power} of the word $v$ if $u = v^kv'$, where $k \in \mathbb{N}$ and $v'$ is a prefix of $v$. For instance,  the word $barbar=(bar)^2$ is a square of $bar$ or the word $salsa$ is a power of $sal$. 

\begin{definition}
Let $\uu$, $\vv$ be two sequences over $\{0, 1, \ldots, d\}$ for some $d \in \mathbb{N}$, $\uu = u_0u_1u_2 \cdots$ and $\vv = v_0v_1v_2 \cdots$. We say that $\uu$ is {\em lexicographically smaller (greater)} than $\vv$, we write $\uu \prec_{\text{lex}} \vv$ ($\uu \succ_{\text{lex}} \vv$), if for the smallest index $i\in \mathbb{N}$ such that $u_i \neq v_i$ we have $u_i < v_i$ ($u_i>v_i$). 
\end{definition}

\subsection{Attractors}

A~\emph{(string) attractor} of a finite word $w=w_0w_1\cdots w_{n-1}$, where $w_i$ are letters, is a subset $\Gamma$ of $\{0,1,\dots, n-1\}$ such that each non-empty factor of $w$ has an occurrence in $w$ containing an element of $\Gamma$. 
If $i \in \Gamma$ and the word $f$ has an occurrence in $w$ containing $i$, we say that $f$ crosses $i$ and we also say that $f$ crosses the attractor $\Gamma$. An attractor of the word $w$ with the minimal number of elements is called a~{\em minimal attractor} of the word $w$. For example, $\Gamma = \{0,1,5\}$ is an attractor of the word $w = \red{{a}}\red{n}ana\red{s}$ (the letters corresponding to the positions of $\Gamma$ are written in red). The factors $an$ and $ana$ cross the positions $0$ and $1$, the factor $na$ crosses the position $1$, and all of them thus cross the attractor $\Gamma$. This attractor is minimal since every attractor of $w$ necessarily contains occurrences of all distinct letters of $w$.

Let us state a simple observation concerning the attractors of powers of words.
\begin{observation}\label{lem: atraktory_faktor}
Let $x$ be a power of a word $z$ and $x = z^nz'$, where $n \in \mathbb{N}, n\geq 1$, and $z'$ is a prefix of $z$. Let $f$ be a factor of $x$. 
\begin{itemize}
\item If $f$ has an occurrence in $x$ crossing $i|z|-1$ for some $i \in \mathbb{N}$, $1\leq i < n$, then $f$ has an occurrence in $x$ crossing $j|z|-1$ for each $j \in \mathbb{N}$, $1\leq j < n$. 
\item If $f$ has an occurrence in $x$ crossing $|z|-1$, but $f$ has no occurrence crossing $n|z|-1$, then $f$ is a~suffix of $z^{n-1}z''$, where $z''$ is a prefix of $z$ and $|z|>|z''|>|z'|$.
\end{itemize}
\end{observation}

The following useful lemma is taken from the paper~\cite{GhRoSt2024} (Proposition 8).
It also easily follows from the above observation. 

\begin{lemma}\label{lem: atraktory_mocnina}
Let $x, y$ be powers of a word $z$ and $|z| \leq |x| \leq |y|$. If $\Gamma$ is an attractor of $x$, then $\Gamma \cup \{|z| - 1\}$ is an attractor of $y$.
\end{lemma}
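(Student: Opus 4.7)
The plan is to show that every non-empty factor $f$ of $y$ has an occurrence in $y$ crossing $\Gamma \cup \{|z|-1\}$. First I would set up notation by writing $x = z^m z'$ and $y = z^n z''$ with $m, n \geq 1$ (the lower bound $|z| \leq |x|$ forces $m \geq 1$, and analogously $n \geq 1$) and with $z', z''$ prefixes of $z$. A basic but essential observation is that $x$ is a prefix of $y$: both are prefixes of the infinite periodic word $z^\omega$ and $|x| \leq |y|$. This is what will allow me to transplant occurrences found in $x$ back into $y$.

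The heart of the argument is the periodicity of $y$. Because $z''$ is a prefix of $z$, one checks directly that $y_i = y_{i+|z|}$ for every $i \in \{0, 1, \ldots, |y|-|z|-1\}$. Consequently, any occurrence of a factor $f$ at some position $p$ of $y$ can be shifted backwards by $|z|$ and still give a valid occurrence of $f$ in $y$; iterating, I obtain an occurrence of $f$ starting at position $p' = p \bmod |z| \in \{0, 1, \ldots, |z|-1\}$.

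Given this leftmost-shifted occurrence $\{p', p'+1, \ldots, p'+|f|-1\}$, I would conclude by a short dichotomy. If $p' + |f| - 1 \geq |z| - 1$, then the interval already contains $|z|-1$ and we are done. Otherwise $|f| < |z| - p' \leq |z|$, so the occurrence fits inside the first copy of $z$ in $y$; in particular $f$ is a factor of $z$, and hence a factor of $x$, since $z$ is a prefix of $x$ (as $m \geq 1$). The attractor property of $\Gamma$ then produces an occurrence of $f$ in $x$ crossing $\Gamma$, and since $x$ is a prefix of $y$ this is simultaneously an occurrence in $y$ crossing $\Gamma$.

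I do not expect a serious obstacle here. The only technical point worth attention is verifying that the backward shifts by $|z|$ remain inside $y$ and continue to spell $f$; this uses that the $|z|$-periodicity of $y$ persists through the $z''$-suffix, not just through the pure $z^n$ part. Observation~\ref{lem: atraktory_faktor} is morally the same periodicity phenomenon packaged differently, so the proof above can also be read as an application of that observation to $y$ itself, combined with the fact that short factors lying inside the initial $z$-block are already attracted by $\Gamma$.
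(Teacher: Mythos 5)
Your proof is correct and takes essentially the same route as the paper, which gives no detailed argument but only remarks that the lemma follows from Observation~\ref{lem: atraktory_faktor}; the content of that observation is precisely the $|z|$-periodicity shift you carry out explicitly. Your dichotomy (the shifted occurrence either reaches position $|z|-1$, or the factor fits inside the initial copy of $z$ and is therefore a factor of $x$ attracted by $\Gamma$) is exactly the standard way to fill in that remark.
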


A useful straightforward consequence is summarized in the following corollary.
\begin{corollary}\label{lem: atraktory_mocnina_jednodušší}
Let $x$ be a power of a word $z$ and $|z| \leq |x|$. If $\Gamma$ is an attractor of $z$, then $\Gamma \cup \{|z| - 1\}$ is an attractor of $x$. 
\end{corollary}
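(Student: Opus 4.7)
The plan is to derive this directly from Lemma~\ref{lem: atraktory_mocnina} by specializing one of its parameters. The key observation is that the word $z$ is trivially a power of itself: taking $n=1$ and $z'=\varepsilon$ in the definition, we have $z = z^1\varepsilon$, so $z$ qualifies as a power of $z$.

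With this in mind, I would apply Lemma~\ref{lem: atraktory_mocnina} with its roles instantiated as follows: the ``$z$'' of the lemma is our $z$, the ``$x$'' of the lemma is chosen to be $z$ itself, and the ``$y$'' of the lemma is our $x$. The hypotheses translate as: $z$ and $x$ are both powers of $z$ (first by the observation above, second by assumption), and the length inequality $|z| \leq |z| \leq |x|$ holds because $|z|\leq|x|$ is assumed. Since $\Gamma$ is an attractor of the word $z$ (our hypothesis), the lemma yields that $\Gamma \cup \{|z|-1\}$ is an attractor of $x$, which is exactly the conclusion of the corollary.

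There is no real obstacle here; the whole content is recognizing that $z$ itself can play the role of the smaller power in Lemma~\ref{lem: atraktory_mocnina}. The only minor care needed is to make the trivial identification $z = z^1\varepsilon$ explicit so that the hypothesis ``$x$ is a power of $z$'' in the lemma is formally satisfied when we set $x:=z$.
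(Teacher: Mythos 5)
Your specialization of Lemma~\ref{lem: atraktory_mocnina} (taking the lemma's smaller power to be $z$ itself, via $z=z^1\varepsilon$, and its $y$ to be our $x$) is exactly the intended argument; the paper presents the corollary as a ``straightforward consequence'' of that lemma without spelling out the instantiation. Your proof is correct and matches the paper's approach.
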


\section{Parry sequences and non-standard numeration systems}\label{sec:Parry}
As mentioned in the introduction, Parry sequences code distances between consecutive $\beta$-integers, where $\beta$-integers generalize the notion of integers to numeration systems with a real base $\beta>1$. In this section, we describe this concept in a more formal way. We draw information from~\cite{Lothaire} (Chapter 7 Numeration systems).

\subsection{$\beta$-expansion}

\begin{definition}
Consider a base $\beta \in \mathbb{R}$, $\beta > 1$, and $x \in \mathbb{R}$, $x \geq 0$. If
\begin{equation*}
    x = \sum_{i = -\infty}^k x_i\beta^i, \quad  \text{where $k \in \mathbb{Z}$, $x_k \neq 0$ and $x_i \in \mathbb{N}$  for each $i \in \mathbb{Z}$, $i \leq k$}\,,
\end{equation*}
then $x_k \cdots x_1x_0\tec  x_{-1}x_{-2} \cdots$ is called a~$\beta$-{\em representation} of $x$. In particular, a~$\beta$-representation of $x$ obtained by the {\em greedy algorithm} is called the $\beta$-{\em expansion} of $x$ and is denoted by $\langle x \rangle_{\beta}$.
\end{definition}

% \noindent \textbf{Hladový algoritmus pro $\beta$-rozvoj}\\
% Nechť je dán základ $\beta \in \mathbb{R}$, $\beta > 1$ a $x \in \mathbb{R}$, $x \geq 0$. Hladový algoritmus pro $\beta$-rozvoj $x$ probíhá následujícím způsobem:
% \begin{enumerate}
%     \item pro $x=0$ je $\langle x \rangle_\beta =0\tec  0^{\omega}$,
%     \item pro $x>0$ najdeme největší $k \in \mathbb{Z}$ takové, že $\beta^k \leq x < \beta^{k+1}$,
%     \item položíme $x_k := \lfloor \frac{x}{\beta^k} \rfloor$,
%     \item položíme $\tilde{x} := x - x_k\beta^k$,
%     \item pro $\tilde{x}=0$ má $\langle x \rangle_\beta$ na $k$-té pozici $x_k$ a jinde nuly,
%     \item pro $\tilde x>0$ najdeme největší $\tilde{k} \in \mathbb{Z}$ takové, že $\beta^{\tilde{k}} \leq \tilde{x} < \beta^{\tilde{k}+1}$,
%     \item položíme $x_{\tilde{k}} := \lfloor \frac{\tilde{x}}{\beta^{\tilde{k}}} \rfloor$ a $x_l := 0$ pro $l \in \mathbb{Z}$, $\tilde{k} < l < k$,
%     \item položíme $\tilde{\tilde{x}} := \tilde{x} - x_{\tilde{k}}\beta^{\tilde{k}}$,
%     \item pokračujeme obdobným způsobem dále.
% \end{enumerate}

\begin{example}
For $\beta=\frac{1+\sqrt{5}}{2}$, it holds $\beta^2=\beta+1$. We have $$\langle 3 \rangle_{\beta}=100\tec  01, \quad \langle \sqrt 5\rangle_\beta=10\tec  1 \quad \text{or} \quad \langle{\beta^2}/{2}\rangle_\beta=1\tec  (001)^{\omega}\,.$$ 
\end{example}

Thanks to the greedy algorithm, the lexicographic order of $\beta$-expansions corresponds to the order of non-negative real numbers. 

\begin{lemma}\label{lem:radix_velikost}
Consider a real base $\beta > 1$ and $x, y \in \mathbb{R}$, $x \geq 0$, $y \geq 0$, with $\langle x \rangle_{\beta} = x_k \cdots x_1x_0\tec  x_{-1}x_{-2} \cdots$ and $\langle y \rangle_{\beta} = y_\ell \cdots y_1y_0\tec  y_{-1}y_{-2} \cdots$. Then $x<y$ if and only if $k < \ell$ or $k = \ell$ and $\langle x \rangle_{\beta} \prec_{\text{lex}} \langle y \rangle_{\beta}$.
\end{lemma}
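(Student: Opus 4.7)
The plan is a standard \emph{two-directional comparison} argument, resting on the defining property of the greedy algorithm: if $\langle x \rangle_\beta = x_k \cdots x_0 \tec x_{-1} x_{-2}\cdots$, then for every index $j \leq k$ one has
\[
 \sum_{i = -\infty}^{j-1} x_i \beta^i < \beta^j .
\]
(This is exactly what ensures that the greedy algorithm never ``overshoots''.) In particular, taking $j = k+1$ gives $x < \beta^{k+1}$, while $x_k \geq 1$ gives $x \geq \beta^k$. I would begin the proof by recording this inequality, either citing the relevant passage of Chapter~7 of Lothaire or proving it in one line by induction on the greedy procedure.

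Next I would handle the forward direction in two cases. If $k < \ell$, then the bounds $x < \beta^{k+1} \leq \beta^\ell \leq y$ immediately yield $x < y$. If $k = \ell$ and $\langle x\rangle_\beta \prec_{\mathrm{lex}} \langle y\rangle_\beta$, let $i$ be the smallest index with $x_i \neq y_i$; then $y_i - x_i \geq 1$ and the preceding digits agree, so
\[
 y - x = (y_i - x_i)\beta^i + \sum_{j < i} (y_j - x_j)\beta^j \geq \beta^i - \sum_{j < i} x_j\beta^j > \beta^i - \beta^i = 0,
\]
using the greedy inequality applied to $\langle x\rangle_\beta$ at position $i$.

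For the converse I would argue by contraposition. If neither $k < \ell$ nor ($k = \ell$ and $\langle x\rangle_\beta \prec_{\mathrm{lex}} \langle y\rangle_\beta$) holds, then either $k > \ell$, or $k = \ell$ and $\langle x\rangle_\beta \succeq_{\mathrm{lex}} \langle y\rangle_\beta$. The first case gives $y < x$ by the argument above with the roles of $x$ and $y$ swapped; the second case either gives $y < x$ (strict lex order, same swapped argument) or $\langle x\rangle_\beta = \langle y\rangle_\beta$, which forces $x = y$ since a $\beta$-expansion determines its number. In any subcase $x < y$ fails, completing the equivalence.

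The main obstacle is really just the greedy inequality $\sum_{i<j} x_i\beta^i < \beta^j$; once that is in hand, the rest is a tidy rewrite. Since the paper cites Lothaire for the background on $\beta$-expansions, I expect a short appeal there is acceptable, but I would at minimum state the inequality explicitly at the top of the proof so that the reader sees where the strict comparison $y - x > 0$ comes from.
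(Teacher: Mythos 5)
The paper states this lemma without proof; it is presented as background material imported from Chapter~7 of Lothaire, so there is no in-paper argument to compare against. Your proof is correct and is the standard one: the greedy remainder bound $\sum_{i<j}x_i\beta^i<\beta^j$ (valid for all $j\leq k+1$) gives $\beta^k\leq x<\beta^{k+1}$, which settles the case $k<\ell$, and the same bound applied at the first position of disagreement gives the strict inequality $y-x>0$ in the case $k=\ell$; contraposition by symmetry then closes the equivalence. One wording nit: with the digits written $x_k\cdots x_0\tec x_{-1}\cdots$, the first disagreement read left to right is the \emph{largest} exponent $i$ with $x_i\neq y_i$, not the smallest; your displayed formula $y-x=(y_i-x_i)\beta^i+\sum_{j<i}(y_j-x_j)\beta^j$ makes clear that this is what you mean, but the phrase ``smallest index'' should be adjusted to match.
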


\begin{lemma}\label{lem:interval01}
Consider a real base $\beta > 1$ and a real number $x \geq 0$, $\langle x \rangle_{\beta} = x_k \cdots x_1x_0\tec  x_{-1}x_{-2} \cdots$. Then  $\langle \frac{x}{\beta} \rangle_{\beta} = x_k \cdots x_1\tec  x_0x_{-1}x_{-2} \cdots$.
\end{lemma}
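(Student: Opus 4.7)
The plan is to exhibit the claimed string $x_k \cdots x_1 \tec x_0 x_{-1} x_{-2} \cdots$ as a $\beta$-representation of $x/\beta$ and then verify that it is the one produced by the greedy algorithm, i.e.\ it equals $\langle x/\beta \rangle_{\beta}$. First I would divide the defining identity $x = \sum_{i \leq k} x_i \beta^i$ by $\beta$ and reindex by $j := i-1$, obtaining $x/\beta = \sum_{j \leq k-1} x_{j+1}\beta^j$. So the digit in position $j$ of the candidate representation is $x_{j+1}$, and since $x_k \neq 0$, its leading digit sits at position $k-1$, exactly as claimed.

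The remaining work is to check the greedy property. Because $\langle x \rangle_\beta$ is produced by the greedy algorithm, for every position $m \leq k$ one has
\begin{equation*}
\sum_{i \geq m} x_i \beta^i \;\leq\; x \;<\; \sum_{i > m} x_i \beta^i + (x_m+1)\beta^m,
\end{equation*}
the left inequality expressing the validity of the prefix already chosen, the right one its greedy maximality. Dividing both sides by $\beta$ and writing $j' := i-1$, $j := m-1$, yields
\begin{equation*}
\sum_{j' \geq j} x_{j'+1}\beta^{j'} \;\leq\; x/\beta \;<\; \sum_{j' > j} x_{j'+1}\beta^{j'} + (x_{j+1}+1)\beta^{j},
\end{equation*}
which is precisely the greedy condition at position $j$ for the candidate representation. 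Hence the candidate coincides with $\langle x/\beta \rangle_{\beta}$.

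I do not expect a substantial obstacle: the lemma formalises the intuitive fact that the greedy algorithm is scale-invariant, so division by $\beta$ merely shifts the radix point one place to the left. The only care needed is to state the greedy characterisation precisely and then verify the inequalities uniformly for all $m \leq k$, which covers, with no modification, the borderline case where $k \leq 0$ and the shift moves a former fractional digit into the role of the leading integer digit (or vice versa).
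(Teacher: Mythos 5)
The paper does not actually prove Lemma~\ref{lem:interval01}: it is quoted as a standard fact about $\beta$-expansions (the surrounding material is drawn from Lothaire, Chapter~7), so there is no in-paper argument to compare against. Your proof is correct and is the standard one: divide the representation by $\beta$, reindex, and observe that the greedy characterisation (each tail $x-\sum_{i\geq m}x_i\beta^i$ lies in $[0,\beta^m)$) is homogeneous under scaling by $\beta$. One point deserves a word of care: the tail inequalities at positions $m\leq k$ alone do not characterise the greedy expansion (e.g.\ for $\beta=\frac{1+\sqrt5}{2}$ the representation $0\tec 11$ of $1$ satisfies them at all positions $\leq -1$ yet is not greedy); you also need the condition at $m=k+1$, i.e.\ $x<\beta^{k+1}$, to pin down where the greedy algorithm starts. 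Your justification that the leading digit of $\langle x/\beta\rangle_\beta$ sits at position $k-1$ invokes only $x_k\neq 0$, which gives the lower bound $\beta^{k-1}\leq x/\beta$ but not the upper bound $x/\beta<\beta^k$. The fix is immediate --- $x<\beta^{k+1}$ is itself part of the greedy property of $\langle x\rangle_\beta$ and divides through by $\beta$ like everything else --- but it should be said explicitly rather than folded into ``the borderline cases work with no modification.''
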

Thanks to Lemma~\ref{lem:interval01}, it suffices to know the $\beta$-expansions of numbers in the interval $[0,1)$ to get the $\beta$-expansions of all real numbers.

\subsection{Rényi expansion of unity}

The $\beta$-expansion of numbers from the interval $[0,1)$ may be computed using the transformation $T_\beta:
[0,1]\rightarrow [0,1)$ defined as
\begin{equation}\label{T_beta}
T_{\beta}(x)=\{\beta x\}= \beta x - \lfloor \beta x \rfloor\,.
\end{equation}

For each $x \in [0,1)$, it holds that ${\langle x
\rangle}_{\beta}=0 \tec  x_{-1} x_{-2}\cdots$ if and only if
\begin{equation}\label{beta_transform}
x_{-i}=\lfloor \beta T_{\beta}^{i-1}(x) \rfloor\,.
\end{equation}
For $x=1$, the formula~(\ref{beta_transform}) does not provide the $\beta$-expansion of $1$ since ${\langle 1
\rangle}_{\beta}=1\tec $. Nevertheless, it gives us a useful tool,
the Rényi expansion of unity (defined in~\cite{Re1957}). 
\begin{definition}\label{def:Renyi_rozvoj1}
Let $\beta \in \mathbb R, \ \beta>1$. Then the {\em Rényi expansion of unity}
in the base $\beta$ is defined as
\begin{equation}\label{d_beta_1}
d_{\beta}(1)=t_1t_2t_3\cdots, \quad \mbox{where} \ t_i:=\lfloor
\beta T_{\beta}^{i-1}(1)\rfloor\,. 
\end{equation}

\end{definition}

Since $t_1=\lfloor \beta \rfloor$, we have $t_1\geq 1$. On the one hand, every number $\beta>1$ is uniquely given by its R\'enyi expansion of unity. On the other hand,  not every sequence of non-negative integers is equal to $d_{\beta}(1)$ for some $\beta$.
Parry solved this problem~\cite{Pa1960}: The sequence of numbers
$(t_i)_{i \geq 1}$, $t_i \in \mathbb N$, is the Rényi expansion of unity for some number $\beta>1$ if and only if it satisfies the lexicographic condition
\begin{equation}\label{ParryRenyi}
    t_jt_{j+1}t_{j+2}\cdots \prec_{\text{lex}} t_1t_2t_3\cdots \quad \mbox{for each $j > 1$\,.}
\end{equation}
In particular, it implies that the Rényi expansion of unity is never periodic. Parry moreover proved that the Rényi expansion of unity 
enables us to decide whether a~$\beta$-representation of a positive number $x$ is its $\beta$-expansion. For this purpose, we introduce the {\em infinite Rényi expansion of unity} (it is the lexicographically greatest infinite $\beta$-representation of unity).
\begin{equation}\label{infinite_exp}
d^{*}_{\beta}(1)= \left\{\begin{array}{ll} d_{\beta}(1) & \hbox{if}
\ d_{\beta}(1) \ \text{is infinite}\,,\\
(t_1 t_2\cdots t_{m-1}(t_m -1))^{\omega}
 & \hbox{if}
\ d_{\beta}(1)=t_1\cdots t_m\,, \ \text{where} \ t_m \not =
0\,.
\end{array}\right.
\end{equation}

\begin{proposition}[Parry condition] \label{Parry_expansions} Consider a real base $\beta > 1$ and a real number $x\geq 0$.
Let $d^{*}_{\beta}(1)$ be the infinite Rényi expansion of unity and let $x_k \cdots x_1x_0\tec  x_{-1}x_{-2} \cdots$ be a~$\beta$-representation of $x$. Then  $x_k \cdots x_1x_0\tec  x_{-1}x_{-2} \cdots$ is the $\beta$-expansion of $x$ if and only if
\begin{equation}\label{ParryCondition}
x_ix_{i-1}\cdots \prec_{\text{lex}} d^{*}_{\beta}(1) \quad \mbox{ for all }\ i\leq k\,.
\end{equation}
\end{proposition}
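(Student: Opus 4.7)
Both directions reduce to an auxiliary claim about fractional $\beta$-expansions of numbers in $[0,1)$ combined with the greedy algorithm. The claim has two halves: (a)~if $y\in [0,1)$ has $\beta$-expansion $0\tec y_1 y_2 \cdots$, then $y_1 y_2\cdots \prec_{\text{lex}} d^*_\beta(1)$; (b)~conversely, any digit sequence $y_1 y_2 \cdots$ with $y_1 y_2 \cdots \prec_{\text{lex}} d^*_\beta(1)$ represents (when placed after a radix point) a number in $[0,1)$ whose $\beta$-expansion is exactly $0\tec y_1 y_2\cdots$. Part (a) I would prove by contradiction: if $y_1 y_2\cdots \succeq d^*_\beta(1)$, then by Lemma~\ref{lem:radix_velikost} the represented number would be at least $1$, contradicting $y<1$.

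For the forward direction ($\Rightarrow$), fix $i\leq k$. By the greedy algorithm, the remainder $r_{i+1}:=x-\sum_{j>i}x_j\beta^j$ lies in $[0,\beta^{i+1})$, so $y:=r_{i+1}/\beta^{i+1}\in [0,1)$. Iterating Lemma~\ref{lem:interval01} yields $\langle y\rangle_\beta = 0\tec x_i x_{i-1}\cdots$, and part~(a) delivers the desired lex inequality. For the converse ($\Leftarrow$), assume~(\ref{ParryCondition}) and prove by downward induction on $j$ that $x_j$ agrees with the greedy digit at position $j$. Given agreement for $j>i$, let $r_{i+1}$ denote the remainder; the greedy digit is $\lfloor r_{i+1}/\beta^i\rfloor$. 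The lower bound $r_{i+1}\geq x_i\beta^i$ is immediate, and the matching upper bound $r_{i+1} < (x_i+1)\beta^i$ reduces to $\sum_{j<i} x_j \beta^{j-i} < 1$, which follows from the Parry condition at index $i-1$ via part~(b) of the auxiliary claim.

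The main obstacle will be the case where $d_\beta(1) = t_1 \cdots t_m$ is finite, so that $d^*_\beta(1) = (t_1\cdots t_{m-1}(t_m-1))^\omega$ is only eventually periodic and differs from $d_\beta(1)$. Here the lex-to-numerical translation used in part~(b) is delicate, since $1$ admits two distinct infinite $\beta$-representations (namely $1\tec$ and $0\tec t_1\cdots t_m$), and strict lex comparison with $d^*_\beta(1)$ is precisely what discriminates them. I would locate the first index of disagreement with $d^*_\beta(1)$ and sum the resulting geometric-type tail using the Parry lexicographic condition~(\ref{ParryRenyi}) imposed on the $t_i$, thereby deriving the required strict numerical inequality.
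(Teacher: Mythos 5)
The paper does not prove this proposition at all: it is quoted as Parry's classical theorem (the citation is to Parry's 1960 paper, with Lothaire, Chapter~7, as the general reference), so there is no internal proof to compare yours against and your sketch must stand on its own. Your overall architecture --- reduce to fractional parts via Lemma~\ref{lem:interval01}, characterize greedy expansions of numbers in $[0,1)$, and recover the converse by downward induction on the greedy digits --- is indeed the standard route.

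However, two steps are genuinely broken as written. First, part~(b) of your auxiliary claim is false as stated: the single comparison $y_1y_2\cdots \prec_{\text{lex}} d^*_\beta(1)$ guarantees neither that the represented value lies in $[0,1)$ nor that the digit string is greedy. For $\beta=\frac{1+\sqrt{5}}{2}$, where $d^*_\beta(1)=(10)^\omega$, the string $0N00\cdots$ with $N$ large is lexicographically below $(10)^\omega$ yet represents $N\beta^{-2}>1$; and $0110^\omega\prec_{\text{lex}}(10)^\omega$ represents $\beta^{-2}+\beta^{-3}=\beta^{-1}\in[0,1)$ whose greedy expansion is $0\tec 10^\omega$, not $0\tec 0110^\omega$. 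The correct hypothesis is that \emph{every} shifted tail $y_iy_{i+1}\cdots$ be lexicographically below $d^*_\beta(1)$ --- which is precisely why \eqref{ParryCondition} is imposed for all $i\le k$ rather than only $i=k$. Your use of (b) in the converse direction is salvageable, because the full condition at all indices $\le i-1$ is available there, but the lemma must be restated and reproved with the all-tails hypothesis. Second, the justification of part~(a) ``by Lemma~\ref{lem:radix_velikost}'' does not go through: that lemma compares two \emph{greedy} expansions, whereas $d^*_\beta(1)$ is not the $\beta$-expansion of any number when $d_\beta(1)$ is finite (the expansion of $1$ is $1\tec$), so lexicographic comparison against it is not covered by the lemma. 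What is actually needed for (a) is the numerical fact that any string $\succeq_{\text{lex}} d^*_\beta(1)$ has value at least $1$, which one proves by locating the first index of disagreement and bounding the tail $\sum_{i>n}t^*_i\beta^{n-i}\le 1$ --- essentially the argument you defer to your final paragraph for the ``delicate'' finite case, but which is required in both cases and in both directions. Until the auxiliary claim is corrected and that tail estimate is supplied, neither implication is established.
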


\begin{example}
For $\beta=\frac{1+\sqrt{5}}{2}$, the Rényi expansion of unity $d_\beta(1)=11$. Thus, $d^{*}_\beta(1)=(10)^{\omega}$. Applying the Parry condition, we can see that every sequence of coefficients in~$\{0,1\}$, which does not end in $(10)^{\omega}$ and does not contain the block $11$, is the $\beta$-expansion of a non-negative real number. 
\end{example}

\begin{definition}
A real number $\beta>1$, which has an eventually periodic Rényi expansion of unity, is called a~{\em Parry number}. If the expansion $d_\beta(1)$ is finite, then $\beta$ is a~{\em simple Parry number}. If the expansion $d_\beta(1)$ is infinite, then $\beta$ is a~{\em non-simple Parry number}.
\end{definition}

%%%%%%%%%%%%%%%%%%%%%%%%%%%%%%%%%%%%%%%%%%%%%%%%%%%%%%%%%%%%%%%%%%%%
\subsection{$\beta$-integers}
Consider a real number $\beta >1$. Real numbers $x$ whose $\beta$-expansion has a vanishing fractional part are called {\em $\beta$-integers} and their set is denoted by $\mathbb Z_\beta$. Formally written
$$
{\mathbb Z}_{\beta}:=\{x \in \mathbb R \ : \ {\langle |x| \rangle}
_{\beta}=x_k x_{k-1}\cdots x_0 \tec  \}\,.
$$

According to Lemma~\ref{lem:radix_velikost}, the lexicographic order of $\beta$-expansions corresponds to the order of numbers with respect to the size.  
Therefore, there exists an increasing sequence $(b_n)_{n
=0}^{\infty}$ such that 
\begin{equation}\label{b_n}
\{b_n \ : \ n \in \mathbb
N\}=\mathbb Z_\beta \cap [0,\infty)\,.
\end{equation}

Since $\mathbb Z_\beta=\mathbb Z$ for any integer $\beta>1$, the distance between consecutive elements of $\mathbb Z_{\beta}$ is always one. This situation radically changes if $\beta \notin \mathbb N$. In this case, the number of different distances between neighboring elements of $\mathbb Z_\beta$ is at least two. 
% The set $\mathbb Z_{\beta}$ se podobá $\mathbb Z$ jen částečně.
% \begin{enumerate}
% \item
% $\mathbb Z_{\beta}$ nemá hromadné body.
% \item
% $\mathbb Z_{\beta}$ je relativně hustá množina, tj. vzdálenost mezi po sobě jdoucími prvky $\mathbb Z_{\beta}$ je omezená.
% \item
% $\mathbb Z_{\beta}$ je soběpodobná, tj. $\beta \mathbb
% Z_{\beta}\subset \mathbb Z_{\beta}$.
% \item
% $\mathbb Z_{\beta}$ není invariantní na posun.
% \end{enumerate}

Thurston~\cite{Th1989} showed that the distances between neighbors in $\mathbb Z_{\beta}$ form the set 
$\mbox{$\{\Delta_{k} \ : \ k \in \mathbb N \}$}$, where
\begin{equation}\label{distance_beta}
    \Delta_{k}:=\sum_{i=1}^{\infty}\frac{t_{i+k}}{{\beta}^{i}} \quad \hbox{for} \ k \in \mathbb N\,.
\end{equation}
Obviously, the set $\{\Delta_{k}\ : \ k \in \mathbb N \}$ is finite if and only if $d_{\beta}(1)$ is an eventually periodic sequence, i.e., $\beta$ is a Parry number.
If the number of distances of consecutive elements in $\mathbb Z_\beta$ is finite, we may code the same distances with the same letters. In such a way, we obtain a sequence $\uu_\beta$ encoding $\mathbb Z_\beta \cap [0,\infty)$, as illustrated in Figure~\ref{tau}.

\begin{example} \label{example_u_beta}
Consider again $\beta=\frac{1+\sqrt{5}}{2}$, i.e., $d_\beta(1)=11$ and $d_\beta^*(1)=(10)^{\omega}$. According to the formula~(\ref{distance_beta}), we observe that the distances between neighboring $\beta$-integers attain two values: $\Delta_0=1$
and $\Delta_1=\frac{1}{\beta}$. 
When coding the distances $\Delta_0 \rightarrow 0$ and $\Delta_1 \rightarrow 1$, we get the famous Fibonacci sequence. A~prefix is written in Figure~\ref{tau}. 

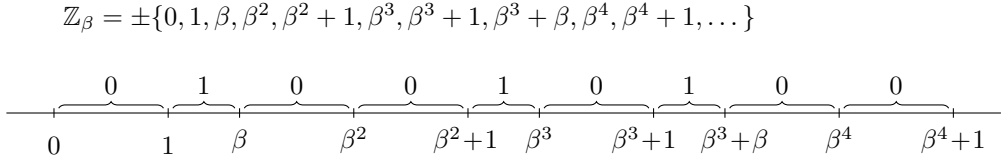
\begin{figure}[ht]
\centering
\begin{tikzpicture}[x=0.9pt,y=0.9pt]
\node[anchor=west] at (20,58) {$\mathbb Z_\beta = \pm\{0, 1, \beta, \beta^2, \beta^2+1, \beta^3, \beta^3+1, \beta^3+\beta, \beta^4, \beta^4+1, \dots\}$};
\draw (0,18) -- (418,18);
\foreach \zacatek/\konec/\popis in {
    22/66/0, 
    70/96/1, 
    100/144/0, 
    148/192/0, 
    196/222/1,
    226/270/0, 
    274/300/1, 
    304/348/0, 
    352/396/0
    }{
    \draw[decorate,decoration={brace}] (\zacatek,20) -- (\konec,20) node[midway, above, yshift=2pt] {$\popis$};
    }
\foreach \bod/\popis in {
    20/$0$, 
    68/$1$, 
    98/$\beta$, 
    146/$\beta^2$, 
    194/$\beta^2\!+\!1$, 
    224/$\beta^3$,
    272/$\beta^3\!+\!1$, 
    302/$\beta^3\!+\!\beta$, 
    350/$\beta^4$, 
    398/$\beta^4\!+\!1$
    }{
    \draw (\bod,16) -- (\bod,20);
    }
\foreach \bod/\popis in {
    20/$0$, 
    68/$1$, 
    98/$\beta$, 
    146/$\beta^2$, 
    194-1/$\beta^2\!+\!1$, 
    224/$\beta^3$,
    272-4/$\beta^3\!+\!1$, 
    302+4/$\beta^3\!+\!\beta$, 
    350/$\beta^4$, 
    398/$\beta^4\!+\!1$
    }{
    \node[anchor=south] at (\bod,-3) { \popis};
    }
\end{tikzpicture}
\caption{Illustration of coding of distances in $\mathbb Z_\beta$ for $\beta=\frac{1+\sqrt{5}}{2}$} \label{tau}
\end{figure}
\end{example}

\subsection{Morphisms and Parry numbers}\label{Parry_substitution} 
Fabre~\cite{Fa1995} noticed that the sequences $\uu_\beta$ coding non-negative $\beta$-integers for Parry bases $\beta$ are fixed points of morphisms.

More precisely, if $\beta$ is a~simple Parry number, i.e., $d_\beta(1) =t_1t_2\cdots
t_m$ for $m\in \mathbb N, m\geq 2$, then $\uu_\beta$ is the fixed point of the morphism $\varphi$ defined over the alphabet $\{0, 1, \dots,
m-1\}$ in the following way
\begin{equation}\label{subst1general}
\begin{array}{rcl}
\varphi(0)&=&0^{t_1}1\,, \\
\varphi(1)&=&0^{t_2}2\,, \\
&\vdots &\\
\varphi(m-2)&=&0^{t_{m-1}}(m-1)\,,\\
\varphi(m-1)&=&0^{t_m}\,.
\end{array}
\end{equation}
The sequence $\uu_\beta$ is called a~{\em simple Parry sequence}.

\vspace{0.3cm}
Similarly, let $\beta$ be a~non-simple Parry number, i.e., let $m,r\in \mathbb N, m\geq 1, r\geq 1$, be minimal such that $d_\beta(1)=t_1t_2\cdots t_m(t_{m+1}\cdots t_{m+r})^{\omega}$, then $\uu_\beta$ is the fixed point of the morphism $\varphi$ defined over the alphabet $\{0, 1, \dots, m+r-1\}$ as follows 
\begin{equation}\label{subst2general}
\begin{array}{rcl}
\varphi(0)&=&0^{t_1}1\,, \\
\varphi(1)&=&0^{t_2}2\,, \\
&\vdots& \\
\varphi(m-1)&=&0^{t_m}m\,, \\
&\vdots& \\
\varphi(m+r-2)&=&0^{t_{m+r-1}}(m+r-1)\,,\\
\varphi(m+r-1)&=&0^{t_{m+r}}m\,.
\end{array}
\end{equation}
The sequence $\uu_\beta$ is called in this case a~{\em non-simple Parry sequence}.

Let us recall an essential relation between a~$\beta$-integer $b_n$ and its coding by a~prefix of the associated infinite word $\uu_\beta$.
\begin{proposition}[Fabre~\cite{Fa1995}]\label{prop: Fabre}
Let $\uu_\beta$ be the sequence associated with a~Parry number $\beta$ and let $\varphi$ be the
associated morphism. Then for every $\beta$-integer $b_n \in \mathbb Z_\beta \cap [0,\infty)$
it holds that $\langle b_n \rangle_\beta = x_{k-1}x_{k-2}\cdots x_1 x_0\tec $ if and
only if $\varphi^{k-1}(0^{x_{k-1}})\varphi^{k-2}(0^{x_{k-2}})\cdots \varphi(0^{x_1})0^{x_0}$ is a~prefix of $\uu_\beta$ of length $n$.
\end{proposition}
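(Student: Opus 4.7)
The plan is to prove both implications simultaneously by induction on $k$, the number of digits of the expansion. The engine of the argument is a single \emph{key lemma} that I would establish up front: if $w$ is the prefix of $\uu_\beta$ of length $n$ coding the consecutive distances between the first $n+1$ non-negative $\beta$-integers $0 = b_0 < b_1 < \cdots < b_n$, then $\varphi(w)$ is the prefix of $\uu_\beta$ coding the $\beta$-integers in $[0, \beta b_n]$. Equivalently, on the level of these coding prefixes, applying $\varphi$ corresponds exactly to multiplying the underlying $\beta$-integer by $\beta$, which on the expansion side is simply the appending of a trailing zero.

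To prove the key lemma, I would first extract from (\ref{distance_beta}) the one-step recurrence $\beta\Delta_\ell = t_{\ell+1} + \Delta_{\ell+1}$ (with the obvious convention $\Delta_m = 0$ in the simple case and $\Delta_{m+r} = \Delta_m$ in the non-simple case). Thus a gap of length $\Delta_\ell$ is dilated by $\beta$ into $t_{\ell+1}$ unit gaps followed by one gap of length $\Delta_{\ell+1}$, which is exactly the sequence of distances coded by the word $\varphi(\ell) = 0^{t_{\ell+1}}(\ell+1)$ from (\ref{subst1general})–(\ref{subst2general}); the boundary cases $\ell = m-1$ in (\ref{subst1general}) and $\ell = m+r-1$ in (\ref{subst2general}) are handled by the same computation. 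One must also check that the new intermediate points really are $\beta$-integers, and this is where the Parry condition (Proposition~\ref{Parry_expansions}) enters: the candidate $\beta$-representations obtained at those intermediate points satisfy the lexicographic inequality against $d^{*}_\beta(1)$, so they are indeed the $\beta$-expansions of the points in question. Putting this together letter by letter shows that $\varphi$ sends coding prefixes to coding prefixes in the claimed way.

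With the lemma in hand, the induction runs smoothly. For the base case $k=1$, if $\langle b_n\rangle_\beta = x_0 \tec$ with $0 < x_0 \le t_1$, the first $t_1$ distances in $\mathbb Z_\beta \cap [0,\infty)$ are all equal to $\Delta_0 = 1$, so the prefix of length $n = x_0$ is $0^{x_0}$, which matches the formula. For the inductive step, given $\langle b_n\rangle_\beta = x_{k-1}\cdots x_1 x_0 \tec$ with $x_{k-1}\ne 0$, consider the $\beta$-integer $b'$ with expansion $x_{k-1}\cdots x_1 \tec$. The induction hypothesis gives the prefix of $\uu_\beta$ coding $b'$ as $\varphi^{k-2}(0^{x_{k-1}})\cdots\varphi(0^{x_2})0^{x_1}$; applying $\varphi$ and the key lemma yields the prefix coding $\beta b'$, namely $\varphi^{k-1}(0^{x_{k-1}})\cdots\varphi^{2}(0^{x_2})\varphi(0^{x_1})$. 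Finally, the Parry condition guarantees that the numbers $\beta b' + 1, \beta b' + 2, \ldots, \beta b' + x_0 = b_n$ are consecutive $\beta$-integers with expansions $x_{k-1}\cdots x_1 j\tec$ for $1\le j\le x_0$, so the remaining $x_0$ gaps each have length $\Delta_0 = 1$ and are coded by the letter $0$; appending $0^{x_0}$ completes the prefix. The converse implication is automatic, since the map from initial segments of $\mathbb Z_\beta\cap[0,\infty)$ to prefixes of $\uu_\beta$ is clearly a bijection.

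The main technical obstacle is the key lemma, specifically the verification that every intermediate point produced when dilating a gap $\Delta_\ell$ by $\beta$ is again a $\beta$-integer. This is precisely where the lexicographic Parry condition does the heavy lifting, and the argument must be carried out uniformly across the two morphism shapes (\ref{subst1general}) and (\ref{subst2general}).
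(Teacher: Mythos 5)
The paper does not prove this proposition: it is recalled verbatim from Fabre~\cite{Fa1995} and used as a black box, so there is no in-paper argument to compare yours against. Judged on its own, your strategy is the standard one for this result and is sound in outline: the recurrence $\beta\Delta_\ell=t_{\ell+1}+\Delta_{\ell+1}$ (which you state correctly, with the right conventions $\Delta_m=0$, resp.\ $\Delta_{m+r}=\Delta_m$) is exactly what makes the morphisms~\eqref{subst1general} and~\eqref{subst2general} the ``derivative'' of multiplication by $\beta$ on $\mathbb Z_\beta\cap[0,\infty)$, and the induction on the number of digits then goes through as you describe.

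Two points in your sketch deserve more care. First, the induction has to carry along more than the coding prefix: to append $0^{x_0}$ in the inductive step you need $x_0\le t_{\ell+1}$, where $\Delta_\ell$ is the gap immediately following $b'$, so the induction hypothesis must also identify the \emph{letter} $\ell$ written after the prefix coding $[0,b']$ with data read off from the expansion of $b'$ (this is Thurston's description of which gap follows which $\beta$-integer). That identification is precisely what links the admissibility of $x_{k-1}\cdots x_1x_0$ under Proposition~\ref{Parry_expansions} to the inequality $x_0\le t_{\ell+1}$, and it is the same bookkeeping you need inside the key lemma to show the intermediate points $\beta b+1,\dots,\beta b+t_{\ell+1}$ are $\beta$-integers and that no others intrude. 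Second, the ``only if'' direction is not automatic for arbitrary digit strings: in the Fibonacci case $\varphi(0^1)0^1=010$ is a prefix of length $3$ while $\langle b_3\rangle_\beta=100\tec\ne 11\tec$. The equivalence holds only for strings satisfying the Parry condition, and under that (implicit) hypothesis your bijection argument does close the converse; you should say so explicitly rather than calling it automatic.
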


\section{Attractors of prefixes of simple Parry sequences}\label{sec:simpleParry}

Gheeraert, Romana, and Stipulanti~\cite{GhRoSt2024} described for simple Parry sequences~\footnote{They worked with more general sequences -- fixed points of morphisms from~\eqref{subst1general} with non-negative integer coefficients $t_1,\dots, t_m$ and $t_1, t_m\geq1$.} attractors of prefixes whose size was by one larger than the alphabet size, see Theorem~\ref{thm:puvodni}. They conjectured that attractors of alphabet size exist.
The aim of this section is to prove their conjecture. 

They also asked under which condition the attractors of prefixes form a~subset of $\{|\varphi^n(0)|-1\ : \ n \in \mathbb N\}$. We partially answer this question, too.

Let us recall the definition of simple Parry sequences in the form of fixed points of morphisms, the assumptions on parameters follow from the properties of the Rényi expansion of unity~\eqref{ParryRenyi}.
\begin{definition}\label{def:simple-Parry} Let $m\in \mathbb N, m \geq 2$. A \emph{simple Parry sequence} $\uu$ is a~fixed point of the morphism $\varphi: \{0,1,\dots, m-1\}^*\to \{0,1,\dots, m-1\}^*$ defined as
$$\begin{array}{rcl}
\varphi(0)&=&0^{t_1}1\,, \\
\varphi(1)&=&0^{t_2}2\,, \\
&\vdots& \\
\varphi(m-2)&=&0^{t_{m-1}}(m-1)\,, \\
\varphi(m-1)&=&0^{t_m}\,,
\end{array}$$
where $t_1, t_2, \ldots, t_m \in \mathbb{N}$, $t_1 \geq 1$, $t_m \geq 1$, and moreover
\begin{equation*}
    t_it_{i+1} \cdots t_m0^{\omega} \prec_{\text{lex}} t_1t_2 \cdots t_m0^{\omega} \quad \text{for each $i \in \{2, \ldots, m\}$}\,.
\end{equation*}
\end{definition}

We will denote $u_n=\varphi^n(0)$ and $U_n=|u_n|$ for $n \in \mathbb N$.
We set $u_n=\varepsilon$ and $U_n=0$ for $n<0$.

Clearly, $u_{n+1} = \varphi(u_n)$ and $u_n$ is a prefix of $u_{n+1}$.
\begin{remark}
For $m=2$, it is known that $\uu$ is Sturmian if and only if $t_2=1$.
Arnoux-Rauzy sequences among simple Parry sequences are exactly the ones with $t_1=t_2=\dots=t_{m-1}$ and $t_m=1$.
Attractors of prefixes of Sturmian sequences~\cite{Mantaci2021} and Arnoux-Rauzy sequences~\cite{Dv2023} are known.
\end{remark}

\begin{example}\label{pr: SP_priklad}
For $m=3$ and $t_1=2$, $t_2=t_3=1$, 
the morphism takes on the following form

\begin{equation*}
\begin{array}{rclcl}
\varphi(0) &=& 001\,, \\
\varphi(1) &=& 02\,, \\
\varphi(2) &=& 0\,,
\end{array}
\end{equation*}
a~few first prefixes $u_n$ of $\uu$ look as follows
\begin{equation*}
\begin{array}{lcl}
u_0 &=& 0\,, \\
u_1 &=& 001\,, \\
u_2 &=& 00100102\,, \\
u_3 &=& 00100102001001020010\,, \\
u_4 &=& 001001020010010200100010010200100102001000100102001\,, \\
u_5 &=& 00100102001001020010001001020010010200100010010200100100102001001020010001\\
&&00102001001020010001001020010010010200100102001000100102\,.
\end{array}
\end{equation*}
% Pro $m=3$ a $p=q=r=1$ je morfismus definovaný jako
% \begin{equation*}
% \begin{array}{rcl}
% \varphi(0) &=& 01, \\
% \varphi(1) &=& 02, \\
% \varphi(2) &=& 0.
% \end{array}
% \end{equation*}
% Jeho pevným bodem je známé tribonacciho slovo (zobecnění Fibonacciho slova na ternární abecedu).
% Prefixy $\uu$ vypadají následujícím způsobem
% \begin{equation*}
% \begin{array}{rcl}
% u_0 &=& 0, \\
% u_1 &=& 01, \\
% u_2 &=& 0102, \\
% u_3 &=& 0102010, \\
% u_4 &=& 0102010010201, \\
% u_5 &=& 010201001020101020100102, \\
% &\vdots&
% \end{array}
% \end{equation*}

For $m=4$ and $t_1=2$, $t_2=1$, $t_3=2$ and $t_4=1$, the morphism is defined as

\begin{equation*}
\begin{array}{rclcl}
\varphi(0) &=& 001\,, \\
\varphi(1) &=& 02\,, \\
\varphi(2) &=& 003\,,\\
\varphi(3) &=& 0\,,
\end{array}
\end{equation*}
and the shortest prefixes $u_n$ of $\uu$ are
\begin{equation*}
\begin{array}{rcl}
u_0 &=& 0\,, \\
u_1 &=& 001\,, \\
u_2 &=& 00100102\,, \\
u_3 &=& 0010010200100102001003\,, \\
u_4 &=& 00100102001001020010030010010200100102001003001001020010010\,.
\end{array}
\end{equation*}
\end{example}

We start with several handy lemmas. Lemma~\ref{lem: SP_rekurze}, resp. Lemma~\ref{lem: SP_mocnina} can be found in~\cite{GhRoSt2024} as Proposition~4, resp. Theorem~22. We add the proof of Lemma~\ref{lem: SP_mocnina} since it was proved there using a more general setting. 
\begin{lemma}\label{lem: SP_rekurze}
For each $n \in \mathbb{N}$, $1\leq n \leq m-1$, it holds
\begin{equation*}
    u_n = u_{n-1}^{t_1}u_{n-2}^{t_2} \cdots u_0^{t_n}n\,.
\end{equation*}
For each $n \in \mathbb{N}$, $n \geq m$, it holds
\begin{equation*}
    u_n = u_{n-1}^{t_1}u_{n-2}^{t_2} \cdots u_{n-m}^{t_m}\,.
\end{equation*}
\end{lemma}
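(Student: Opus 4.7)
The plan is a straightforward induction on $n$, using the two key facts $u_{n+1}=\varphi(u_n)$ and the morphism property $\varphi(xy)=\varphi(x)\varphi(y)$, together with $u_0=0$ and the explicit definition of $\varphi$ from Definition~\ref{def:simple-Parry}. The base case $n=1$ is immediate: $u_1=\varphi(0)=0^{t_1}\cdot 1 = u_0^{t_1}\cdot 1$, matching the first formula.

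For the inductive step in the range $1\leq n\leq m-2$, I would apply $\varphi$ to the assumed identity $u_n=u_{n-1}^{t_1}u_{n-2}^{t_2}\cdots u_0^{t_n}\,n$; distributing $\varphi$, replacing each $\varphi(u_k)$ by $u_{k+1}$, and using $\varphi(n)=0^{t_{n+1}}(n+1)$ yields
\begin{equation*}
u_{n+1} = u_n^{t_1}u_{n-1}^{t_2}\cdots u_1^{t_n}\, 0^{t_{n+1}}(n+1).
\end{equation*}
Since $u_0=0$, the trailing $0^{t_{n+1}}$ is just $u_0^{t_{n+1}}$, so this is the first formula at index $n+1$, confirming the induction as long as $n+1\leq m-1$.

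The transition to the second formula occurs at $n=m-1\to m$. Applying $\varphi$ to $u_{m-1}=u_{m-2}^{t_1}\cdots u_0^{t_{m-1}}(m-1)$, the crucial point is that $\varphi(m-1)=0^{t_m}$ introduces \emph{no} new letter, so the trailing symbol is absorbed into a power of $u_0$:
\begin{equation*}
u_m = u_{m-1}^{t_1}u_{m-2}^{t_2}\cdots u_1^{t_{m-1}} u_0^{t_m},
\end{equation*}
which is precisely the second formula at $n=m$. From here the induction for $n\geq m$ is routine: applying $\varphi$ to $u_n=u_{n-1}^{t_1}\cdots u_{n-m}^{t_m}$ immediately gives $u_{n+1}=u_n^{t_1}\cdots u_{n-m+1}^{t_m}$.

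The main (and only) subtle point is the switch-over at $n=m-1$: one has to notice that the dangling single-letter suffix $(m-1)$ from the first formula is what, under $\varphi$, produces the missing $u_0^{t_m}$ factor needed to pass to the second formula. Everything else is formal manipulation, so I do not expect any real obstacle beyond bookkeeping of indices at this junction.
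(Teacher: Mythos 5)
Your induction is correct and complete; the base case, the propagation of the first formula for $1\le n\le m-2$, the switch-over at $n=m-1$ via $\varphi(m-1)=0^{t_m}=u_0^{t_m}$, and the routine step for $n\ge m$ all check out. The paper itself gives no proof of this lemma, citing it as Proposition~4 of the reference \cite{GhRoSt2024}; your argument is the standard one that the cited source uses, so there is nothing to compare beyond noting agreement.
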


% \begin{proof}
% Let us proceed by mathematical induction.

% For $n = 1$ we have $u_1=u_0^{t_1}1$.

% Assume the statement holds for $n < m-1$, i.e., $u_n = u_{n-1}^{t_1}u_{n-2}^{t_2} \cdots u_0^{t_n}n\,.$ Then using induction assumption (IA)
% \begin{align*}
%     u_{n+1} &= \varphi(u_n) \overset{\text{IA}}{=} \varphi(u_{n-1}^{t_1}u_{n-2}^{t_2} \cdots u_0^{t_n}n) =u_n^{t_1}u_{n-1}^{t_2} \cdots u_1^{t_n}u_0^{t_{n+1}}(n+1)\,.
% \end{align*}

% For $n = m$ we have 
% \begin{align*}
%     u_m &= \varphi(u_{m-1}) = \varphi(u_{m-2}^{t_1}u_{m-3}^{t_2} \cdots u_0^{t_{m-1}}(m-1)) = u_{m-1}^{t_1}u_{m-2}^{t_2} \cdots u_1^{t_{m-1}}u_0^{t_m}\,.
% \end{align*}

% Assume the statement holds for $n \geq m$, i.e., $u_n = u_{n-1}^{t_1}u_{n-2}^{t_2} \cdots u_{n-m}^{t_m}\,.$
% Then we get
% \begin{align*}
%     u_{n+1} &= \varphi(u_n) \overset{\text{IA}}{=} \varphi(u_{n-1}^{t_1}u_{n-2}^{t_2} \cdots u_{n-m}^{t_m}) =u_n^{t_1}u_{n-1}^{t_2} \cdots u_{n-m+1}^{t_m}\,.
% \end{align*}
% \end{proof}

\begin{example}
Let us illustrate Lemma~\ref{lem: SP_rekurze} on the prefixes from Example~\ref{pr: SP_priklad}, where $m=3$ and $t_1=2$, $t_2=t_3=1$. The prefixes of $\uu$ satisfy
\begin{equation*}
\begin{array}{lcl}
u_0 &=& 0\,, \\
u_1 &=& \underbrace{0}_{u_0}\underbrace{0}_{u_0}1 = u_0^21\,, \\
u_2 &=& \underbrace{001}_{u_1}\underbrace{001}_{u_1}\underbrace{0}_{u_0}2 = u_1^2u_02\,, \\
u_3 &=& \underbrace{00100102}_{u_2}\underbrace{00100102}_{u_2}\underbrace{001}_{u_1}\underbrace{0}_{u_0} = u_2^2u_1u_0\,, \\
u_4 &=& \underbrace{00100102001001020010}_{u_3}\underbrace{00100102001001020010}_{u_3}\underbrace{00100102}_{u_2}\underbrace{001}_{u_1} = u_3^2u_2u_1\,, \\
u_5 &=& \underbrace{001001020010010200100010010200100102001000100102001}_{u_4}\\
&&\underbrace{001001020010010200100010010200100102001000100102001}_{u_4}\\
&&\underbrace{00100102001001020010}_{u_3}\underbrace{00100102}_{u_2} = u_4^2u_3u_2\,. \\

\end{array}
\end{equation*}

\end{example}

\begin{lemma}\label{lem: SP_prefixy}
For each $n \in \mathbb{N}$, $n\geq 1$, 
\begin{equation*}
    u_{n-1}^{k_1}u_{n-2}^{k_2} \cdots u_0^{k_n} \quad \text{is a prefix of $u_n$}
\end{equation*}
if $k_1, k_2, \ldots, k_n \in \mathbb{N}$ satisfy $k_ik_{i+1} \cdots k_n0^{\omega} \prec_{\text{lex}} t_1t_2 \cdots t_m0^{\omega}$ for all $i \in \{1, \ldots, n\}$. 
% Moreover, for each $n \in \mathbb{N}$, $n \leq m-1$, 
% \begin{equation*}
%     u_{n-1}^{k_1}u_{n-2}^{k_2} \cdots u_0^{k_n} \quad \text{is a prefix of $u_n$}
% \end{equation*}
% if $k_1, k_2, \ldots, k_n \in \mathbb{N}$ satisfy $k_ik_{i+1} \cdots k_n0^{\omega} \prec_{\text{lex}} t_1t_2 \cdots t_m0^{\omega}$ for all $i \in \{1, \ldots, n\}$. \\
% For each $n \in \mathbb{N}$, $n \geq m$, 
% \begin{equation*}
%     u_{n-1}^{k_1}u_{n-2}^{k_2} \cdots u_{n-m}^{k_m} \quad \text{is a prefix of $u_n$}
% \end{equation*}
% if $k_1, k_2, \ldots, k_m \in \mathbb{N}$ satisfy$ is a prefix of $\uu$. 
% $k_ik_{i+1} \cdots k_m0^{\omega} \prec_{\text{lex}} t_1t_2 \cdots t_m0^{\omega}$ for all $i \in \{1, \ldots, m\}$. 
\end{lemma}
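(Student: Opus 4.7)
The plan is to prove the lemma by induction on $n$, with a case analysis in the inductive step driven by the earliest index at which the sequences $(k_i)$ and $(t_i)$ disagree. The base case $n=1$ reduces to showing that $u_0^{k_1} = 0^{k_1}$ is a prefix of $u_1 = 0^{t_1}1$, which follows from $k_1 \leq t_1$; this inequality is a direct consequence of $k_1 0^{\omega} \prec_{\text{lex}} t_1 t_2 \cdots t_m 0^{\omega}$ together with $t_m \geq 1$.

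For the inductive step, let $j$ be the smallest index with $k_j < t_j$, adopting the conventions $k_i = 0$ for $i > n$ and $t_i = 0$ for $i > m$. Since $k_j < t_j$ forces $t_j \geq 1$, we obtain $j \leq m$. Two subcases then arise. If $j \leq n$, the subsequence $(k_{j+1},\ldots,k_n)$ inherits all the suffix lex conditions and has length $n-j<n$, so the induction hypothesis (understanding the empty product as a prefix of $u_0$ when $j=n$) yields that $u_{n-j-1}^{k_{j+1}} \cdots u_0^{k_n}$ is a prefix of $u_{n-j}$. Hence
\[ u_{n-j}^{k_j} \cdot u_{n-j-1}^{k_{j+1}} \cdots u_0^{k_n} \]
is a prefix of $u_{n-j}^{k_j+1}$, and in turn of $u_{n-j}^{t_j}$ because $k_j + 1 \leq t_j$. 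Prepending the matching initial blocks $u_{n-1}^{t_1} \cdots u_{n-j+1}^{t_{j-1}}$ (which equal $u_{n-1}^{k_1}\cdots u_{n-j+1}^{k_{j-1}}$ by minimality of $j$) and invoking Lemma~\ref{lem: SP_rekurze}, which ensures that $u_{n-1}^{t_1} u_{n-2}^{t_2} \cdots u_{n-j}^{t_j}$ is a prefix of $u_n$ since $j \leq \min(m,n)$, I conclude that $u_{n-1}^{k_1} \cdots u_0^{k_n}$ is a prefix of $u_n$. If instead $j = n+1$, then $(k_1,\ldots,k_n) = (t_1,\ldots,t_n)$ with $n < m$, and Lemma~\ref{lem: SP_rekurze} gives $u_n = u_{n-1}^{t_1} \cdots u_0^{t_n} n$, so the product is the prefix of $u_n$ with its final letter $n$ removed.

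The main obstacle is not combinatorial but rather the bookkeeping at the boundary configurations: the empty tail when $j = n$, the fully saturated case $j = n+1$ which forces $n < m$, and the two distinct shapes of $u_n$ in Lemma~\ref{lem: SP_rekurze} according to whether $n < m$ or $n \geq m$. In each of these situations, the explicit decomposition of $u_n$ supplied by Lemma~\ref{lem: SP_rekurze} matches exactly what the concatenation produces, so no extra argument is required beyond verifying that $j \leq \min(m,n)$ in the generic case and that $n < m$ in the saturated one.
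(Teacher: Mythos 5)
Your proof is correct, but it takes a genuinely different route from the paper's. The paper does not argue combinatorially at all: it observes that the lexicographic hypothesis is precisely the Parry condition of Proposition~\ref{Parry_expansions}, so that $k_1k_2\cdots k_n\tec$ is the $\beta$-expansion of some $\beta$-integer $b_N$; Fabre's Proposition~\ref{prop: Fabre} then immediately gives that $u_{n-1}^{k_1}\cdots u_0^{k_n}$ is a prefix of $\uu$, and Lemma~\ref{lem:radix_velikost} (comparing $k_1\cdots k_n$ with $10^n$, which codes the prefix $u_n$) confines it inside $u_n$. Your argument instead is a self-contained strong induction on $n$ that uses only the recursion of Lemma~\ref{lem: SP_rekurze}: you locate the first index $j$ where $(k_i)$ drops below $(t_i)$, apply the induction hypothesis to the tail $(k_{j+1},\dots,k_n)$, absorb the result into $u_{n-j}^{k_j+1}\preceq u_{n-j}^{t_j}$, and reattach the saturated initial blocks. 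What the paper's route buys is brevity, at the cost of importing the identification of $\uu$ with $\uu_\beta$ and the external results of Parry and Fabre; what your route buys is an elementary proof valid directly from Definition~\ref{def:simple-Parry}, at the cost of the boundary bookkeeping you describe. One cosmetic slip: your second subcase should read $j>n$ rather than $j=n+1$, since with your conventions $j$ can exceed $n+1$ (e.g.\ when $k_i=t_i$ for all $i\leq n$ and $t_{n+1}=0<t_{n+2}$); this is harmless because the argument there uses only that $(k_1,\dots,k_n)=(t_1,\dots,t_n)$ and $n<m$, both of which follow from $j>n$ together with $j\leq m$.
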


\begin{proof}
Using the Parry condition from Proposition~\ref{Parry_expansions} for $d^*_\beta(1)=(t_1t_2\cdots t_{m-1}(t_m-1))^{\omega}$, there is a~$\beta$-integer $b_N$ with $\langle b_N\rangle_\beta=k_1 k_2 \cdots k_n\tec$.

Then, applying Proposition~\ref{prop: Fabre}, the word $u_{n-1}^{k_1}u_{n-2}^{k_2} \cdots u_0^{k_n}$ is a prefix of $\uu$. 
Finally, by Lemma~\ref{lem:radix_velikost}, since $k_1 k_2 \cdots k_n$ is shorter than $10^n$, the word $u_{n-1}^{k_1}u_{n-2}^{k_2} \cdots u_0^{k_n}$ is a prefix of $u_n$. 
\end{proof}

\begin{example}
Let us illustrate Lemma~\ref{lem: SP_prefixy} on the prefixes of $\uu$ from Example~\ref{pr: SP_priklad}, where $m=3$ and $t_1=2$, $t_2=t_3=1$. 
For instance for $k_1 = 1$, $k_2 = 2$, $k_3 = 1$, $k_4=0$ and $k_5=1$, the prefix $u_5$ of $\uu$ looks as follows
\begin{equation*}
\begin{array}{lcl}
u_5 &=& \underbrace{001001020010010200100010010200100102001000100102001}_{u_4}\\
&&\underbrace{00100102001001020010}_{u_3}\underbrace{00100102001001020010}_{u_3}\underbrace{00100102}_{u_2}\underbrace{0}_{u_0}\\
&&010010010200100102001000100102\\
&=&u_4u_3^2u_2u_0010010010200100102001000100102\,.
\end{array}
\end{equation*}

\end{example}

\begin{lemma}\label{lem: SP_mocnina}
The word $u_{n+1}$ without the last letter is a power of $u_n$ for all $n \in \mathbb{N}$. Moreover, the word $u_{n+1}$ is a power of $u_n$ for all $n \in \mathbb{N}$, $n \geq m-1$.
\end{lemma}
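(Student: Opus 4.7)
The plan is to split into the two regimes separated by $n=m-1$, use the appropriate recurrence from Lemma~\ref{lem: SP_rekurze} in each, and then reduce the power claim to a prefix claim handled by Lemma~\ref{lem: SP_prefixy}. In all cases, once we peel off the leading block $u_n^{t_1}$, what remains must be shown to be a prefix of $u_n$; verifying the lexicographic hypothesis of Lemma~\ref{lem: SP_prefixy} boils down to the Parry-type inequality in Definition~\ref{def:simple-Parry}.

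First I would handle the range $0 \leq n \leq m-2$. By Lemma~\ref{lem: SP_rekurze},
\begin{equation*}
u_{n+1} = u_n^{t_1}\,u_{n-1}^{t_2}\cdots u_0^{t_{n+1}}(n+1),
\end{equation*}
so stripping the last letter gives $u_n^{t_1}\,(u_{n-1}^{t_2}\cdots u_0^{t_{n+1}})$. I would apply Lemma~\ref{lem: SP_prefixy} with $k_j = t_{j+1}$ for $j=1,\dots,n$ to conclude that $u_{n-1}^{t_2}\cdots u_0^{t_{n+1}}$ is a prefix of $u_n$. The required conditions $t_{i+1}\cdots t_{n+1}0^\omega \prec_{\text{lex}} t_1 t_2\cdots t_m 0^\omega$ for $i\in\{1,\dots,n\}$ follow from $t_{i+1}\cdots t_{n+1}0^\omega \preceq_{\text{lex}} t_{i+1}\cdots t_m 0^\omega \prec_{\text{lex}} t_1\cdots t_m 0^\omega$, where the last inequality is the defining Parry condition of a simple Parry sequence. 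The base case $n=0$ is immediate: $u_1 = 0^{t_1}1 = u_0^{t_1}\,1$, and removing the last letter leaves $u_0^{t_1}$, a power of $u_0$.

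Next I would treat $n \geq m-1$, where Lemma~\ref{lem: SP_rekurze} gives
\begin{equation*}
u_{n+1} = u_n^{t_1}\,u_{n-1}^{t_2}\cdots u_{n+1-m}^{t_m}
\end{equation*}
with no terminal letter to strip. To obtain the stronger conclusion that $u_{n+1}$ itself is a power of $u_n$, it again suffices to prove $u_{n-1}^{t_2}\cdots u_{n+1-m}^{t_m}$ is a prefix of $u_n$. For this I apply Lemma~\ref{lem: SP_prefixy} with $k_j = t_{j+1}$ for $j=1,\dots,m-1$ and $k_j = 0$ for $j = m,\dots,n$ (the zero tail contributes no letters, so the resulting prefix is exactly $u_{n-1}^{t_2}\cdots u_{n+1-m}^{t_m}$). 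The hypothesis of Lemma~\ref{lem: SP_prefixy} splits accordingly: for $i \leq m-1$ it is the Parry condition of Definition~\ref{def:simple-Parry}, while for $i \geq m$ it reduces to $0^\omega \prec_{\text{lex}} t_1\cdots t_m 0^\omega$, valid since $t_1 \geq 1$. Since a power of $u_n$ trivially remains a power of $u_n$ after erasing its last letter, the first assertion in the lemma is also established in this regime.

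The main obstacle is not any single hard step but the bookkeeping across the boundary $n=m-1$: one must ensure that the index ranges in the two recurrences of Lemma~\ref{lem: SP_rekurze} are invoked correctly, and that the padding of the tuple $(k_1,\dots,k_n)$ by zeros in the second case does not silently introduce conditions outside the scope of the Parry inequality. Once this is set up cleanly, both assertions fall out simultaneously from a single invocation of Lemma~\ref{lem: SP_prefixy} in each regime.
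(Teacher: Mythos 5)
Your proof is correct and follows essentially the same route as the paper's: split at $n=m-1$, apply the appropriate recurrence from Lemma~\ref{lem: SP_rekurze}, and reduce to showing the tail after $u_n^{t_1}$ is a prefix of $u_n$ via Lemma~\ref{lem: SP_prefixy} and the Parry inequality. Your explicit zero-padding of the exponent tuple in the regime $n\geq m-1$ is a detail the paper leaves implicit, but the argument is the same.
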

\begin{proof}
For $n = 0$, we have $u_1 = u_0^{t_1}1$.
For each $n \in \mathbb{N}$, $1\leq n < m-1$, Lemma~\ref{lem: SP_rekurze} implies that
\begin{equation*}
    u_{n+1} = u_n^{t_1}u_{n-1}^{t_2}u_{n-2}^{t_3} \cdots u_0^{t_{n+1}}(n+1) = u_n^{t_1}u_n'(n+1)\,,
\end{equation*}
where $u_n' = u_{n-1}^{t_2}u_{n-2}^{t_3} \cdots u_0^{t_{n+1}}$ is a~prefix of $u_n$ by Lemma~\ref{lem: SP_prefixy} since $t_{i}t_{i+1} \cdots t_{n+1}0^{\omega} \prec_{\text{lex}} t_1t_2 \cdots t_m0^{\omega}$ for all $i \in \{2, \ldots, n+1\}$. Thus $u_{n+1}$ without the last letter is a power of $u_n$.

For each $n \in \mathbb{N}$, $n \geq m-1$, it holds using Lemma~\ref{lem: SP_rekurze}
\begin{equation*}
    u_{n+1} = u_n^{t_1}u_{n-1}^{t_2}u_{n-2}^{t_3} \cdots u_{n-m+1}^{t_m} = u_n^{t_1}u_n'\,,
\end{equation*}
where $u_n' = u_{n-1}^{t_2}u_{n-2}^{t_3} \cdots u_{n-m+1}^{t_m}$ is a~prefix of $u_n$ by Lemma~\ref{lem: SP_prefixy} since $t_it_{i+1} \cdots t_m0^{\omega} \prec_{\text{lex}} t_1t_2 \cdots  t_m0^{\omega}$ for all $i \in \{2, \ldots, m\}$. Consequently, $u_{n+1}$ is a power of $u_n$.
\end{proof}

\begin{example}
Let us illustrate Lemma~\ref{lem: SP_mocnina} again on the prefixes of $\uu$ from Example~\ref{pr: SP_priklad}, where $m=3$ and $t_1=2$, $t_2=t_3=1$. The prefixes of $\uu$ satisfy
\begin{equation*}
\begin{array}{lcl}
u_0 &=& 0\,, \\
u_1 &=& \underbrace{0}_{u_0}\underbrace{0}_{u_0}1 = u_0^21\,, \\
u_2 &=& \underbrace{001}_{u_1}\underbrace{001}_{u_1}\underbrace{0}_{u_1'}2 = u_1^2u_1'2\,, \\
u_3 &=& \underbrace{00100102}_{u_2}\underbrace{00100102}_{u_2}\underbrace{0010}_{u_2'} = u_2^2u_2'\,, \\
u_4 &=& \underbrace{00100102001001020010}_{u_3}\underbrace{00100102001001020010}_{u_3}\underbrace{00100102001}_{u_3'} = u_3^2u_3'\,, \\
u_5 &=& \underbrace{001001020010010200100010010200100102001000100102001}_{u_4}\\
&&\underbrace{001001020010010200100010010200100102001000100102001}_{u_4}\\
&&\underbrace{0010010200100102001000100102}_{u_4'} = u_4^2u_4'\,.
\end{array}
\end{equation*}
\end{example}

Let us turn our attention to the attractors of prefixes of $m$-ary simple Parry sequences. First, we summarize known results from~\cite{GhRoSt2024}.
We keep the following notation: $\Gamma_{-1}=\emptyset$ and
\begin{equation}\label{eq:Gamma}
    \Gamma_n = \begin{cases}
        \{U_0-1, U_1-1, \ldots, U_n-1\} & \text{for } n \in \mathbb{N}, n \leq m-1\,; \\
        \{U_{n-m+1}-1, U_{n-m+2}-1, \ldots, U_n-1\} & \text{for}\  n \in \mathbb{N}, n \geq m\,.
    \end{cases}
\end{equation}
The attractors of prefixes of $m$-ary simple Parry sequences of size $m+1$ may be deduced using Theorem~10 from~\cite{GhRoSt2024}. The authors used the notation $Q_n$ for the length of the longest prefix of $\uu$ that is a~power of $u_n$ and
$$P_n=\begin{cases} 
U_n & \quad \text{for } n \in \mathbb{N}, n \leq m-1\,;\\
U_n+U_{n-m+1}-U_{n-m}-1 & \quad \text{for}\  n \in \mathbb{N}, n \geq m\,.
\end{cases}$$
Obviously, $U_n\leq P_n<U_{n+1}$.

Using Lemma~\ref{lem: SP_mocnina}, the assumption of Theorem~10 from~\cite{GhRoSt2024} that every prefix of length $U_{n+1}-1$ is a~power of $u_n$ is met. Consequently, Theorem~10 from~\cite{GhRoSt2024} applied to simple Parry sequences takes the following form.
\newpage
\begin{theorem}\label{thm:puvodni}
Let $\uu$ be a simple Parry sequence from Definition~\ref{def:simple-Parry}. 
For all $n\in \mathbb N$, 
\begin{enumerate}
\item every prefix of length $\ell \in [U_n, Q_n]$ has the attractor $\Gamma_{n-1}\cup \{U_n-1\}$;
\item every prefix of length $\ell \in [P_n, Q_n]$ has the attractor $\Gamma_{n}$.
\end{enumerate}
\end{theorem}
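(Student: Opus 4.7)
The plan is to derive the statement as a direct specialization of Theorem~10 of~\cite{GhRoSt2024} to simple Parry sequences. That theorem establishes exactly these two attractor conclusions under a single structural hypothesis: every prefix of the sequence of length at most $U_{n+1}-1$ must be a power of $u_n$. So the only thing I need to check is that this hypothesis holds in our setting; the two items then follow at once by reading off their result with the notation used here.

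The hypothesis is verified using Lemma~\ref{lem: SP_mocnina}. That lemma tells us that for every $n \in \mathbb{N}$ the word $u_{n+1}$ without its last letter is a power of $u_n$. Since $\uu$ begins with $u_{n+1}$, every prefix of $\uu$ of length at most $U_{n+1}-1$ is a prefix of this power of $u_n$, and therefore, by the convention on ``power'' adopted in the preliminaries (a word of the form $u_n^k u_n'$ for some prefix $u_n'$ of $u_n$), is itself a power of $u_n$. This is precisely the structural input required by Theorem~10 of~\cite{GhRoSt2024}.

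I do not anticipate any substantive obstacle: the argument is an unpacking of a known result. The only care needed is bookkeeping to confirm that the definitions of $\Gamma_n$, $P_n$ and $Q_n$ given here line up with those of~\cite{GhRoSt2024} after specialization, and that the ranges $[U_n,Q_n]$ and $[P_n,Q_n]$ are nonempty. The latter follows from the inequalities $U_n\leq P_n<U_{n+1}$ recorded just before the theorem, themselves a direct consequence of the recursive formulas of Lemma~\ref{lem: SP_rekurze}.
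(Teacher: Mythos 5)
Your proposal is correct and follows essentially the same route as the paper: the paper likewise justifies this theorem by invoking Lemma~\ref{lem: SP_mocnina} to verify that the prefix of length $U_{n+1}-1$ is a power of $u_n$ (the hypothesis of Theorem~10 in~\cite{GhRoSt2024}) and then reads off the two items in the present notation. Your extra remark that any shorter prefix is then also a power of $u_n$, by the paper's convention $u=v^kv'$ with $v'$ a prefix of $v$, is exactly the right bookkeeping.
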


Since $Q_n\geq U_{n+1}-1$ for $n< m-1$ and $Q_n\geq U_{n+1}$ for $n\geq m-1$ by Lemma~\ref{lem: SP_mocnina}, we immediately obtain the following corollary showing that each prefix of an $m$-ary simple Parry sequence has an attractor of size at most $m+1$.
\begin{corollary}\label{coro: thm10}
Let $\uu$ be a simple Parry sequence from Definition~\ref{def:simple-Parry}. 
For all $n\in \mathbb N$, 
\begin{enumerate}
\item every prefix of length $\ell \in [U_n, U_{n+1}-1]$ has the attractor $\Gamma_{n}$ for $n\leq m-1$;
\item every prefix of length $\ell \in [U_n, U_{n+1}]$ has the attractor $\Gamma_{n-1}\cup \{U_n-1\}$ for $n\geq m$;
\item every prefix of length $\ell \in [P_n, U_{n+1}]$ has the attractor $\Gamma_{n}$ for $n\geq m-1$;
\item every prefix of length $\ell \in [U_{n+1}, Q_{n}]$ has the attractor $\Gamma_{n}$ for $n\geq m-1$.
\end{enumerate}
\end{corollary}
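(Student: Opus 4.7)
The plan is to read Corollary~\ref{coro: thm10} off directly from Theorem~\ref{thm:puvodni}, using Lemma~\ref{lem: SP_mocnina} only to supply an explicit lower bound on $Q_n$. Specifically, since Lemma~\ref{lem: SP_mocnina} says that $u_{n+1}$ is a power of $u_n$ whenever $n \geq m-1$, and $u_{n+1}$ with its last letter removed is a power of $u_n$ for every $n \in \mathbb N$, the defining property of $Q_n$ as the length of the longest prefix of $\uu$ that is a power of $u_n$ immediately gives
$$Q_n \geq U_{n+1}-1 \ \text{for}\ n < m-1, \qquad Q_n \geq U_{n+1} \ \text{for}\ n \geq m-1.$$

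For parts~(1) and~(2) I would invoke Theorem~\ref{thm:puvodni}(1), which provides the attractor $\Gamma_{n-1}\cup\{U_n-1\}$ on the interval $[U_n,Q_n]$. In the range $n \leq m-1$ the formula~\eqref{eq:Gamma} for $\Gamma_n$ shows $\Gamma_{n-1}\cup\{U_n-1\} = \Gamma_n$, and the bound $Q_n \geq U_{n+1}-1$ gives the inclusion $[U_n, U_{n+1}-1] \subseteq [U_n,Q_n]$; this yields part~(1). For $n\geq m$, no such set-theoretic collapse occurs, but the stronger bound $Q_n \geq U_{n+1}$ now applies, giving $[U_n, U_{n+1}] \subseteq [U_n, Q_n]$ and hence part~(2).

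For parts~(3) and~(4) I would invoke Theorem~\ref{thm:puvodni}(2), which supplies $\Gamma_n$ as attractor on $[P_n, Q_n]$ whenever $n \geq m-1$. Since $Q_n \geq U_{n+1}$ in this range, both subintervals $[P_n,U_{n+1}]$ and $[U_{n+1},Q_n]$ are contained in $[P_n,Q_n]$ as soon as one verifies $P_n \leq U_{n+1}$. This is trivial for $n = m-1$ (where $P_n = U_n$) and for $n \geq m$ it follows from the recurrence $U_{n+1} = t_1 U_n + t_2 U_{n-1} + \cdots + t_m U_{n-m+1}$ coming from Lemma~\ref{lem: SP_rekurze}, combined with the standing assumptions $t_1 \geq 1$ and $t_m \geq 1$: one gets $U_{n+1} \geq U_n + U_{n-m+1} > U_n + U_{n-m+1} - U_{n-m} - 1 = P_n$.

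No substantive obstacle arises; the corollary is bookkeeping glue between Theorem~\ref{thm:puvodni} and Lemma~\ref{lem: SP_mocnina}. The only points meriting a little care are the boundary case $n = m-1$, where the two regimes of~\eqref{eq:Gamma} meet and both bounds on $Q_n$ are simultaneously in force (so one must check that the resulting attractor matches under both readings), and the small inequality $P_n \leq U_{n+1}$ needed to locate part~(4) inside Theorem~\ref{thm:puvodni}(2).
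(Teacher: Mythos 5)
Your proposal is correct and matches the paper's own derivation: the paper likewise obtains the corollary immediately from Theorem~\ref{thm:puvodni} by noting that Lemma~\ref{lem: SP_mocnina} gives $Q_n\geq U_{n+1}-1$ for $n<m-1$ and $Q_n\geq U_{n+1}$ for $n\geq m-1$, together with the stated inequality $U_n\leq P_n<U_{n+1}$. Your extra bookkeeping (the identity $\Gamma_{n-1}\cup\{U_n-1\}=\Gamma_n$ for $n\leq m-1$ and the verification of $P_n<U_{n+1}$ from the recurrence) is exactly the glue the paper leaves implicit.
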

Let us underline that the only prefixes for which the attractors of alphabet size are not known have their lengths in the interval $[Q_{n-1}, P_n]$ for $n\geq m$.

The authors of~\cite{GhRoSt2024} were also interested in the question of whether the minimal attractors of prefixes are always subsets of $\{U_n-1 \ : \ n\in \mathbb N\}$. Let us call such attractors {\em canonical}. Observing Corollary~\ref{coro: thm10}, all attractors of prefixes found in~\cite{GhRoSt2024} were canonical.
Let us recall one more result from~\cite{GhRoSt2024}, where the authors proved that for simple Parry sequences with affine factor complexity, the minimal attractors of prefixes are canonical and their minimality follows from the fact that their size equals the number of distinct letters in the prefix. 
\begin{theorem}[\cite{GhRoSt2024}]\label{thm: affine} 
Let $\uu$ be a simple Parry sequence from Definition~\ref{def:simple-Parry} with affine factor complexity, i.e., satisfying the following conditions:
 \begin{enumerate}
 \item $t_m=1$;
\item if there exists a word $v\neq \varepsilon$ such that $v$ is a proper prefix and a proper suffix of $t_1\cdots t_{m-1}$, then $t_1\cdots t_{m-1}=w^k$ for some word $w$ and $k\in \mathbb N, k\geq 2$.
 \end{enumerate}
Then the prefixes of $\uu$ have the following attractors:
\begin{itemize} 
\item For each $n \in \mathbb{N}$, $n \leq m-1$, the prefix of $\uu$ of length $\ell \in [U_n, U_{n+1}-1]$ has the attractor $\Gamma_n$. 
\item For each $n \in \mathbb{N}$, $n \geq m$, the prefix of $\uu$ of length $\ell \in [U_n, P_n]$ has the attractor $\Gamma_{n-1}$.
\item For each $n \in \mathbb{N}$, $n \geq m-1$, the prefix of $\uu$ of length $\ell \in [P_n, U_{n+1}]$ has the attractor $\Gamma_n$.
\end{itemize}

\end{theorem}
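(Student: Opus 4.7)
My plan is to observe first that items~(1) and~(3) of Theorem~\ref{thm: affine} coincide with items~(1) and~(3) of Corollary~\ref{coro: thm10}, so no new argument is required there. The substantive claim is item~(2): for every $n\geq m$ and every $\ell\in[U_n,P_n]$, the set $\Gamma_{n-1}$, of size exactly $m$ (the alphabet size), is already an attractor of the prefix of length $\ell$. Since any attractor must contain a position of each letter, the minimality claim implicit in the statement comes for free, and the whole job is to verify the attractor property.

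The natural starting point is item~(2) of Corollary~\ref{coro: thm10}, which provides $\Gamma_{n-1}\cup\{U_n-1\}$ as an attractor on the larger range $[U_n,U_{n+1}]$. The goal is therefore to show that on the subrange $[U_n,P_n]$ the extra position $U_n-1$ is redundant, i.e.\ every factor $f$ of the prefix that has an occurrence crossing $U_n-1$ also has an occurrence crossing some position of $\Gamma_{n-1}=\{U_{n-m}-1,\ldots,U_{n-1}-1\}$. The crucial structural fact, coming from the hypothesis $t_m=1$ and Lemma~\ref{lem: SP_rekurze}, is that
\[
u_n \;=\; u_{n-1}^{t_1}\,u_{n-2}^{t_2}\,\cdots\,u_{n-m+1}^{t_{m-1}}\,u_{n-m},
\]
so the final $U_{n-m}$ letters of $u_n$ form a copy of $u_{n-m}$ ending at position $U_n-1$. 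Because $u_{n-m}$ is a prefix of every $u_k$ with $k\geq n-m$ and each such $u_k$ appears repeatedly inside $u_n$, there is an abundance of ``shifted copies'' of a neighborhood of $U_n-1$ that sit on top of earlier positions in $\Gamma_{n-1}$; this gives the mechanism for relocating an occurrence of $f$ from $U_n-1$ to some $U_k-1$.

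The exact threshold $P_n=U_n+U_{n-m+1}-U_{n-m}-1$ is dictated by the second hypothesis, which forbids non-trivial borders of $t_1\cdots t_{m-1}$ except those arising from $t_1\cdots t_{m-1}$ being a proper power. This rules out spurious overlaps that would otherwise allow longer relocation ranges, and pins down $P_n$ as the largest length beyond which a factor straddling $U_n-1$ appears that genuinely has no other occurrence in the prefix, thereby forcing the attractor to switch at $\ell=P_n$ to the equivalent set $\Gamma_n$ of item~(3). The main obstacle is turning the informal relocation argument into a clean case analysis on $|f|$, which I would handle by induction on $n$, combining Observation~\ref{lem: atraktory_faktor} to move between boundary positions inside the power $u_n^{t_1}$ beginning $u_{n+1}$, Lemma~\ref{lem: SP_prefixy} to identify which prefixes of $u_n$ occur at which offsets, and Lemma~\ref{lem: SP_mocnina} to rewrite suitable prefixes as powers of shorter $u_k$'s. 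The borderline length $\ell=P_n$ is where the prefix/suffix hypothesis bites hardest and I expect the most delicate bookkeeping.
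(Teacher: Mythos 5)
First, a point of reference: the paper does not prove Theorem~\ref{thm: affine} at all --- it is quoted from \cite{GhRoSt2024} and used as a known result, so there is no in-paper proof to compare your attempt against. Your reduction of the first and third items to Items 1 and 3 of Corollary~\ref{coro: thm10} is correct, and you rightly isolate the second item as the only substantive claim; the remark about minimality is also fine.

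For that second item, however, what you have is a strategy outline with the decisive step missing. Reducing the problem to showing that the position $U_n-1$ in the attractor $\Gamma_{n-1}\cup\{U_n-1\}$ from Item 2 of Corollary~\ref{coro: thm10} is redundant for $\ell\le P_n$ is a legitimate shape of argument, but you never actually perform the relocation, and in particular you never show where hypothesis (2) --- the border condition on $t_1\cdots t_{m-1}$ --- enters; you only assert that it ``forbids spurious overlaps''. That hypothesis is exactly where the content lies: Example~\ref{ex: atraktory2} (with $m=4$ and $t_1t_2t_3t_4=2121$, which satisfies $t_m=1$ but violates condition (2)) exhibits a prefix of length in $[U_6,P_6]$ for which $\Gamma_5$ is \emph{not} an attractor, so any correct proof must use condition (2) concretely rather than as motivation, and your sketch gives no mechanism by which it would. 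A cleaner route --- the one this paper itself uses for the analogous second items of Proposition~\ref{prop: binary_ternary} and Theorem~\ref{veta: SP_atraktory} --- is to show that under the affine hypotheses the prefix of length $P_n$ is a power of $u_{n-1}$, i.e.\ $Q_{n-1}\ge P_n$, after which Item 2 of Theorem~\ref{thm:puvodni} (equivalently, Item 4 of Corollary~\ref{coro: thm10}, re-indexed) immediately yields the attractor $\Gamma_{n-1}$ on all of $[U_n,P_n]$. Indeed, using $t_m=1$ one has $u_n=u_{n-1}^{t_1}\cdots u_{n-m+1}^{t_{m-1}}u_{n-m}$ and hence (up to boundary adjustments for small $n$) $p_n=u_{n-1}^{t_1}u_{n-2}^{t_2}\cdots u_{n-m+1}^{t_{m-1}+1}$, and the border condition is precisely what guarantees that $u_{n-2}^{t_2}\cdots u_{n-m+1}^{t_{m-1}+1}$ occurs as a prefix of a power of $u_{n-1}$. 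That auxiliary claim is the lemma your proposal is missing, and without it (or a worked-out version of your relocation argument) the proof is incomplete.
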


In the sequel, in order to obtain new results, it turns out to be useful to work with prefixes other than those of length $P_n$. Let us introduce them.
To enable comparison, let us write the explicit form of the prefix of length $P_n$ for $n\geq m$. 
Denote by $p_n$ the following prefix of $\uu$
\begin{equation}\label{eq: p}
p_n= u_{n}u_{n-m}^{t_1-1}u_{n-m-1}^{t_2} \cdots u_{n-2m+1}^{t_m}\,.
\end{equation}
Then for $m\leq n < 2m-1$, the length of $p_n$ equals $P_n$, and for $n\geq 2m-1$, the length of $p_n$ equals $P_n+1$.

\begin{lemma}\label{lem: z}
Let $\uu$ be a simple Parry sequence from Definition~\ref{def:simple-Parry}. For $n\in \mathbb N,\ n\geq m$, denote 
\begin{equation}\label{eq: z}
\begin{array}{rcl}
z_n &=& u_{n}u_{n-m}^{t_1-t_m}u_{n-m-1}^{t_2} \cdots u_{n-2m+1}^{t_m}\,,\\
&&\\
s_n&=&u_{n}u_{n-m}^{t_1-t_m+1}u_{n-m-1}^{t_2} \cdots u_{n-2m+1}^{t_m}\,,
\end{array}
\end{equation}
and denote $Z_n=|z_n|$ and $S_n=|s_n|$. Then 
\begin{itemize}
\item both $z_n$ and $s_n$ are prefixes of $\uu$;
\item $Z_n=U_n+U_{n-m+1}-t_mU_{n-m}$ \ for $n\geq 2m-1$;
\item $S_n=Z_n+U_{n-m}$;
\item $U_n\leq Z_n< S_n \leq U_{n+1}$;
\item $Z_n\leq P_n$ \ for $t_m>1$.
\end{itemize}
% \begin{itemize}
% \item $|p_n|=P_n$\ for $m\leq n< 2m-1$;
% \item $|p_n|=P_n$\ for $n\geq 2m-1$.
% \end{itemize}

\end{lemma}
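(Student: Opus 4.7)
The key observation is that both $z_n$ and $s_n$ can be written as $u_n^{k_1} u_{n-1}^{k_2} \cdots u_0^{k_{n+1}}$ for specific exponent sequences. For $z_n$ the sequence is $k_1 = 1$, $k_2 = \cdots = k_m = 0$, $k_{m+1} = t_1 - t_m$, $k_{m+j} = t_j$ for $2 \leq j \leq m$, and zeros thereafter; for $s_n$ we change only $k_{m+1}$ to $t_1 - t_m + 1$. (Here $t_1 - t_m \geq 0$ because the lexicographic condition in Definition~\ref{def:simple-Parry} applied at $i=m$ forces $t_m \leq t_1$.) Applying Lemma~\ref{lem: SP_prefixy} with $n$ replaced by $n+1$, I will verify the required inequalities $k_i k_{i+1} \cdots 0^{\omega} \prec_{\text{lex}} t_1 \cdots t_m 0^{\omega}$ for all $i \in \{1,\ldots,n+1\}$. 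These are trivial inside blocks of zeros; at position $m+1$ they reduce to $t_1 - t_m < t_1$ for $z_n$ (which uses $t_m \geq 1$) and to $t_1 - t_m + 1 < t_1$ for $s_n$ (which uses $t_m \geq 2$); at positions $m+j$ with $j \geq 2$ they reduce to the lexicographic condition from Definition~\ref{def:simple-Parry}. This gives $z_n$ as a prefix of $u_{n+1}$ in all cases, and $s_n$ as a prefix of $u_{n+1}$ whenever $t_m \geq 2$.

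The delicate case is $s_n$ with $t_m = 1$, where the comparison at position $m+1$ is an equality rather than a strict inequality. Here Lemma~\ref{lem: SP_rekurze} collapses the tail $u_{n-m}^{t_1} u_{n-m-1}^{t_2} \cdots u_{n-2m+1}$ to $u_{n-m+1}$, so that $s_n = u_n u_{n-m+1}$. I will argue that $u_{n-m+1}$ prefixes the portion of $u_{n+1}$ immediately following the initial $u_n$: that portion begins with $u_{n-j+1}^{t_j}$ for the smallest $j \in \{1,\ldots,m\}$ with $t_j \geq 1$ (which exists because $t_m \geq 1$), and $u_{n-m+1}$ is a prefix of $u_{n-j+1}$ whenever $j \leq m$, as follows from the chain $u_{n-m+1} \preceq u_{n-m+2} \preceq \cdots \preceq u_{n-j+1}$.

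Once both prefix claims are established, the bulleted items follow quickly. The length identity $Z_n = U_n + U_{n-m+1} - t_m U_{n-m}$ for $n \geq 2m-1$ is obtained by substituting the recurrence $U_{n-m+1} = t_1 U_{n-m} + \sum_{i=2}^{m} t_i U_{n-m-i+1}$ from Lemma~\ref{lem: SP_rekurze} (valid exactly when $n-m+1 \geq m$) into the direct length expression for $z_n$. The identity $S_n = Z_n + U_{n-m}$ is immediate from the definitions. For the chain $U_n \leq Z_n < S_n \leq U_{n+1}$, the first inequality is clear because $z_n$ extends $u_n$; the middle follows from $S_n - Z_n = U_{n-m} \geq 1$ for $n \geq m$; and the last holds because $s_n$ was shown to be a prefix of $u_{n+1}$. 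Finally, when $t_m \geq 2$ one computes $P_n - Z_n = (t_m - 1) U_{n-m} - 1 \geq U_{n-m} - 1 \geq 0$, yielding $Z_n \leq P_n$.

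The main obstacle is the failure of strict lexicographic inequality in the Lemma~\ref{lem: SP_prefixy} argument for $s_n$ when $t_m = 1$. All other items are straightforward algebraic consequences once the prefix property is secured, and the obstacle is resolved by the combinatorial identification $s_n = u_n u_{n-m+1}$ together with the observation that shorter $u_j$'s are prefixes of longer ones.
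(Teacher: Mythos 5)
Your proposal is correct and follows essentially the same route as the paper: Lemma~\ref{lem: SP_prefixy} supplies the prefix property and Lemma~\ref{lem: SP_rekurze} the length identities. If anything, you are more careful than the paper's one-line proof, which silently passes over the case $t_m=1$ for $s_n$, where the strict lexicographic inequality demanded by Lemma~\ref{lem: SP_prefixy} degenerates to an equality and one must instead note that the tail collapses to $u_{n-m+1}$ itself (up to its last letter when $m\leq n<2m-1$), exactly as you do.
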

\begin{proof}
The words $z_n$ and $s_n$ are prefixes of $\uu$ since $u_{n-m}^{t_1-t_m}u_{n-m-1}^{t_2}\cdots u_{n-2m+1}^{t_m}$, resp. \\ $u_{n-m}^{t_1-t_m+1}u_{n-m-1}^{t_2}\cdots u_{n-2m+1}^{t_m}$ is a prefix of $u_{n-m+1}$ by Lemma~\ref{lem: SP_prefixy} and $u_n u_{n-m+1}$ is a prefix of $u_{n+1}$ by the same lemma. The statements on lengths follow from Lemma~\ref{lem: SP_rekurze}.

\end{proof}

Using the prefixes $z_n$ and $s_n$, we can deduce the following statement. 
\begin{proposition}\label{prop: z}
Let $\uu$ be a simple Parry sequence from Definition~\ref{def:simple-Parry}. Let $n\in \mathbb N, \ n\geq m$.
\begin{enumerate}
\item If $t_1>t_m,$ then every prefix of length $\ell \in [Z_n, U_{n+1}]$ has the attractor $\Gamma_{n}$.
\item If $t_1=t_m,$ then 
\begin{itemize}
\item every prefix of length $\ell \in [Z_n, S_n]$ has the attractor $$\Gamma_{n-1}\cup \{U_n-(t_m-1)U_{n-m}-1\}\setminus \{U_{n-m}-1\}\,;$$
\item every prefix of length $\ell \in [S_n, U_{n+1}]$ has the attractor $\Gamma_{n}$.
\end{itemize}
\end{enumerate}
Moreover, in all cases, the attractors are minimal and they differ by at most one position from a canonical attractor. 
\end{proposition}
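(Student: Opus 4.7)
My starting point is Corollary~\ref{coro: thm10}(2), which asserts that $\Gamma_{n-1}\cup\{U_n-1\}=\Gamma_n\cup\{U_{n-m}-1\}$ is an attractor of every prefix of length $\ell\in[U_n,U_{n+1}]$; by Lemma~\ref{lem: z} both $[Z_n,U_{n+1}]$ and $[Z_n,S_n]$ lie inside this range. Thus in each case I only need to show that one (in Case~1) or two (in Case~2, for $\ell<S_n$) attractor positions can be replaced, i.e., to exhibit for every factor crossing a removed position an alternative occurrence crossing the replacement.

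The structural ingredient, obtained by iterating Lemma~\ref{lem: SP_rekurze}, is that $u_n$ begins with $u_{n-m}^{t_1}$ (via its prefix $u_{n-m+1}$, which itself begins with $u_{n-m}^{t_1}$) and ends with $u_{n-m}^{t_m}$. In Case~1 the prefix $z_n$ therefore contains the contiguous block $u_{n-m}^{t_1}$ at positions $U_n-t_mU_{n-m}$ through $U_n+(t_1-t_m)U_{n-m}-1$, obtained by fusing the trailing $u_{n-m}^{t_m}$ of $u_n$ with the appended $u_{n-m}^{t_1-t_m}$; this block has $U_n-1\in\Gamma_n$ as a genuine internal boundary. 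My plan is to show, by a sliding argument exploiting the nested self-similarity of $u_n$, that any factor with an occurrence crossing $U_{n-m}-1$ (an internal boundary of the initial $u_{n-m}^{t_1}$ block inside $u_n$) admits another occurrence inside the terminal block; then Observation~\ref{lem: atraktory_faktor} slides this occurrence to one crossing $U_n-1$.

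In Case~2 the appended word is empty, so $U_n-1$ is merely the right endpoint of the terminal $u_{n-m}^{t_m}$ of $u_n$ and is not an internal boundary usable by Observation~\ref{lem: atraktory_faktor}. To bypass this, I use that the tail of $z_n$ past $u_n$, namely $u_{n-m-1}^{t_2}\cdots u_{n-2m+1}^{t_m}$, is a proper prefix $v$ of $u_{n-m}$ by Lemma~\ref{lem: SP_prefixy}, so the end of $z_n$ reads $u_{n-m}^{t_m}v$. The internal boundaries usable by Observation~\ref{lem: atraktory_faktor} are then the $t_m-1$ positions $U_n-jU_{n-m}-1$ for $j=1,\ldots,t_m-1$, and the position $U_n-(t_m-1)U_{n-m}-1$ chosen in the statement is their smallest. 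A factor crossing $U_n-1$ has an alternative occurrence at one of these internal boundaries, obtained by a shift by a multiple of $U_{n-m}$ enabled by the $u_{n-m}^{t_m}$ structure. Once $\ell\geq S_n$, the tail past $u_n$ has absorbed a further full $u_{n-m}$, turning the partial power into $u_{n-m}^{t_m+1}v$ and making $U_n-1$ a genuine internal boundary, whereupon the Case~1 argument recovers $\Gamma_n$.

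The main obstacle I anticipate is the sliding argument itself: proving that every factor crossing $U_{n-m}-1$ or (in Case~2) $U_n-1$ reappears inside the relevant terminal block. Short factors are easy because they are factors of $u_{n-m+1}$, which occurs several times in $z_n$ through the nested self-similarity; longer factors require an induction exploiting the decomposition $u_n=u_{n-1}^{t_1}u_{n-2}^{t_2}\cdots u_{n-m}^{t_m}$, reducing the analysis at level $n-m$ to that at levels $n-m+1,\ldots,n-1$, whose associated positions lie in $\Gamma_n$. A secondary bookkeeping task is to treat the boundary cases $m\leq n<2m-1$, for which the closed form of $Z_n$ in Lemma~\ref{lem: z} does not apply verbatim but the structural arguments go through unchanged.
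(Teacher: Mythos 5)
Your high-level plan coincides with the paper's: start from the attractor $\Gamma_{n-1}\cup\{U_n-1\}$ supplied by Corollary~\ref{coro: thm10}, identify the fused block of copies of $u_{n-m}$ straddling position $U_n-1$, and relocate occurrences using Observation~\ref{lem: atraktory_faktor}. However, the step you flag as ``the main obstacle'' --- showing that every factor crossing $U_{n-m}-1$ reappears in the terminal block --- is precisely the heart of the proof, and your proposed resolution (an induction over levels for ``longer factors'') is neither carried out nor needed. The missing observation is that only factors crossing $U_{n-m}-1$ but \emph{not} $U_{n-m+1}-1$ require relocation (all others already cross $\Gamma_n$), and any such factor is automatically contained in the prefix $x=u_{n-m}^{t_1}u_{n-m-1}^{t_2}\cdots u_{n-2m+1}^{t_m}$ of $z_n$, which by Lemmas~\ref{lem: SP_rekurze} and~\ref{lem: SP_mocnina} equals $u_{n-m+1}$ (possibly without its last letter), is a power of $u_{n-m}$, and --- crucially --- is simultaneously a \emph{suffix} of $z_n$ ending at position $Z_n-1$. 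Translating the occurrence to the suffix copy of $x$ and applying Observation~\ref{lem: atraktory_faktor} inside it lands on $U_n-1$ when $t_1>t_m$ (the $t_m$-th boundary is then internal) and on $U_n-(t_m-1)U_{n-m}-1$ when $t_1=t_m$ (the first boundary of the suffix copy). There are no ``longer factors'' to worry about, so no induction is required.

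Two further points are left unaddressed. In Case~2 you only discuss relocating factors that cross $U_n-1$, but the stated attractor also deletes $U_{n-m}-1$; the relocation of those factors is the translation just described, and one must check that the translated occurrence fits inside the prefix of length $\ell$ --- this is exactly why $Z_n$ is the correct threshold, since the suffix copy of $x$ ends at position $Z_n-1$. Moreover, for $\ell\in(Z_n,S_n]$ one must verify that the factors ending beyond position $Z_n-1$ are still covered: every factor avoiding the new position $U_n-(t_m-1)U_{n-m}-1$ lies either in $u_n$ or in the suffix starting at $U_n-(t_m-1)U_{n-m}$, and the latter is a prefix of $u_{n-m}^{t_m}u_{n-m-1}^{t_2}\cdots u_{n-2m+1}^{t_m}$, hence a factor of $u_n$ (here one uses that $u_{n-m-1}^{t_2}\cdots u_{n-2m+1}^{t_m}$ is a prefix of $u_{n-m}$). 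These are the pieces the paper's proof supplies and your sketch does not.
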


\begin{proof}
Using Item 3 of Corollary~\ref{coro: thm10}, we observe that $u_{n}$ has the attractor $\Gamma_{n-1}$ for all $n\geq m$. Since $u_{n+1}$ is a power of $u_n$ and $U_n\leq Z_n <U_{n+1}$, the prefix $z_n$ has, by Lemma~\ref{lem: atraktory_mocnina}, the attractor $\Gamma_{n-1}\cup \{U_n-1\}=\{U_{n-m}-1, U_{n-m+1}-1, \ldots, U_n-1\}$. 
\begin{enumerate}
\item For $t_1>t_m$, let us explain that every factor of $z_n$ crossing $U_{n-m}-1$, but not $U_{n-m+1}-1$, has also an occurrence containing $U_{n}-1$. By Lemma~\ref{lem: SP_rekurze}, the word $z_n$ has the following form (we illustrate the form for $n\geq 2m-1$, the reader is invited to draw the prefix for $n<2m-1$; it does not influence the subsequent arguments)

\begin{equation}\label{eq:z_n1}
\begin{split}
    z_n &= u_{n}u_{n-m}^{t_1-t_m}u_{n-m-1}^{t_2} \cdots u_{n-2m+1}^{t_m}  \\
    &= u_{n-1}^{t_1}u_{n-2}^{t_2} \cdots u_{n-m+1}^{t_{m-1}}u_{n-m}^{t_m}u_{n-m}^{t_1-t_m}u_{n-m-1}^{t_2} \cdots u_{n-2m+1}^{t_m}  \\
    &=\underbrace{\underbrace{\teckaa{red}{u_{n-m}}u_{n-m}^{t_1-1}u_{n-m-1}^{t_2} \cdots {u_{n-2m+1}^{t_m}}}_{x} \teckaa{red}{}\cdots \teckaa{red}{u_{n-m}^{t_m}}}_{u_n}u_{n-m}^{t_1-t_m}u_{n-m-1}^{t_2} \cdots u_{n-2m+1}^{t_m}\,,
\end{split}
\end{equation}
where $x$ denotes the prefix of $z_n$ equal to $u_{n-m+1}$ for $n\geq 2m-1$ or equal to $u_{n-m+1}$ without the last letter for $n<2m-1$. The red positions correspond to $U_{n-m}-1$, $U_{n-m+1}-1$ and $U_{n}-1$. Using Lemma~\ref{lem: SP_rekurze}, we may write $x = u_{n-m}^{t_1}u_{n-m-1}^{t_2} \cdots u_{n-2m+1}^{t_m}$ and $x$ is thus also a~suffix of $z_n$. By Lemma~\ref{lem: SP_mocnina}, we can see that $x$ is a power of $u_{n-m}$. 
Now, every factor $f$ of $z_n$ crossing $U_{n-m}-1$, but not $U_{n-m+1}-1$, has an occurrence in $x$ containing $U_{n-m}-1$. Thanks to $t_1>t_m$, the first item of Observation~\ref{lem: atraktory_faktor} implies that $f$ has also an occurrence in $x$ containing $t_mU_{n-m}-1$. Since $x$ is, at the same time, a~suffix of $z_n$, the factor $f$ has an occurrence in $z_n$ containing $U_{n}-1$.
Therefore, $z_n$ has the attractor $\Gamma_{n} = \{U_{n-m+1}-1, \ldots, U_{n-1}-1, U_{n}-1\}$, too. See~\eqref{eq:z_n1}. 
Since $u_{n+1}$ is a power of $u_n$ and $U_n\leq Z_n<U_{n+1}$, we find by Lemma~\ref{lem: atraktory_mocnina} that every prefix of length $\ell\in [Z_n, U_{n+1}]$ has the attractor $\Gamma_n\cup\{U_n-1\}=\Gamma_n$.
\item For $t_1=t_m$, the proof of the fact that every prefix of length $\ell \in [S_n, U_{n+1}]$ has the attractor $\Gamma_{n}$ is analogous to the proof of the previous item. 
Consider now an arbitrary prefix of $\uu$ of length $\ell \in [Z_n, S_n]$. We want to show that $\Gamma=\Gamma_{n-1}\cup \{U_n-(t_m-1)U_{n-m}-1\}\setminus \{U_{n-m}-1\}$ is its attractor. 
Let us write the prefixes $z_n$ and $s_n$ below
\begin{equation}\label{eq:z_n}
\begin{split}
    z_n &= u_{n}u_{n-m}^{t_1-t_m}u_{n-m-1}^{t_2} \cdots u_{n-2m+1}^{t_m}  \\
    &= u_{n-1}^{t_1}u_{n-2}^{t_2} \cdots u_{n-m+1}^{t_{m-1}}u_{n-m}^{t_m}u_{n-m}^{t_1-t_m}u_{n-m-1}^{t_2} \cdots u_{n-2m+1}^{t_m}  \\
    &=\underbrace{\teckaa{red}{u_{n-m}}u_{n-m}^{t_1-1}u_{n-m-1}^{t_2} \cdots {u_{n-2m+1}^{t_m}}}_{x} \teckaa{red}{}\cdots \underbrace{\teckaa{red}{u_{n-m}}u_{n-m}^{t_1-1}u_{n-m-1}^{t_2} \cdots u_{n-2m+1}^{t_m}}_{x}\,,
\end{split}
\end{equation}
\begin{equation}\label{eq:s_n}
\begin{split}
    s_n &= u_{n}u_{n-m}^{t_1-t_m+1}u_{n-m-1}^{t_2} \cdots u_{n-2m+1}^{t_m}  \\
    &= u_{n-1}^{t_1}u_{n-2}^{t_2} \cdots u_{n-m+1}^{t_{m-1}}u_{n-m}^{t_m}u_{n-m}^{t_1-t_m+1}u_{n-m-1}^{t_2} \cdots u_{n-2m+1}^{t_m}  \\
    &=\underbrace{\teckaa{red}{u_{n-m}}u_{n-m}^{t_1-1}u_{n-m-1}^{t_2} \cdots \teckaa{red}{u_{n-2m+1}^{t_m}} \cdots \teckaa{red}{u_{n-m}}}_{y}u_{n-m}^{t_1}u_{n-m-1}^{t_2} \cdots u_{n-2m+1}^{t_m}\,,
\end{split}
\end{equation}
where, in both cases, the positions $U_{n-m}-1$, $U_{n-m+1}-1$ and $U_n-(t_m-1)U_{n-m}-1$ are in red.
Every factor $f$ of the prefix of length $\ell$ either crosses the last position $U_n-(t_m-1)U_{n-m}-1$ of $\Gamma$ or is contained in $u_n$ (more precisely, it is either contained in the prefix $y$, which is a~factor of $u_n$ or in the suffix $u_{n-m}^{t_1}u_{n-m-1}^{t_2} \cdots u_{n-2m+1}^{t_m}$, which is a prefix of $u_{n-m+1}$) and thus crosses $\Gamma_{n-1}$. It remains to explain that if $f$ is contained in $u_n$ and crosses $U_{n-m}-1$, but not $U_{n-m+1}-1$, then $f$ also crosses some other position of $\Gamma$. In such case, $f$ is contained in the prefix $x=\teckaa{red}{u_{n-m}}u_{n-m}^{t_1-1}u_{n-m-1}^{t_2} \cdots u_{n-2m+1}^{t_m}$ of $z_n$ and crosses the red position, thus $f$ is also contained in the suffix of $z_n$ and crosses the last position of $\Gamma$, see~\eqref{eq:z_n}. 

\end{enumerate}
In all cases, the considered prefixes contain all $m$ letters and the size of the attractors is equal to the alphabet size $m$. Consequently, the attractors are minimal. Furthermore, since $\Gamma_k \subset \{U_n-1 \ : \ n \in \mathbb N\}$ for all $k\in \mathbb N$, the attractors differ by at most one position from a canonical attractor.
\end{proof}

We have prepared everything for the description of minimal attractors of prefixes of simple Parry sequences. 
We start with the description for binary simple Parry sequences, where minimal attractors of all prefixes are canonical, i.e., are subsets of $\{U_n-1 \ : \ n \in \mathbb N\}$. 
For general alphabet size, we determine the minimal attractors of prefixes of simple Parry sequences in two theorems.
In the first one, the attractors of all prefixes are again canonical, but some additional conditions are imposed. In the second one, no additional condition is required, but some prefixes do not necessarily have canonical attractors (nevertheless, they differ by at most one element from a canonical attractor). In all cases, the minimality of attractors follows from the fact that they are of alphabet size.

\begin{proposition}\label{prop: binary_ternary}
Let $\uu$ be a binary simple Parry sequence from Definition~\ref{def:simple-Parry}. 

\begin{itemize} 
\item For $n \in \{0,1\}$, the prefix of $\uu$ of length $\ell \in [U_n, U_{n+1}-1]$ has the attractor $\Gamma_n$. 
\item For each $n \in \mathbb{N}$, $n \geq 2$, the prefix of $\uu$ of length $\ell \in [U_n, Z_n]$ has the attractor $\Gamma_{n-1}$.
\item For each $n \in \mathbb{N}$, $n \geq 2$, the prefix of $\uu$ of length $\ell \in [Z_n, U_{n+1}]$ has the attractor $\Gamma_n$.
\end{itemize}
Moreover, in all cases, the attractors are minimal. 
\end{proposition}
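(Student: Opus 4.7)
The proof treats each of the three bullet points separately.

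Item~1 is immediate: specialising Corollary~\ref{coro: thm10} item~1 to $m = 2$ gives exactly the assertion for $n \in \{0,1\}$.

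For item~2, I would proceed by induction on $n \geq 2$. The crucial algebraic identity, obtained from Lemma~\ref{lem: SP_rekurze} by expanding $u_n = u_{n-1}^{t_1} u_{n-2}^{t_2}$ and collapsing $u_{n-2}^{t_1} u_{n-3}^{t_2} = u_{n-1}$, is
\[
z_n \;=\; u_n u_{n-2}^{t_1-t_2} u_{n-3}^{t_2} \;=\; u_{n-1}^{t_1+1} \qquad (n \geq 3),
\]
while for $n = 2$ a direct computation yields $z_2 = u_1^{t_1} \cdot 0^{t_1}$, which is again a power of $u_1 = 0^{t_1}1$ (its prefix of length $t_1$ being $0^{t_1}$). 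Hence every prefix of length $\ell \in [U_n, Z_n]$ is a power of $u_{n-1}$ of length at least $U_{n-1}$. Combining this with the inductive hypothesis that $u_{n-1}$ has attractor $\Gamma_{n-2}$ (item~2 at $n-1$; the base case $n = 2$ uses that $u_1$ has attractor $\Gamma_1$ from item~1), Lemma~\ref{lem: atraktory_mocnina} produces the candidate attractor $\Gamma_{n-2} \cup \{U_{n-1}-1\}$. For $n = 2$ this already equals $\Gamma_1 = \Gamma_{n-1}$; for $n \geq 3$ the one extra element (either $U_0-1$ when $n = 3$ or $U_{n-3}-1$ when $n \geq 4$) must be removed by a factor-occurrence analysis showing that every factor of the prefix crossing it admits an alternative occurrence crossing a position in $\Gamma_{n-1}$.

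For item~3, I would split on whether $t_1 > t_2$ or $t_1 = t_2$. The case $t_1 > t_2$ follows immediately from Proposition~\ref{prop: z} item~1. When $t_1 = t_2$, Proposition~\ref{prop: z} item~2 covers the subrange $\ell \in [S_n, U_{n+1}]$; and when additionally $t_2 = 1$, the extra attractor position supplied by Proposition~\ref{prop: z} simplifies to $U_n - 1$, so that its attractor already coincides with $\Gamma_n$ on $[Z_n, S_n]$ too, completing that case. The remaining sub-case $t_1 = t_2 \geq 2$ with $\ell \in [Z_n, S_n]$ I would reduce, via Lemma~\ref{lem: atraktory_mocnina} applied after recognising both $z_n$ and the prefix as powers of $u_n$ (the former because $u_{n-3}^{t_2}$ is a prefix of $u_n$ when $t_1 = t_2$, the latter by Lemma~\ref{lem: SP_mocnina}), to showing that $\Gamma_n$ attracts $z_n = u_{n-1}^{t_1+1}$. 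I would establish this by combining Lemma~\ref{lem: atraktory_faktor} on the $u_{n-1}$-block structure of $z_n$ (every factor of length at least $U_{n-1}$ spans a block boundary and thus inherits an occurrence crossing $U_{n-1}-1$) with the observation that, under $t_1 = t_2$, the decomposition $u_{n-1} = u_{n-2}^{t_2} u_{n-3}^{t_2}$ places the front/back interface at offset $t_2 U_{n-2} - 1$, which is exactly where $U_n - 1$ sits inside the last $u_{n-1}$-block of $z_n$; factors fitting in a single block but spanning this interface therefore cross $U_n - 1$.

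The main obstacle is the reduction step in item~2 and the direct factor analysis in the sub-case $t_1 = t_2 \geq 2$ of item~3: both amount to verifying, for factors of a highly self-similar morphic prefix, that one specific position can be safely dropped from a candidate attractor, which requires careful exploitation of both the $u_{n-1}$-block repetition structure and the finer $u_{n-2}$--$u_{n-3}$ decomposition.
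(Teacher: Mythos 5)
Your item~1 and your algebra are fine ($z_n=u_{n-1}^{t_1+1}$ for $n\geq 3$, $z_2=u_1^{t_1}0^{t_1}$, both powers of $u_{n-1}$ --- this is exactly the computation the paper does), and the observation that for $t_1=t_2=1$ the attractor of Proposition~\ref{prop: z} already equals $\Gamma_n$ on $[Z_n,S_n]$ is a correct shortcut. But the proof has two genuine gaps. In item~2, after producing the size-three candidate $\Gamma_{n-2}\cup\{U_{n-1}-1\}=\Gamma_{n-1}\cup\{U_{n-3}-1\}$ via induction and Lemma~\ref{lem: atraktory_mocnina}, you state that the extra element ``must be removed by a factor-occurrence analysis'' and never perform it. That removal \emph{is} the assertion of item~2, and it is not a routine step: Example~\ref{ex: atraktory2} shows that the analogous deletion of one position from a size-$(m{+}1)$ attractor can fail for other parameter choices. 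The paper needs no such analysis here: once $z_n$ is known to be a power of $u_{n-1}$, one has $Z_n\leq Q_{n-1}$, and Item~4 of Corollary~\ref{coro: thm10} (applied with index $n-1$) already yields the size-two attractor $\Gamma_{n-1}$ for every prefix of length in $[U_n,Q_{n-1}]$. Your inductive detour both discards this available tool and leaves the essential step unproved.

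In item~3, sub-case $t_1=t_2\geq 2$, the reduction via Lemma~\ref{lem: atraktory_mocnina} to showing that $\Gamma_n$ attracts $z_n$ is valid, but your case analysis of $z_n=u_{n-1}^{t_1+1}$ is incomplete. You cover factors of length at least $U_{n-1}$ (which straddle a block boundary) and factors that span the interface at offset $t_1U_{n-2}-1$ inside a block; you do not address short factors of $u_{n-1}$ none of whose within-block occurrences touch that interface or the block end. Such factors exist: for $t_1=t_2=2$ the factor $0010010$ of $u_3=0010010000100100001001$ occurs inside a block only at offsets $\{0,\dots,6\}$ and $\{8,\dots,14\}$, avoiding both offset $15=t_1U_2-1$ and offset $21$, so in $z_4=u_3^3$ it is rescued only by a boundary-straddling occurrence around position $U_3-1$ --- a case your dichotomy does not produce. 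The paper avoids this by starting from the known attractor $\Gamma_{n-1}$ of $u_n$ and re-routing only the factors that cross $U_{n-2}-1$ but not $U_{n-1}-1$, using the explicit decompositions \eqref{eq: s_n2} and \eqref{eq: z_n2}. Finally, the small cases $n\in\{2,3\}$, where $z_n\neq u_{n-1}^{t_1+1}$ or the generic decomposition degenerates, are not verified in your item~3; the paper checks them separately.
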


\begin{proof}
The first statement corresponds to Item 1 of Corollary~\ref{coro: thm10}. 

To prove the second statement, we will show that $z_n$ is a power of $u_{n-1}$.
Then by Item~4 of Corollary~\ref{coro: thm10}, the prefix of length $\ell \in [U_n, Z_n]$ has the attractor $\Gamma_{n-1}$. 
For $m=2$, the prefix $z_n$, for $n\geq 2$, has the following form
\begin{equation*}
\begin{split}
    z_n &= u_{n}u_{n-2}^{t_1-t_2}u_{n-3}^{t_2} \\
    &= u_{n-1}^{t_1}u_{n-2}^{t_2} u_{n-2}^{t_1-t_2}u_{n-3}^{t_2}\\
    &=u_{n-1}^{t_1}u_{n-2}^{t_1}u_{n-3}^{t_2}\,.
    \end{split} 
\end{equation*}
By Lemma~\ref{lem: SP_rekurze}, the word $u_{n-2}^{t_1}u_{n-3}^{t_2}$ is a prefix of $u_{n-1}$. Consequently, $z_n$ is a power of $u_{n-1}$.

To show the third statement, applying Proposition~\ref{prop: z}, it suffices to show that for $t_1=t_2$, any prefix of length $\ell \in [Z_n, S_n]$ has the attractor $\Gamma_n=\{U_{n-1}-1, U_n-1\}$.
The prefixes $s_n$ and $z_n$, for $n\geq 4$, have the following form
\begin{equation}\label{eq: s_n2}
\begin{split}
    s_n &= u_{n}u_{n-2}^{t_1-t_2+1}u_{n-3}^{t_2} \\
    &=u_{n-1}^{t_1}u_{n-2}^{t_1+1}u_{n-3}^{t_1}\\
    &=\underbrace{\underbrace{\teckaa{red}{u_{n-2}}u_{n-2}^{t_1-1}\teckaa{red}{u_{n-3}^{t_1}}}_{u_{n-1}} u_{n-1}^{t_1-1}\teckaa{red}{u_{n-2}^{t_1}}}_{u_n}u_{n-2}u_{n-3}^{t_1}\,. 
    \end{split}
\end{equation}
\begin{equation}\label{eq: z_n2}
\begin{split}
 z_n &=\teckaa{red}{u_{n-2}}u_{n-2}^{t_1-1}\underbrace{\teckaa{red}{u_{n-3}^{t_1}} u_{n-1}^{t_1-1}\teckaa{red}{u_{n-2}^{t_1}}}_{y}u_{n-3}^{t_1}\,. 
\end{split} 
\end{equation}
The positions $U_{n-2}-1, \ U_{n-1}-1, \ U_n-1$ are depicted in red in~\eqref{eq: s_n2} and~\eqref{eq: z_n2}.
Each factor $f$ of any prefix of length $\ell \in [Z_n, S_n]$ either crosses the last position $U_{n}-1$ of $\Gamma_n$ or is contained in $u_n$ and crosses $\Gamma_{n-1}=\{U_{n-2}-1, U_{n-1}-1\}$.
If the factor $f$ crosses the first red position $U_{n-2}-1$ and not the second one $U_{n-1}-1$ in the prefix $u_{n-1}=\teckaa{red}{u_{n-2}}u_{n-2}^{t_1-1}\teckaa{red}{u_{n-3}^{t_1}}$, then by Observation~\ref{lem: atraktory_faktor}, $f$ crosses $jU_{n-2}-1$ for all $1\leq j<t_1$. If $f$ moreover crosses $t_1U_{n-2}-1$, then $f$ crosses the last position $U_n-1$ of $\Gamma_n$. If $f$ does not cross $t_1U_{n-2}-1$, then $f$ is a suffix of the word $u_{n-2}^{t_1-1}x$, where $x$ is a prefix of $u_{n-2}$ longer than $u_{n-3}^{t_1}$, i.e., $U_{n-3}^{t_1}<|x|<U_{n-2}$. Since the factor $y$ underlined in~\eqref{eq: z_n2} has the prefix $u_{n-2}=u_{n-3}^{t_1}u_{n-4}^{t_1}$, the factor $f$ is contained in $u_{n-2}^{t_1-1}y$ and crosses the position $U_{n-1}-1$.

Let us finally check that for $n\in\{2,3\}$, each prefix of length $\ell \in [Z_n, S_n]$ has the attractor $\Gamma_n$, too.

For $n=2$, 
$$\begin{array}{rcl}
z_2&=&u_2=\underbrace{0^{t_1}\red{\textbf{1}}\cdots 0^{t_1}1}_{t_1 \times}0^{t_1-1}\red{\textbf{0}}\,,\\
s_2&=&u_2u_0=\underbrace{0^{t_1}\red{\textbf{1}}\cdots 0^{t_1}1}_{t_1 \times}0^{t_1-1}\red{\textbf{0}}0\,.
\end{array}$$
One can easily check that $\Gamma_2=\{U_1-1, U_2-1\}$ (highlighted in red in $z_2$ and $s_2$) is clearly an attractor of both $z_2$ and $s_2$.

For $n=3$, the prefixes $z_3$ and $s_3$ have the following form
\begin{equation*}
\begin{split}
   z_3 &= u_{3}u_{0}^{t_1} \\   
   &= u_{2}^{t_1}u_{1}^{t_1}u_{0}^{t_1}\\
   &=\teckaa{red}{\underbrace{\teckaa{red}{\underbrace{\teckaa{red}{u_1}u_{1}^{t_1-1}u_{0}^{t_1}}_{u_2}}u_{2}^{t_1-1}u_{1}^{t_1}}_{u_3}}u_{0}^{t_1}\,,\\
   s_3&=\teckaa{red}{\underbrace{\teckaa{red}{\underbrace{\teckaa{red}{u_1}u_{1}^{t_1-1}u_{0}^{t_1}}_{u_2}}u_{2}^{t_1-1}u_{1}^{t_1}}_{u_3}}u_1u_{0}^{t_1}\,. 
   \end{split} 
\end{equation*}
As already shown, the prefix $z_3$ has the attractor $\Gamma_2=\{U_1-1, U_2-1\}$. Since $s_3$ is a power of $u_3$ and $U_3<Z_3<S_3$, by Lemma~\ref{lem: atraktory_mocnina}, $\{U_1-1, U_2-1, U_3-1\}$ is an attractor of every prefix of $\uu$ of length $\ell \in [Z_3, S_3]$ (the positions are highlighted in red in $z_3$ and $s_3$). Since $u_1=u_0^{t_1}\tt 1$, we can use Observation~\ref{lem: atraktory_faktor} with $x=u_1^{t_1}u_0^{t_1}=z^{t_1}z'$ with $z=u_1$ (its second item does not apply) and we can deduce that if a factor $f$ has an occurrence in $x$ crossing $|z|-1$, then for each $j$, $1\leq j\leq t_1$, the factor $f$ has an occurrence crossing $j|z|-1$. In other words, if a factor $f$ of the prefix of $\uu$ of length $\ell$ crosses $U_1-1$, but not $U_2-1$, then $f$ also crosses $U_3-1$. Therefore $\Gamma_3=\{U_2-1, U_3-1\}$ is an attractor of the prefix of length $\ell$, too.
% By Lemma~\ref{lem: SP_rekurze}, the word $u_{n-3}^{t_1}u_{n-4}^{t_2}u_{n-5}^{t_3}$ is a prefix of $u_{n-2}$. 
% If $t_2<t_1$, again by Lemma~\ref{lem: SP_rekurze}, the word $u_{n-2}^{t_2+1}$ is a prefix of $u_{n-1}$. Since $z_n$ is a prefix of $u_{n-1}^{t_1}u_{n-2}^{t_2+1}$, we deduce that $z_n$ is a power of $u_{n-1}$.
% If $t_2=t_1$, we have $z_n=u_{n-1}^{t_1}u_{n-2}^{t_1}u_{n-3}^{t_1}u_{n-4}^{t_1}u_{n-5}^{t_3}$.
% For $n=3$, we obtain $z_3=u_{2}^{t_1}u_{1}^{t_1}u_{0}^{t_1}$ and by Lemma~\ref{lem: SP_rekurze}, the word $u_{1}^{t_1}u_{0}^{t_1}$ is a prefix of $u_2$, hence $z_3$ is a power of $u_2$.
% For $n=4$, using again Lemma~\ref{lem: SP_rekurze}, we get $z_4=u_{3}^{t_1}u_{2}^{t_1}u_{1}^{t_1}u_{0}^{t_1}=u_{3}^{t_1}u_{2}^{t_1}u_{1}^{t_1}u_{0}^{t_3}u_0^{t_1-t_3}=u_3^{t_1+1}u_0^{t_1-t_3}$, which is a power of $u_3$.
% For $n\geq 5$, we have $z_n=u_{n-1}^{t_1}u_{n-2}^{t_1}u_{n-3}^{t_1}u_{n-4}^{t_1}u_{n-5}^{t_3}=u_{n-1}^{t_1}u_{n-2}^{t_1}u_{n-3}^{t_1}u_{n-4}^{t_3}u_{n-4}^{t_1-t_3}u_{n-5}^{t_3}=u_{n-1}^{t_1+1}u_{n-4}^{t_1-t_3}u_{n-5}^{t_3}$. By Lemma~\ref{lem: SP_prefixy}, the word $u_{n-4}^{t_1-t_3}u_{n-5}^{t_3}$ is a~prefix of $u_{n-3}$, therefore it is a prefix of $u_{n-1}$. Consequently, $z_n$ is a power of $u_{n-1}$.

It is readily seen that each attractor from Proposition~\ref{prop: binary_ternary} has the size equal to the number of distinct letters contained in the prefix. Consequently, the attractors are minimal. 
\end{proof}

\begin{example}
Let us illustrate the attractors from Proposition~\ref{prop: binary_ternary} on the prefixes of $\uu$ from Definition~\ref{def:simple-Parry}, where $m=2$ and $t_1=t_2=2$. Let us emphasize that Theorem~\ref{thm: affine} cannot be applied here since $t_2>1$. We choose several prefixes of $\uu$ and denote in red the positions of the attractor from Proposition~\ref{prop: binary_ternary}. Notice that $U_2=Z_2$, $|u_3u_0^2|=Z_3$ and $|u_4u_1^2|=Z_4$.
\begin{equation*}
\begin{array}{lcl}
u_0 &=& \red{\textbf{0}}\,, \\
u_1 &=& \red{\textbf{0}}0\red{\textbf{1}}\,, \\
z_2=u_2 &=& \red{\textbf{0}}0\red{\textbf{1}}00100\,, \\
z_2=u_2 &=& 00\red{\textbf{1}}0010\red{\textbf{0}}\,,\\
u_3 &=& 00\red{\textbf{1}}0010\red{\textbf{0}}00100100001001\,,\\
z_3 = u_3u_0^2 &=& 00\red{\textbf{1}}0010\red{\textbf{0}}0010010000100100\,,\\
z_3 = u_3u_0^2 &=& 0010010\red{\textbf{0}}0010010000100\red{\textbf{1}}00\,,\\
u_4 &=& 0010010\red{\textbf{0}}0010010000100\red{\textbf{1}}001001000010010000100100100100\\
&&00100100\,, \\
z_4 = u_4u_1^2 &=& 0010010\red{\textbf{0}}0010010000100\red{\textbf{1}}001001000010010000100100100100\\
&&00100100001001\,,\\
z_4 = u_4u_1^2 &=& 001001000010010000100\red{\textbf{1}}001001000010010000100100100100\\
&&0010010\red{\textbf{0}}001001\,.
\end{array}
\end{equation*}

\end{example}
% \begin{example}
% Let us illustrate the attractors from Proposition~\ref{prop: binary_ternary} on the prefixes of $\uu$ from Example~\ref{pr: SP_priklad}, where $m=3$ and $t_1=2$, $t_2=t_3=1$. We choose several prefixes of $\uu$ and highlight in red the positions of the attractor from Proposition~\ref{prop: binary_ternary}. Notice that $|u_3u_0|=Z_3$ and $|u_4u_1u_0|=Z_4$.
% \begin{equation*}
% \begin{array}{lcl}
% u_0 &=& \red{0}\,, \\
% u_1 &=& \red{0}0\red{1}\,, \\
% u_2 &=& \red{0}0\red{1}0010\red{2}\,, \\
% v_1 &=& \red{0}0\red{1}0010\red{2}00100\,, \\
% u_3 &=& \red{0}0\red{1}0010\red{2}001001020010, \\
% u_3u_0 &=& \red{0}0\red{1}0010\red{2}0010010200100\,, \\
% u_3u_0 &=& 00\red{1}0010\red{2}00100102001\red{0}0\,,\\
% v_2 &=& 00\red{1}0010\red{2}00100102001\red{0}00100102001001\,,\\
% u_4 &=& 00\red{1}0010\red{2}00100102001\red{0}0010010200100102001000100102001\,, \\
% v_3 &=& 00\red{1}0010\red{2}00100102001\red{0}0010010200100102001000100102001001\,, \\
% u_4u_1u_0 &=& 00\red{1}0010\red{2}00100102001\red{0}00100102001001020010001001020010010\,, \\
% u_4u_1u_0 &=& 0010010\red{2}00100102001\red{0}001001020010010200100010010200\red{1}0010\,, \\
% v_4 &=& 0010010\red{2}00100102001\red{0}001001020010010200100010010200\red{1}00100\,, \\
% u_5 &=& 0010010\red{2}00100102001\red{0}001001020010010200100010010200\red{1}0010010200100102001\\
% &&000100102001001020010001001020010010010200100102001000100102\,. \\
% \end{array}
% \end{equation*}
% \end{example}

Let us proceed to a~general alphabet size. First, we state a~theorem with assumptions distinct from Theorem~\ref{thm: affine} guaranteeing that prefixes have canonical minimal attractors.

\begin{theorem}\label{veta: SP_atraktory}
Let $\uu$ be a simple Parry sequence from Definition~\ref{def:simple-Parry}. Assume that
\begin{enumerate}
\item 
$t_i\cdots t_{m-2}(t_{m-1}+1)0^{\omega} \prec_{\text{lex}} t_1t_2 \cdots t_m 0^{\omega}$ for all $i \in \{2, \ldots, m-2\}$; 

\item $t_1 > \max\{t_{m-1}, t_m\}$.
\end{enumerate}
Then the prefixes of $\uu$ have the following attractors:
\begin{itemize} 
\item For each $n \in \mathbb{N}$, $n \leq m-1$, the prefix of $\uu$ of length $\ell \in [U_n, U_{n+1}-1]$ has the attractor $\Gamma_n$. 
\item For each $n \in \mathbb{N}$, $n \geq m$, the prefix of $\uu$ of length $\ell \in [U_n, Z_n]$ has the attractor $\Gamma_{n-1}$.
\item For each $n \in \mathbb{N}$, $n \geq m$, the prefix of $\uu$ of length $\ell \in [Z_n, U_{n+1}]$ has the attractor $\Gamma_n$.
\end{itemize}
Moreover, in all cases, the attractors are minimal. 
\end{theorem}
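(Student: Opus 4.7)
The first bullet of the theorem is simply Item~1 of Corollary~\ref{coro: thm10}, and the third bullet is Item~1 of Proposition~\ref{prop: z} (whose hypothesis $t_1>t_m$ is part of assumption~2). Only the middle bullet needs genuine work, and my plan is to prove it by showing that $z_n$ is itself a power of $u_{n-1}$; granted this, $Q_{n-1}\geq Z_n$, and Item~4 of Corollary~\ref{coro: thm10} applied with $n$ replaced by $n-1$ hands us the attractor $\Gamma_{n-1}$ for every prefix of $\uu$ of length $\ell\in[U_n,Z_n]$.

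To display $z_n$ as a power of $u_{n-1}$, I would substitute $u_n=u_{n-1}^{t_1}u_{n-2}^{t_2}\cdots u_{n-m}^{t_m}$ (Lemma~\ref{lem: SP_rekurze}) into the definition of $z_n$ and merge the adjacent blocks $u_{n-m}^{t_m}u_{n-m}^{t_1-t_m}=u_{n-m}^{t_1}$, obtaining
\begin{equation*}
z_n \;=\; u_{n-1}^{t_1}\cdot u_{n-2}^{t_2}u_{n-3}^{t_3}\cdots u_{n-m+1}^{t_{m-1}}\cdot\bigl(u_{n-m}^{t_1}u_{n-m-1}^{t_2}\cdots u_{n-2m+1}^{t_m}\bigr).
\end{equation*}
The bracketed block equals $u_{n-m+1}$ for $n\geq 2m-1$ (second formula of Lemma~\ref{lem: SP_rekurze}), and equals $u_{n-m+1}$ without its sentinel last letter for $m\leq n<2m-1$ (first formula, together with $u_k=\varepsilon$ for $k<0$). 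Absorbing it raises the exponent of $u_{n-m+1}$ by one, leaving
\begin{equation*}
z_n \;=\; u_{n-1}^{t_1}\cdot W,\qquad W=u_{n-2}^{t_2}u_{n-3}^{t_3}\cdots u_{n-m+2}^{t_{m-2}}u_{n-m+1}^{t_{m-1}+1},
\end{equation*}
possibly minus one trailing letter in the edge range. It then remains to verify, via Lemma~\ref{lem: SP_prefixy}, that $W$ is a prefix of $u_{n-1}$, i.e.\ that the exponent tuple $(t_2,t_3,\ldots,t_{m-2},t_{m-1}+1)$ meets the required lexicographic conditions.

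These conditions are exactly what the two hypotheses supply: the shifts $i\in\{1,\ldots,m-3\}$ read $t_{i+1}\cdots t_{m-2}(t_{m-1}+1)0^\omega\prec_{\text{lex}}t_1\cdots t_m0^\omega$ and are instances of assumption~1 at index $i+1\in\{2,\ldots,m-2\}$; the final shift $i=m-2$ reads $(t_{m-1}+1)0^\omega\prec_{\text{lex}}t_1\cdots t_m0^\omega$, which follows from assumption~2 (giving $t_{m-1}+1\leq t_1$) together with $t_m\geq 1$ (which resolves the equality case via a strictly greater tail on the right). The small alphabets fall out easily: for $m=2$ one gets $z_n=u_{n-1}^{t_1+1}$ directly, and for $m=3$ assumption~1 is vacuous while $t_1>t_2$ from assumption~2 suffices. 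Minimality of each attractor is automatic, since $|\Gamma_{n-1}|$ matches the number of distinct letters occurring in any prefix of $\uu$ of length at least $U_n$. I expect the most delicate step to be the bookkeeping for the edge range $m\leq n<2m-1$: one must check that dropping a trailing sentinel letter from $W$ still yields a prefix of $u_{n-1}$ (it does, trivially, since a prefix of a prefix is a prefix), so that the truncated $z_n$ still witnesses $Q_{n-1}\geq Z_n$.
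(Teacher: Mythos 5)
Your proposal is correct and follows essentially the same route as the paper: the first and third bullets are dispatched via Corollary~\ref{coro: thm10} and Proposition~\ref{prop: z} respectively, and the middle bullet is handled by rewriting $z_n$ through Lemma~\ref{lem: SP_rekurze}, absorbing $u_{n-m}^{t_1}u_{n-m-1}^{t_2}\cdots u_{n-2m+1}^{t_m}$ into the exponent of $u_{n-m+1}$, and invoking Lemma~\ref{lem: SP_prefixy} with exactly the exponent tuple $(t_2,\ldots,t_{m-2},t_{m-1}+1)$ to conclude that $z_n$ is a power of $u_{n-1}$. Your extra care with the edge range $m\leq n<2m-1$ and the small alphabets $m\in\{2,3\}$ only makes explicit what the paper leaves implicit.
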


\begin{proof}
The first statement is a direct consequence of Corollary~\ref{coro: thm10}. The third statement, using the assumption $t_1>t_m$, follows from Item 1 of Proposition~\ref{prop: z}.
It remains to prove the second statement.
Consider the prefix $z_n$ of $\uu$, where $n\geq m$. We will show that $z_n$ is a power of $u_{n-1}$. Then by Item~4 of Corollary~\ref{coro: thm10}, the prefix of length $\ell \in [U_n, Z_n]$ has the attractor $\Gamma_{n-1}$.

It suffices to show that the prefix $z_n$ is a~power of $u_{n-1}$.
By Lemma~\ref{lem: SP_rekurze},
\begin{equation*}
\begin{split}
    z_n &= u_{n}u_{n-m}^{t_1-t_m}u_{n-m-1}^{t_2} \cdots  u_{n-2m+1}^{t_m} \\
    &= u_{n-1}^{t_1}u_{n-2}^{t_2} \cdots u_{n-m+1}^{t_{m-1}}u_{n-m}^{t_m}u_{n-m}^{t_1-t_m}u_{n-m-1}^{t_2} \cdots   u_{n-2m+1}^{t_m} \\
    &=u_{n-1}^{t_1}u_{n-2}^{t_2} \cdots u_{n-m+1}^{t_{m-1}}u_{n-m}^{t_1}u_{n-m-1}^{t_2} \cdots  u_{n-2m+1}^{t_m}\,.
\end{split} 
\end{equation*}
Using Lemma~\ref{lem: SP_rekurze}, the word $u_{n-m}^{t_1}u_{n-m-1}^{t_2} \cdots  u_{n-2m+1}^{t_m}$ is a~prefix of $u_{n-m+1}$. Consequently, $z_n$ is a~prefix of $u_{n-1}^{t_1}u_{n-2}^{t_2} \cdots u_{n-m+1}^{t_{m-1}+1}$. 
The lexicographic condition $t_i \cdots t_{m-2}(t_{m-1}+1)0^{\omega} \prec_{\text{lex}} t_1t_2 \cdots t_m0^{\omega}$ for all $i \in \{2, \ldots, m-2\}$ and $t_{m-1}<t_1$ implies that $u_{n-2}^{t_2} \cdots u_{n-m+1}^{t_{m-1}+1}$ is a~prefix of $u_{n-1}$ by Lemma~\ref{lem: SP_prefixy}. Thus $z_n$ is a~power of $u_{n-1}$.

It is easy to see that for each prefix, its attractor from Theorem~\ref{veta: SP_atraktory} has the size equal to the number of distinct letters contained in the prefix. Consequently, the attractors are minimal. 
\end{proof}
\begin{example}
Let us illustrate the attractors from Theorem~\ref{veta: SP_atraktory} on the prefixes of $\uu$ from Definition~\ref{def:simple-Parry}, where $m=3$ and $t_1=3$, $t_2=0$, $t_3=2$. Let us emphasize that Theorem~\ref{thm: affine} cannot be applied here since $t_3>1$. We choose several prefixes of $\uu$ and denote in red the positions of the attractor from Theorem~\ref{veta: SP_atraktory}. Notice that $|u_3u_0|=Z_3$ and $|u_4u_1|=Z_4$.

\begin{equation*}
\begin{array}{lcl}
u_0 &=& \red{\textbf{0}}\,,\\
u_1 &=& \red{\textbf{0}}00\red{\textbf{1}}\,,\\
u_2 &=& \red{\textbf{0}}00\red{\textbf{1}}00010001\red{\textbf{2}}\,,\\
u_3 &=& \red{\textbf{0}}00\red{\textbf{1}}00010001\red{\textbf{2}}0001000100012000100010001200\,,\\
z_3 = u_3u_0 &=& \red{\textbf{0}}00\red{\textbf{1}}00010001\red{\textbf{2}}00010001000120001000100012000\,,\\
z_3 = u_3u_0 &=& 000\red{\textbf{1}}00010001\red{\textbf{2}}000100010001200010001000120\red{\textbf{0}}0\,,\\
u_4 &=& 000\red{\textbf{1}}00010001\red{\textbf{2}}000100010001200010001000120\red{\textbf{0}}00010001000\\
&&1200010001000120001000100012000001000100012000100010\\
&&001200010001000120000010001\,, \\
z_4 = u_4u_1 &=& 000\red{\textbf{1}}00010001\red{\textbf{2}}000100010001200010001000120\red{\textbf{0}}00010001000\\
&&1200010001000120001000100012000001000100012000100010\\
&&0012000100010001200000100010001\,, \\
z_4 = u_4u_1 &=& 000100010001\red{\textbf{2}}000100010001200010001000120\red{\textbf{0}}00010001000\\
&&1200010001000120001000100012000001000100012000100010\\
&&00120001000100012000001000\red{\textbf{1}}0001\,.
\end{array}
\end{equation*}
\end{example}

\begin{example}\label{ex: atraktory2}
Here, we want to illustrate that the assumptions on the parameters $t_1, t_2,\dots, t_m$ from Theorem~\ref{veta: SP_atraktory} cannot be skipped. 
Consider $m=4$ and $t_1=2$, $t_2=1$, $t_3=2$ and $t_4=1$, i.e., the simplest case where neither assumptions of Theorem~\ref{thm: affine} nor assumptions of Theorem~\ref{veta: SP_atraktory} are met.

In this case, $z_6 = p_6 = u_6u_2u_1u_0^2$ and $P_6=U_6+12$ and $Z_6=U_6+13$. 

We will explain that the prefix $v$ of length $U_6+9 \in [U_6, P_6]\subset [U_6, Z_6]$ does not have the attractor $\Gamma_5$. The set $\Gamma_5$ is the attractor of $u_6$ and it is pointed out in red in the prefix $u_6$. It is easy to check that the underlined suffix $(0010010200100102001003)^2 00100102001001020$ of $v$ does not cross the set $\Gamma_5$.
The set $\Gamma_6$ is denoted in red in $v$. Again, it is not an attractor of $v$ since the underlined prefix of $v$ does not cross $\Gamma_6$.
It is not difficult to show that no minimal attractor of $v$ is canonical.
\begin{equation*}
\begin{array}{lcl}
u_{6}&=&0010010\red{\textbf{2}}0010010200100\red{\textbf{3}}001001020010010200100300100102001001\red{\textbf{0}}\\
&&00100102001001020010030010010200100102001003001001020010010\\
&&0010010200100102001003001001020010010200\red{\textbf{1}}001001020010010200\\
&&10030010010200100102001003001001020010010001001020010010200\\
&&10030010010200100102001003001001020010010001001020010010200\\
&&10030010010200100102001001001020010010200100300100102001001\\
&&02001003001001020010010001001020010010200100300100102001001\\
&&0200100300100102\,,\\
v&=&\underline{001001020010010200}100\red{\textbf{3}}001001020010010200100300100102001001\red{\textbf{0}}\\
&&00100102001001020010030010010200100102001003001001020010010\\
&&0010010200100102001003001001020010010200\red{\textbf{1}}001001020010010200\\
&&10030010010200100102001003001001020010010001001020010010200\\
&&10030010010200100102001003001001020010010001001020010010200\\
&&10030010010200100102001001001020010010200100300100102001001\\
&&02001003001001020010010\underline{001001020010010200100300100102001001}\\
&&\underline{020010030010010\red{\textbf{2}}001001020}\,.
\end{array}
\end{equation*}
\end{example}

In the following theorem, we introduce minimal attractors of prefixes of simple Parry sequences where no additional condition is imposed on the parameters.
\begin{theorem}\label{veta: SP_atraktory2}
Let $\uu$ be a simple Parry sequence from Definition~\ref{def:simple-Parry}. 
Denote $$k=\min\bigl\{j \in \{1,\dots, {m-1}\}\ : \ t_{m-j}\neq 0\bigr\}\,.$$ 
\begin{itemize}
\item
For each $n \in \mathbb{N}$, $n \leq m-1$, the prefix of $\uu$ of length $\ell \in [U_n, U_{n+1}-1]$ has the attractor $\Gamma_n$.
\item The prefix $u_m$ of length $\ell=U_m$ has the attractor $\Gamma_{m-1}$.
\item 
For each $n \in \mathbb{N}$, $n \geq m$, consider the prefix $u$ of $\uu$ of length $\ell \in [Z_n, U_{n+1}]$,
\begin{enumerate}
\item if $t_1>t_m$, then $u$ has the attractor $\Gamma_{n}$;
\item if $t_1=t_m$, then 
\begin{itemize}
\item if $u$ is of length $\ell \in [Z_n, S_n]$, then $u$ has the attractor $$\Gamma_{n-1}\cup \{U_n-(t_m-1)U_{n-m}-1\}\setminus \{U_{n-m}-1\}\,;$$
\item if $u$ is of length $\ell \in [S_n, U_{n+1}]$, then $u$ has the attractor $\Gamma_{n}$.
\end{itemize}
\end{enumerate}
\item 
For each $n \in \mathbb{N}$, $n \geq m$, consider the prefix $u$ of $\uu$ of length $\ell \in [U_n, Z_n]$, 
\begin{enumerate}
\item if $t_1\geq 2$, then $u$ has the attractor
$$\Gamma_{n-1}\cup\{U_n-U_{n-m+k}-(t_m-1)U_{n-m}-1\}\setminus\{U_{n-m}-1\}\,;$$
\item if $t_1=1$ and $t_{m-k}0^{k-1}t_m 0^{\omega} \prec_{lex}t_1\cdots t_{k+1} 0^{\omega}$, then $u$ has the attractor
$$\Gamma_{n-1}\cup\{U_n-U_{n-m+k}-1\}\setminus\{U_{n-m}-1\}\,;$$
\item if $t_1=1$ and $t_{m-k}0^{k-1}t_m =t_1\cdots t_{k+1}$, then
\begin{enumerate}
\item if $\ell\leq U_n+U_{n-m+k+1}-U_{n-m+k}-U_{n-m}$, then $u$ has the attractor
$$\Gamma_{n-1}\cup\{U_n-U_{n-m+k}-1\}\setminus\{U_{n-m}-1\}\,;$$
\item if $\ell\geq U_n+U_{n-m+k+1}-U_{n-m+k}-U_{n-m}$, then $u$ has the attractor
$$\Gamma_{n-1}\cup\{U_n-U_{n-m}-1\}\setminus \{U_{n-m+k}-1\}\,.$$
\end{enumerate}
\end{enumerate}

% \begin{enumerate}
% \item $u=u_nx$, where $u_{n-m+k}u_{n-m}^{t_m}x$ is a prefix of $u_{n-m+k+1}$, and $u$ has the attractor 
% $$\Gamma_{n-1}\cup\{U_n-U_{n-m+k}-(t_m-1)U_{n-m}-1\}\setminus\{U_{n-m}-1\}\,;$$
% \item $u=u_nx$, where $u_{n-m+k}u_{n-m}^{t_m}x$ has the prefix $u_{n-m+k+1}$, and $u$ has the attractor 
% $$\Gamma_{n-1}\cup\{U_n-t_m U_{n-m}-1\}\setminus \{U_{n-m+k}-1\}\,.$$
% \end{enumerate}

\end{itemize}

Moreover, in all cases, the attractors are minimal. 
\end{theorem}
\begin{proof}
The first two statements follow from Item~1 and Item~3 of Corollary~\ref{coro: thm10}.
The third one follows from Proposition~\ref{prop: z}.
Assume $n\geq m$. We want to confirm the form of attractors for every prefix $u$ of $\uu$ of length $\ell \in [U_n, Z_n]$.
Let us explain that every such prefix $u$ falls into one of the following two categories:
\begin{enumerate}
\item $u=u_nx$, where $u_{n-m+k}u_{n-m}^{t_m}x$ is a prefix of $u_{n-m+k+1}$,
\item $u=u_nx$, where $u_{n-m+k}u_{n-m}^{t_m}x$ has the prefix $u_{n-m+k+1}$.
\end{enumerate} 
Since $z_n=u_{n}u_{n-m}^{t_1-t_m}u_{n-m-1}^{t_2} \cdots u_{n-2m+1}^{t_m}$, the factor $u_{n-m}^{t_m}x$ is a prefix of $u_{n-m}^{t_1}u_{n-m-1}^{t_2} \cdots u_{n-2m+1}^{t_m}$, which is a prefix of $u_{n-m+1}$ by Lemma~\ref{lem: SP_rekurze}. To sum up, $u_{n-m+k}u_{n-m}^{t_m}x$ is a prefix of $u_{n-m+k}u_{n-m+1}$.

\begin{enumerate}
\item[a)] If $t_1\geq 2$ or $t_1=1$ and $t_{m-k}0^{k-1}t_m 0^{\omega} \prec_{lex}t_1\cdots t_{k+1} 0^{\omega}$, then $u_{n-m+k}u_{n-m+1}$ is a prefix of $u_{n-m+k+1}$ by Lemma~\ref{lem: SP_prefixy}. (Notice that under the assumption $t_1=1$, we have by the condition~\eqref{ParryRenyi} on the Rényi expansion of unity that $t_i\leq 1$ for all $i$ and the strict lexicographic inequality means that $k\geq 2$ and $10^{k-2}10^\omega\preceq_{lex}t_1\cdots t_{k+1} 0^{\omega}$.) Since $u_{n-m+k}u_{n-m}^{t_m}x$ is a prefix of $u_{n-m+k}u_{n-m+1}$, the word $u_{n-m+k}u_{n-m}^{t_m}x$ is a prefix of $u_{n-m+k+1}$ for every $\ell \in [U_n, Z_n]$.
\item[b)] If $t_1=1$ and $t_{m-k}0^{k-1}t_m =t_1\cdots t_{k+1}=10^{k-1}1$, then $u_{n-m+k}u_{n-m+1}$ has the prefix $u_{n-m+k+1}$. The explanation follows. By the condition on $t_1,\dots, t_m$, the form of $u_{n-m+k+1}$ reads
$$u_{n-m+k+1}=u_{n-m+k}u_{n-m}u_{n-m-1}^{t_{k+2}}\cdots u_{n-2m+k+1}^{t_m}\,.$$
By Lemma~\ref{lem: SP_prefixy}, the word $u_{n-m}u_{n-m-1}^{t_{k+2}}\cdots u_{n-2m+k+1}^{t_m}$ is a prefix of $u_{n-m+1}$, therefore indeed $u_{n-m+k+1}$ is a prefix of $u_{n-m+k}u_{n-m+1}$. 
Consequently, in this last case, there exists $L \in [U_n, Z_n]$ such that $u_{n-m+k}u_{n-m}^{t_m}x=u_{n-m+k}u_{n-m}x$ is a prefix of $u_{n-m+k+1}$ for all $\ell\leq L$ and $u_{n-m+k}u_{n-m}x$ has the prefix $u_{n-m+k+1}$ for all $\ell\geq L$. The reader can easily verify that $L=U_n+U_{n-m+k+1}-U_{n-m+k}-U_{n-m}$.
\end{enumerate}

\begin{enumerate}
\item Assume $u=u_nx$, where $u_{n-m+k}u_{n-m}^{t_m}x$ is a prefix of $u_{n-m+k+1}$. We will prove that $u$ has the attractor 
$$\Gamma=\Gamma_{n-1}\cup\{U_n-U_{n-m+k}-(t_m-1)U_{n-m}-1\}\setminus\{U_{n-m}-1\}\,.$$
In particular, if $t_1=1$, then $t_m=1$ and the attractor simplifies to  $$\Gamma=\Gamma_{n-1}\cup\{U_n-U_{n-m+k}-1\}\setminus\{U_{n-m}-1\}\,.$$
Let us express the prefix $u=u_nx$ in a~handy form (on the third line we use the definition of $k$)

\begin{equation}\label{eq:z2}
\begin{split}
     u&=u_{n}x\\
    &= u_{n-1}^{t_1}u_{n-2}^{t_2} \cdots u_{n-m+1}^{t_{m-1}}u_{n-m}^{t_m}x\\
    &=\underbrace{\underbrace{\underbrace{\teckaa{red}{u_{n-m}}u_{n-m}^{t_1-1}u_{n-m-1}^{t_2} \cdots \teckaa{red}{u_{n-2m+1}^{t_m}}}_{u_{n-m+1}}\teckaa{red}{\cdots}}_{u_{n-1}} u_{n-1}^{t_1-1}u_{n-2}^{t_2}\cdots u_{n-m+k}^{t_{m-k}} {u_{n-m}^{t_m}}}_{u_n} x\\
    &=\underbrace{\teckaa{red}{u_{n-m+1}}\teckaa{red}{\cdots} }_{u_{n-1}}u_{n-1}^{t_1-1}\cdots u_{n-m+k}^{t_{m-k}-1}\underbrace{\teckaa{red}{u_{n-m}}u_{n-m}^{t_1-1}u_{n-m-1}^{t_2}\cdots u_{n-2m+1}^{t_m}\cdots}_{u_{n-m+k}}u_{n-m}^{t_m}x\,.
\end{split}
\end{equation}
The prefix $u_n$ has the attractor $\Gamma_{n-1}=\{U_{n-m}-1, U_{n-m+1}-1, \dots, U_{n-1}-1 \}$ by Item 2 of Corollary~\ref{coro: thm10}, which is highlighted in red on the penultimate line. 
We will explain that the prefix $u=u_nx$ of $z_n$ has the attractor $\Gamma$ that is obtained from $\Gamma_{n-1}$ by leaving out the position $U_{n-m}-1$ and adding the position $U_n-U_{n-m+k}-(t_m-1)U_{n-m}-1$ ($\Gamma$ is denoted in red on the last line of~\eqref{eq:z2}):
If $f$ is a~factor of the suffix $u_{n-m+k}u_{n-m}^{t_m}x$ of $u_nx$, then $f$ is, by assumption, a~factor of $u_{n-m+k+1}$, i.e., $f$ occurs in $u_n$.
It follows that every factor $f$ of the prefix $u$ is either contained in the prefix $u_n$ and crosses $\Gamma_{n-1}$ or crosses the last position of $\Gamma$, i.e., the position $U_n-U_{n-m+k}-(t_m-1)U_{n-m}-1$.
Moreover, every factor of $u_n$ that crosses the position $U_{n-m}-1$ and not $U_{n-m+1}-1$ has also an occurrence containing the position $U_n-U_{n-m+k}-(t_m-1)U_{n-m}-1$, i.e., the last position of $\Gamma$ (see the penultimate and the last line of~\eqref{eq:z2}).

\item Assume $u=u_nx$, where $u_{n-m+k}u_{n-m}^{t_m}x$ has the prefix $u_{n-m+k+1}$. We have already inspected that this happens only for $t_1=1$ and $t_{m-k}0^{k-1}t_m =t_1\cdots t_{k+1}=10^{k-1}1$. We will prove that $u$ has the attractor 
$$\Gamma=\Gamma_{n-1}\cup\{U_n-U_{n-m}-1\}\setminus \{U_{n-m+k}-1\}\,.$$
Let us express the prefix $u$ in another handy form (on the second line, we simplify the expression using the knowledge $t_1=t_m=1$, on the third line, using the definition of $k$)
\begin{equation}\label{eq:z3}
\begin{split}
    u &=u_{n}x\\
    &= u_{n-1}u_{n-2}^{t_2} \cdots u_{n-m+1}^{t_{m-1}}u_{n-m}x \\
    &=\underbrace{\underbrace{\underbrace{\underbrace{\teckaa{red}{u_{n-m}} \cdots \teckaa{red}{}}_{u_{n-m+k}}\cdots \teckaa{red}{}}_{u_{n-m+k+1}}\teckaa{red}{\cdots}}_{u_{n-1}} u_{n-2}^{t_2}\cdots u_{n-m+k} {u_{n-m}}}_{u_n} x\\
    &=\underbrace{\underbrace{\underbrace{\underbrace{\teckaa{red}{u_{n-m}} \cdots {}}_{u_{n-m+k}}\cdots \teckaa{red}{}}_{u_{n-m+k+1}}\teckaa{red}{\cdots}}_{u_{n-1}} u_{n-2}^{t_2}\cdots \teckaa{red}{u_{n-m+k}} {u_{n-m}}}_{u_n} x\,.
\end{split}
\end{equation}
The prefix $u_n$ has the attractor $\Gamma_{n-1}=\{U_{n-m}-1, U_{n-m+1}-1, \dots, U_{n-1}-1 \}$ by Item 2 of Corollary~\ref{coro: thm10}, which is highlighted in red on the penultimate line. 
We will explain that the prefix $u=u_nx$ of $z_n$ has the attractor $\Gamma$ that is obtained from $\Gamma_{n-1}$ by leaving out the position $U_{n-m+k}-1$ and adding the position $U_n-U_{n-m}-1$ ($\Gamma$ is denoted in red on the last line of \eqref{eq:z3}):
If $f$ is a~factor of the suffix $u_{n-m}x$, then $f$ is a factor of $u_{n-m+1}$ by our very first observation, hence $f$ is a factor of $u_{n}$.
It follows that every factor $f$ of the prefix $u$ is either contained in the prefix $u_n$ and crosses $\Gamma_{n-1}$ or crosses the last position of $\Gamma$, i.e., the position $U_n-U_{n-m}-1$.
Moreover, every factor of $u_n$ that crosses the position $U_{n-m+k}-1$ and not $U_{n-m+k+1}-1$ is contained in $u_{n-m+k+1}$, therefore $f$ is also contained in $u_{n-m+k}u_{n-m}x$ and crosses the position $U_n-U_{n-m}-1$, i.e., the last position of $\Gamma$ (see~\eqref{eq:z3}).
\end{enumerate}

Let us point out that for each prefix, its attractor from Theorem~\ref{veta: SP_atraktory2} has the size equal to the number of distinct letters contained in the prefix. Consequently, the attractors are minimal. 

\end{proof}

% Using the proof of Theorem~\ref{veta: SP_atraktory2}, we deduce the following particular case.

% \begin{corollary}\label{coro: special cases}
% Let $\uu$ be a simple Parry sequence from Definition~\ref{def:simple-Parry}. 
% Denote $$k=\min\{j \in \{1,\dots, {m-1}\}\ : \ t_{m-j}\not =0\}\,.$$ 
% Assume $t_1\geq 2$ or $t_1=1$ and $t_{m-k}0^{k-1}t_m0^\omega\prec_{lex}t_1\cdots t_{k+1}0^\omega$.
% \begin{itemize}
% \item
% For each $n \in \mathbb{N}$, $n \leq m-1$, the prefix of $\uu$ of length $\ell \in [U_n, U_{n+1}-1]$ has the attractor $\Gamma_n$. 
% \item 
% For each $n \in \mathbb{N}$, $n \geq m$,
% \begin{enumerate}
% \item if $t_1>t_m$, then every prefix of length $\ell \in [Z_n, U_{n+1}]$ has the attractor $\Gamma_{n}$;
% \item if $t_1=t_m$, then 
% \begin{itemize}
% \item every prefix of length $\ell \in [Z_n, T_n]$ has the attractor $$\Gamma_{n-1}\cup \{U_n-(t_m-1)U_{n-m}-1\}\setminus \{U_{n-m}-1\}\,;$$
% \item every prefix of length $\ell \in [T_n, U_{n+1}]$ has the attractor $\Gamma_{n}$.
% \end{itemize}
% \end{enumerate}
% \item 
% For each $n \in \mathbb{N}$, $n \geq m$, the prefix $u$ of $\uu$ of length $\ell \in [U_n, Z_n]$
% has the attractor
% $$\Gamma_{n-1}\cup\{U_n-U_{n-m+k}-(t_m-1)U_{n-m}-1\}\setminus\{U_{n-m}-1\}\,.$$
% \end{itemize}
% \end{corollary}

\begin{example}
Let us illustrate the attractors of prefixes of $\uu$ from Example~\ref{ex: atraktory2}, where $m=4$ and $t_1=2$, $t_2=1$, $t_3=2$ and $t_4=1$. Recall that neither assumptions of Theorem~\ref{thm: affine} nor assumptions of Theorem~\ref{veta: SP_atraktory} are met. We apply Theorem~\ref{veta: SP_atraktory2}. 
The attractors of prefixes from Theorem~\ref{veta: SP_atraktory2} are highlighted in red. For the prefixes of length smaller than $U_4$, the attractors from Theorem~\ref{veta: SP_atraktory} and Theorem~\ref{veta: SP_atraktory2} coincide. The prefixes $u_n$ and $z_n$, for $n\geq 4$, have two different attractors by Theorem~\ref{veta: SP_atraktory2}. 
% $u_4$ has the attractor $\Gamma_3$ and the prefixes of length $\ell \in [U_4, Z_4]=[U_4, U_4+1]$ have also the attractor $\Gamma = \{U_1-1, U_2-1, U_3-1, U_4-U_1-1\}$. Since $t_1>t_4$, every prefix of length $\ell \in [Z_n, U_{n+1}]$ has the attractor $\Gamma_{n}$ for $n\geq 4$. For instance, the prefixes $z_4, u_5$ have the attractor $\Gamma_4$ and the prefixes $z_5, v_5, u_6 \in [Z_5, U_6]$ have the attractor $\Gamma_5$. 
% The prefix $z_5=u_5u_1u_0$ has also the attractor $\Gamma=\{U_2-1, U_3-1, U_4-1, U_5-U_2-1\}$. 

The length of the factor $v$ from Example~\ref{ex: atraktory2} satisfies $|v|\in [U_6, Z_6]$. By the proof of Theorem~\ref{veta: SP_atraktory2}, as $t_1\geq 2$, the attractor of $v$ equals $\Gamma=\{U_3-1, U_4-1, U_5-1, U_6-U_3-1\}$; see the picture below.
Let us repeat the argument why $\Gamma$ is indeed an attractor of $v$. 
Each factor $f$ of $v$ either crosses the last position of $\Gamma$ or is contained in $u_6$ and crosses $\Gamma_5$. If $f$ occurs in the prefix $x=0010010\red{\textbf{2}}0010010200100$ of length $U_3-1$ and crosses the red position $U_2-1$, then $f$ clearly has an occurrence in $v$ containing the last position of $\Gamma$. We underline both above-mentioned occurrences of $x$.
$$
\begin{array}{lcl}
u_0 &=& \red{\textbf{0}}\,, \\
u_1 &=& \red{\textbf{0}}0\red{\textbf{1}}\,, \\
u_2 &=& \red{\textbf{0}}0\red{\textbf{1}}0010\red{\textbf{2}}\,, \\
v_1 &=& \red{\textbf{0}}0\red{\textbf{1}}0010\red{\textbf{2}}00100\,, \\
u_3 &=& \red{\textbf{0}}0\red{\textbf{1}}0010\red{\textbf{2}}0010010200100\red{\textbf{3}}\,, \\
v_2 &=& \red{\textbf{0}}0\red{\textbf{1}}0010\red{\textbf{2}}0010010200100\red{\textbf{3}}001001020\,,\\
u_4 &=& \red{\textbf{0}}0\red{\textbf{1}}0010\red{\textbf{2}}0010010200100\red{\textbf{3}}0010010200100102001003001001020010010\,, \\
u_4 &=& {0}0\red{\textbf{1}}0010\red{\textbf{2}}0010010200100\red{\textbf{3}}001001020010010200100300100102001\red{\textbf{0}}010\,, \\
z_4=u_4u_0 &=& {0}0\red{\textbf{1}}0010\red{\textbf{2}}0010010200100\red{\textbf{3}}001001020010010200100300100102001\red{\textbf{0}}0100\,, \\
z_4=u_4u_0 &=& {0}0\red{\textbf{1}}0010\red{\textbf{2}}0010010200100\red{\textbf{3}}001001020010010200100300100102001{0}01\red{\textbf{0}}0\,, \\
v_3 &=& 00\red{\textbf{1}}0010\red{\textbf{2}}0010010200100\red{\textbf{3}}001001020010010200100300100102001001\red{\textbf{0}}0\\
&&010010200100102001003001001020010\,,\\
u_5 &=& 00\red{\textbf{1}}0010\red{\textbf{2}}0010010200100\red{\textbf{3}}001001020010010200100300100102001001\red{\textbf{0}}\\
&&00100102001001020010030010010200100102001003001001020010010\\
&&0010010200100102001003001001020010010200{1}\,,\\
u_5 &=& 00{1}0010\red{\textbf{2}}0010010200100\red{\textbf{3}}001001020010010200100300100102001001\red{\textbf{0}}\\
&&00100102001001020010030010010200100102001003001001020010010\\
&&00100102001001020010030010010200\red{\textbf{1}}0010200{1}\,,\\
z_5 =u_5u_1u_0&=& 00{1}0010\red{\textbf{2}}0010010200100\red{\textbf{3}}001001020010010200100300100102001001\red{\textbf{0}}\\
&&00100102001001020010030010010200100102001003001001020010010\\
&&00100102001001020010030010010200\red{\textbf{1}}0010200{1}0010\,,\\
z_5=u_5u_1u_0 &=& 00{1}0010\red{\textbf{2}}0010010200100\red{\textbf{3}}001001020010010200100300100102001001\red{\textbf{0}}\\
&&00100102001001020010030010010200100102001003001001020010010\\
&&00100102001001020010030010010200{1}0010200\red{\textbf{1}}0010\,,\\
v_{5}&=&0010010\red{\textbf{2}}0010010200100\red{\textbf{3}}001001020010010200100300100102001001\red{\textbf{0}}\\
&&00100102001001020010030010010200100102001003001001020010010\\
&&0010010200100102001003001001020010010200\red{\textbf{1}}001001020010010200\\
&&1003001001020010010200100300100102001001000100102001\,,\\
u_{6}&=&0010010\red{\textbf{2}}0010010200100\red{\textbf{3}}001001020010010200100300100102001001\red{\textbf{0}}\\
&&00100102001001020010030010010200100102001003001001020010010\\
&&0010010200100102001003001001020010010200\red{\textbf{1}}001001020010010200\\
&&10030010010200100102001003001001020010010001001020010010200\\
&&10030010010200100102001003001001020010010001001020010010200\\
&&10030010010200100102001001001020010010200100300100102001001\\
&&02001003001001020010010001001020010010200100300100102001001\\
&&0200100300100102\,,\\
u_{6}&=&0010010{2}0010010200100\red{\textbf{3}}001001020010010200100300100102001001\red{\textbf{0}}\\
&&00100102001001020010030010010200100102001003001001020010010\\
&&0010010200100102001003001001020010010200\red{\textbf{1}}001001020010010200\\
&&10030010010200100102001003001001020010010001001020010010200\\
&&10030010010200100102001003001001020010010001001020010010200\\
&&10030010010200100102001001001020010010200100300100102001001\\
&&0200100300100102001001000100102001001020010030010010\red{\textbf{2}}001001\\
&&0200100300100102\,,\\
\end{array}
$$
$$
\begin{array}{rcl}
v&=&\underline{001001020010010200100}\red{\textbf{3}}001001020010010200100300100102001001\red{\textbf{0}}\\
&&00100102001001020010030010010200100102001003001001020010010\\
&&0010010200100102001003001001020010010200\red{\textbf{1}}001001020010010200\\
&&10030010010200100102001003001001020010010001001020010010200\\
&&10030010010200100102001003001001020010010001001020010010200\\
&&10030010010200100102001001001020010010200100300100102001001\\
&&020010030010010200100100010010200100102001003\underline{0010010\red{\textbf{2}}001001}\\
&&\underline{0200100}300100102001001020\,.
\end{array}
$$
\end{example}

\begin{example}
Let us illustrate the attractors of prefixes of the simple Parry sequence with parameters $m=5$ and $t_1=t_2=1$, $t_3=0$, $t_4=t_5=1$. We can use Theorem~\ref{veta: SP_atraktory2} with $k=1$. The prefix $u_{9}$ has the attractor $\Gamma_{8}$; the positions of $\Gamma_{8}$ are depicted in red in $u_{9}$. Consider the prefixes $v_1, v_2\in [U_{9}, Z_{9}]=[U_{9}, U_{9}+10]$. 

Since $v_1=u_90102$ and $|v_1|=U_9+4\leq U_9+6=U_9+U_6-U_5-U_4$, by Theorem~\ref{veta: SP_atraktory2}, the prefix $v_1$ has the attractor $\Gamma=\{U_5-1, U_6-1, U_7-1, U_8-1, U_{9}-U_{5}-1\}$; again highlighted in red in $v_1$.
Let us repeat the argument why $\Gamma$ is indeed an attractor of $v_1$. 
Each factor $f$ of $v_1$ either crosses the last position of $\Gamma$ or is contained in $u_{9}$ (the underlined suffix of $v_1$ is equal to the underlined prefix of $v_1$) and crosses $\Gamma_{8}$. If $f$ occurs in the prefix $z=010201301020\red{4}010201301$ (depicted in blue) of length $U_5-1$ and crosses the red position $U_4-1$, then $f$ clearly has an occurrence in $v_1$ containing the last position of $\Gamma$ (the corresponding occurrence of $z$ is also depicted in blue).

Since $v_2=u_9010201301$ and $|v_2|=U_9+9>U_9+6=U_9+U_6-U_5-U_4$, by Theorem~\ref{veta: SP_atraktory2}, the prefix $v_2$ has the attractor $\widehat\Gamma=\{U_4-1, U_6-1, U_7-1, U_8-1, U_{9}-U_{4}-1\}$; again denoted in red in $v_2$.
Let us repeat the argument why $\widehat\Gamma$ is indeed an attractor of $v_2$. 
Each factor $f$ of $v_2$ either crosses the last position of $\widehat\Gamma$ or is contained in $u_{9}$ (the suffix $w$ of $v_2$ is at the same time a~prefix of $v_2$) and crosses $\Gamma_{8}$. If $f$ occurs in the prefix of length $U_6-1$, i.e., in $y=0102013010204010201301\red{0}010201301020401020$ (depicted in gray), and crosses the red position $U_5-1$, then $f$ has an occurrence in $v_2$ containing the last position of $\widehat\Gamma$ (the corresponding occurrence of $y$ is also depicted in gray).

The assumptions of Theorems~\ref{thm: affine} and~\ref{veta: SP_atraktory} are not satisfied. On the one hand, the prefix $v_1$ has the attractor $\Gamma_8$:  $v_1$ is a~power of $u_8$ and $u_8$ has the attractor $\Gamma_7$. Consequently, $v_1$ has the attractor $\Gamma_7 \cup \{U_8-1\}=\{U_3-1\}\cup \Gamma_8$. Every factor $f$ of $v_1$ crossing the position $U_3-1$ but not $U_4-1$ also crosses $U_8-1$. On the other hand, $v_2$ does not have the attractor $\Gamma_8$. The reader is invited to check that the suffix $y1301=010201301020401020130100102013010204010201301$ of $v_2$ does not cross any position of $\Gamma_{8}$. 

\begin{equation*}
\begin{array}{lcl}
u_4&=&0102013010204\\
u_5&=&01020130102040102013010\\
u_6&=&010201301020401020130100102013010204010201\\
u_7&=&0102013010204010201301001020130102040102010102013010204010201301001020130102\\
u_{8}&=&0102013010204010201301001020130102040102010102013010204010201301001020130102\\
&&01020130102040102013010010201301020401020101020130102040102013\\
u_{9}&=&010201301020\red{\textbf{4}}010201301\red{\textbf{0}}010201301020401020\red{\textbf{1}}010201301020401020130100102013010\red{\textbf{2}}\\
&&0102013010204010201301001020130102040102010102013010204010201\red{\textbf{3}}01020130102040\\
&&1020130100102013010204010201010201301020401020130100102013010201020130102040\\
&&1020130100102013010204\\
v_1&=&\underline{\textcolor{blue}{0102013010204010201301}\red{\textbf{0}}01020130102040102}0\red{\textbf{1}}010201301020401020130100102013010\red{\textbf{2}}\\
&&0102013010204010201301001020130102040102010102013010204010201\red{\textbf{3}}01020130102040\\
&&10201301001020130102040102010102013010204010201301001020130102\underline{\textcolor{blue}{010201301020}\red{\textbf{4}}\textcolor{blue}{0}}\\
&&\underline{\textcolor{blue}{10201301}001020130102040102}\\
v_2&=&\underbrace{\textcolor{gray}{010201301020}\red{\textbf{4}}\textcolor{gray}{010201301}}_w\textcolor{gray}{0010201301020401020}\red{\textbf{1}}010201301020401020130100102013010\red{\textbf{2}}\\
&&0102013010204010201301001020130102040102010102013010204010201\red{\textbf{3}}01020130102040\\
&&10201301001020130102040102010102013010204010201301001020130102\textcolor{gray}{01020130102040}\\
&&\textcolor{gray}{10201301}\red{\textbf{0}}\underbrace{\textcolor{gray}{010201301020401020}1301}_w
\end{array}
\end{equation*}

\end{example}

\section{Attractors of prefixes of binary non-simple Parry sequences}\label{sec:nonsimpleParry}
Gheeraert, Romana, and Stipulanti~\cite{GhRoSt2024} mentioned as an open problem finding minimal attractors of prefixes of non-simple Parry sequences. In this section, we answer their question for prefixes of the form $\varphi^n(0)$ of binary non-simple Parry sequences. 

Let us recall the definition of binary non-simple Parry sequences in the form of fixed points of morphisms, the assumptions on parameters follow from the properties of the Rényi expansion of unity~\eqref{ParryRenyi}.
\begin{definition}\label{def:non-simple-Parry} A~binary \emph{non-simple Parry sequence} $\uu$ is a~fixed point of the morphism $\varphi: \{0,1\}^*\to \{0,1\}^*$ defined as
$$\begin{array}{rcl}
\varphi(0)&=&0^{p}1, \\
\varphi(1)&=&0^{q}1,
\end{array}$$
where $p,q\in \mathbb{N}$, $p> q\geq 1$.
\end{definition}

\begin{example}\label{pr: NSP_priklad}

For $p=3$, $q=1$, the morphism $\varphi$ is defined as
\begin{equation*}
\begin{array}{rclcl}
\varphi(0) &=& 0001, \\
\varphi(1) &=& 01,
\end{array}
\end{equation*}
and the first five prefixes $\varphi^n(0)$ of $\uu$ look as follows
\begin{equation*}
\begin{array}{rcl}
\varphi^0(0)&=&0\,,\\
\varphi^1(0) &=& 0001\,, \\
\varphi^2(0) &=& 00010001000101\,, \\
\varphi^3(0) &=& 000100010001010001000100010100010001000101000101\,, \\
\varphi^4(0) &=& 00010001000101000100010001010001000100010100010100010001000101000100010001\\
&&01000100010001010001010001000100010100010001000101000100010001010001010001\\
&&0001000101000101\,.
\end{array}
\end{equation*}

\end{example}
\begin{remark}
It is known that $\uu$ is Sturmian if and only if $p=q+1$.
Attractors of prefixes of Sturmian sequences~\cite{Mantaci2021} are known.
\end{remark}

\begin{remark}
In the non-simple Parry case, it does not make sense to search for canonical attractors, i.e., attractors being subsets of $\{|\varphi^n(0)|-1 \ : \ n\in \mathbb N\}$. No attractor of a prefix containing both letters can form a~subset of $\{|\varphi^n(0)|-1 \ : \ n\in \mathbb N\}$, as happened in the simple Parry case.
The reason is that $\varphi^n(0)$ always ends in $1$ for $n\geq 1$, hence the positions $|\varphi^n(0)|-1$ for $n\geq 1$ are, without exception, occurrences of the letter $1$.
\end{remark}

All statements of the next handy lemma can be proved by induction.
\begin{lemma}\label{lem: NSP_prefix}
The following statements hold for the morphism $\varphi$ from Definition~\ref{def:non-simple-Parry}.
\begin{enumerate}
\item $\varphi^{n+1}(0)=\left(\varphi^n(0)\right)^p\varphi^n(1)$;
\item $\varphi^k(1)$ is a suffix of $\varphi^k(0)$ for each $k\in \mathbb N, k\geq 1$;
\item $\varphi^k(0)\varphi^{k-1}(0) \cdots \varphi(0)0$ is a prefix of $\varphi^{k+1}(0)$ for each $k \in \mathbb{N}$;
\item $1\varphi(1) \cdots \varphi^{k-1}(1)\varphi^k(1)$ is a suffix of $\varphi^{k+1}(0)$ for each $k \in \mathbb{N}$;
\item the prefix from Item 3 and the suffix from Item 4 do not overlap in $\varphi^{k+1}(0)$;
\item $\varphi^k(0)\varphi^{k-1}(0) \cdots \varphi(0)0$ is a prefix of $\varphi^{k+1}(1)$ for each $k\in \mathbb N$;
\item $\varphi(1)\varphi^2(1) \cdots \varphi^k(1)$ is a suffix of $\varphi^k(0)$ for each $k \in \mathbb{N},\ k\geq 1$;
\item $\varphi(1)\varphi^2(1)\cdots \varphi^k(1)\varphi^k(0)\varphi^{k-1}(0)\cdots \varphi(0)$ is a factor of $\varphi^{k+1}(0)$ for each $k\in \mathbb N, k\geq 1$.
\end{enumerate}
\end{lemma}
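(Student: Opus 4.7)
The plan is to verify each of the seven items by induction on $k$, relying essentially on the single core identity in item~1 together with the elementary inequality $p > q \geq 1$. Item~1 itself is immediate: since $\varphi(0) = 0^p 1$, applying the morphism $\varphi^n$ gives $\varphi^{n+1}(0) = \varphi^n(0^p 1) = (\varphi^n(0))^p \varphi^n(1)$. Item~2 then follows by induction, the base case $k = 1$ being the observation that $\varphi(1) = 0^q 1$ is a suffix of $\varphi(0) = 0^p 1$ thanks to $p \geq q$; the inductive step is obtained by applying $\varphi$ to both sides.

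For item~3, the base $k = 0$ is the statement that $0$ is a prefix of $\varphi(0)$. The inductive step uses item~1 to write $\varphi^{k+2}(0) = (\varphi^{k+1}(0))^p \varphi^{k+1}(1)$, so the initial $\varphi^{k+1}(0)$ is followed (thanks to $p \geq 2$) by another $\varphi^{k+1}(0)$, which by the inductive hypothesis starts with $\varphi^k(0) \cdots \varphi(0) 0$. Item~4 is proved analogously: $\varphi^{k+2}(0) = (\varphi^{k+1}(0))^p \varphi^{k+1}(1)$ ends with $\varphi^{k+1}(0) \, \varphi^{k+1}(1)$, and the suffix $1 \varphi(1) \cdots \varphi^k(1)$ of $\varphi^{k+1}(0)$ (from the inductive hypothesis) therefore extends to the required $1 \varphi(1) \cdots \varphi^{k+1}(1)$ of $\varphi^{k+2}(0)$.

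Item~5 requires a little more care. Applying item~1 yields $\varphi^{k+1}(1) = \varphi^k(0^q 1) = (\varphi^k(0))^q \varphi^k(1)$, so $\varphi^k(0)$ is always a prefix. If $q \geq 2$, a second copy of $\varphi^k(0)$ follows, and item~3 supplies the needed prefix $\varphi^{k-1}(0) \cdots \varphi(0) 0$ of that copy. If $q = 1$, then after $\varphi^k(0)$ comes $\varphi^k(1)$, and the required continuation $\varphi^{k-1}(0) \cdots \varphi(0) 0$ is exactly item~5 at level $k-1$; hence a straightforward induction on $k$ closes the argument. Item~6 is then a one-line consequence of item~4: applying $\varphi$ to the suffix $1 \varphi(1) \cdots \varphi^{k-1}(1)$ of $\varphi^k(0)$ produces the suffix $\varphi(1) \varphi^2(1) \cdots \varphi^k(1)$ of $\varphi^{k+1}(0) = \varphi(\varphi^k(0))$, which, after a shift of index, is item~6 at the next level; the case $k = 1$ is handled by the same base observation used for item~2.

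Finally, item~7 is the only place where several of the previous items have to be combined without an outer induction on $k$. Writing $\varphi^{k+1}(0) = (\varphi^k(0))^p \varphi^k(1)$, the first copy of $\varphi^k(0)$ ends with $\varphi(1) \varphi^2(1) \cdots \varphi^k(1)$ by item~6, and this is followed by a full second copy of $\varphi^k(0)$. What comes after that second copy depends on $p$: if $p \geq 3$, the third copy of $\varphi^k(0)$ begins with $\varphi^{k-1}(0) \cdots \varphi(0) 0$ by item~3, while if $p = 2$, the block $\varphi^k(1)$ begins with the same word by item~5. In either case $\varphi(1) \cdots \varphi^k(1) \cdot \varphi^k(0) \cdot \varphi^{k-1}(0) \cdots \varphi(0)$ is a factor of $\varphi^{k+1}(0)$, proving item~7. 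The main obstacle I foresee is keeping the induction bookkeeping straight in item~5, where the case split on $q = 1$ versus $q \geq 2$ is unavoidable, and the parallel split $p = 2$ versus $p \geq 3$ in item~7; none of the steps is individually deep, but the seven items are tightly interlocked and each invocation must be at precisely the right level.
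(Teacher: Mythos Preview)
Your proof is considerably more detailed than the paper's, which simply asserts that all seven items ``can be proved by induction'' and gives no further argument. Your treatment of items~1--5 and~7 is correct, including the case splits on $q$ in item~5 and on $p$ in item~7.

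There is, however, a genuine gap in your argument for item~6. Applying $\varphi$ to the suffix $1\,\varphi(1)\cdots\varphi^{k-1}(1)$ of $\varphi^k(0)$ (item~4 at level $k-1$) shows that $\varphi(1)\varphi^2(1)\cdots\varphi^k(1)$ is a suffix of $\varphi^{k+1}(0)$, not of $\varphi^k(0)$. No ``shift of index'' converts this into an instance of item~6: at level $k$ the ambient word must be $\varphi^k(0)$, while at level $k+1$ the suffix must extend to $\varphi(1)\cdots\varphi^{k+1}(1)$. What you have derived matches neither. A clean repair is to note from $\varphi(0)=0^{p-q}\varphi(1)$ that
\[
\varphi^{k+1}(0)=\bigl(\varphi^k(0)\bigr)^{p-q}\varphi^{k+1}(1);
\]
since $p-q\ge 1$, the word $\varphi^{k+1}(0)$ ends with $\varphi^k(0)\,\varphi^{k+1}(1)$, and the inductive hypothesis applied to $\varphi^k(0)$ immediately yields the required suffix $\varphi(1)\cdots\varphi^k(1)\varphi^{k+1}(1)$ of $\varphi^{k+1}(0)$.
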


Now, we can prove the theorem on minimal attractors of prefixes $\varphi^n(0)$ of binary non-simple Parry sequences.

\begin{theorem}\label{veta: NSP_atraktory}
Let $\uu$ be a binary non-simple Parry sequence from Definition~\ref{def:non-simple-Parry}. For each $n \in \mathbb{N},\ n\geq 1,$ the prefix $\varphi^n(0)$ has the minimal attractor 
$$\Gamma_n =  \left\{\sum_{j=0}^{n-1} |\varphi^j(0)|-1,\quad |\varphi^n(0)|-\sum_{j=1}^{n-1} |\varphi^j(1)|-1\right \}\,.$$
\end{theorem}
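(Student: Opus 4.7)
The plan is to proceed by induction on $n \ge 1$. The base case $n=1$ is direct: for $\varphi(0)=0^p 1$ we have $\Gamma_1 = \{0, p\}$, and every non-empty factor contains either a $0$ (which can be placed so that an occurrence covers position $0$) or the unique $1$ sitting at position $p$, so $\Gamma_1$ is an attractor. For the inductive step I assume $\Gamma_{n-1}$ is an attractor of $\varphi^{n-1}(0)$ and aim to lift the statement to $\varphi^n(0)$. Using the decomposition $\varphi^n(0)=(\varphi^{n-1}(0))^p\,\varphi^{n-1}(1)$ from Lemma~\ref{lem: NSP_prefix}(1) and the recurrence $|\varphi^n(0)|=p\,|\varphi^{n-1}(0)|+|\varphi^{n-1}(1)|$, I first verify the two identities
\[A_n = |\varphi^{n-1}(0)| + A_{n-1}, \qquad B_n = (p-1)\,|\varphi^{n-1}(0)| + B_{n-1},\]
where $A_n$ and $B_n$ denote the two elements of $\Gamma_n$. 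Hence $A_n$ sits at relative position $A_{n-1}$ inside the second copy of $\varphi^{n-1}(0)$ in $\varphi^n(0)$, and $B_n$ at relative position $B_{n-1}$ inside the $p$-th (last) copy.

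For any factor $w$ of $\varphi^n(0)$ that is already a factor of $\varphi^{n-1}(0)$, the inductive hypothesis gives an occurrence in $\varphi^{n-1}(0)$ crossing $A_{n-1}$ or $B_{n-1}$; translating that occurrence to the second (respectively $p$-th) copy of $\varphi^{n-1}(0)$ in $\varphi^n(0)$, which is legal since $p \ge 2$, yields an occurrence of $w$ crossing $A_n$ (respectively $B_n$).

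The remaining factors are those of $\varphi^n(0)$ not already appearing in $\varphi^{n-1}(0)$: they must span a boundary between two consecutive $\varphi^{n-1}(0)$ copies or between the last copy and $\varphi^{n-1}(1)$. Using Lemma~\ref{lem: NSP_prefix}(2), that $\varphi^{n-1}(1)$ is a suffix of $\varphi^{n-1}(0)$ (and noting that $\varphi^{n-1}(0)$ and $\varphi^{n-1}(1)$ share a common prefix of length $q\,|\varphi^{n-2}(0)|$), I write each such $w = sr$ with $s$ a non-empty suffix and $r$ a non-empty prefix of $\varphi^{n-1}(0)$, and exhibit an explicit occurrence crossing $A_n$ or $B_n$ by placing $w$ at an appropriately chosen boundary: at the first internal boundary it covers $A_n$ when $|r|\ge A_{n-1}+1$, at the $(p-1)$-th internal boundary it covers $B_n$ when $|r|\ge B_{n-1}+1$, and at the last boundary it covers $B_n$ when $|s|\ge |\varphi^{n-1}(0)|-B_{n-1}$ (and $|r|\le q\,|\varphi^{n-2}(0)|$). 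The two anchor regions justifying these placements are the prefix $\varphi^{n-1}(0)\varphi^{n-2}(0)\cdots\varphi(0)\,0$ of length $A_n+1$ ending at $A_n$ (Lemma~\ref{lem: NSP_prefix}(3)) and the suffix $1\,\varphi(1)\cdots\varphi^{n-1}(1)$ starting at $B_n$ (Lemma~\ref{lem: NSP_prefix}(4)).

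The main obstacle I anticipate is verifying that the three size conditions above collectively cover every genuinely new boundary-crossing factor. I plan to handle this via a careful case distinction on $|s|$, $|r|$, and the common-prefix length $q\,|\varphi^{n-2}(0)|$: when both $|s|$ and $|r|$ are small, items~(2) and~(6) of Lemma~\ref{lem: NSP_prefix} imply that $s$ is already a suffix of $\varphi^{n-2}(0)$ and $r$ fits inside $\varphi^{n-2}(0)\varphi^{n-3}(0)\cdots\varphi(0)\,0$, so $w=sr$ is in fact a factor of $\varphi^{n-1}(0)$ and falls back on the first case. For the delicate intermediate regime I expect to invoke the long structural factor $\varphi(1)\varphi^2(1)\cdots\varphi^{n-1}(1)\,\varphi^{n-1}(0)\,\varphi^{n-2}(0)\cdots\varphi(0)$ from Lemma~\ref{lem: NSP_prefix}(7), which sits in $\varphi^n(0)$ straddling the first internal boundary and extending past $A_n$, so that every residual short boundary-crossing factor embeds in this rich window and inherits an occurrence crossing $A_n$.
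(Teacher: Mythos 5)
Your strategy is the same as the paper's: induction on $n$, the decomposition $\varphi^n(0)=(\varphi^{n-1}(0))^p\varphi^{n-1}(1)$, the translation identities $A_n=|\varphi^{n-1}(0)|+A_{n-1}$ and $B_n=(p-1)|\varphi^{n-1}(0)|+B_{n-1}$, and the reduction of residual boundary-crossing factors to factors of $\varphi^{n-1}(0)$; your ``both small'' fallback is exactly the paper's concluding contradiction. The one place where your plan as written does not close is precisely the regime you flag, and the repair is not Item~7 but Item~5 of Lemma~\ref{lem: NSP_prefix}. By Items~3 and~5, the word $\varphi^{n-2}(0)\cdots\varphi(0)0$, of length $A_{n-1}+1$, is a common prefix of $\varphi^{n-1}(0)$ and $\varphi^{n-1}(1)$; since $q|\varphi^{n-2}(0)|$ is smaller than $A_{n-1}$ already for $q=1$ and $n\ge 4$, that is the bound you must compare $|r|$ against. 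With it, every boundary factor with $|r|\le A_{n-1}$ (the only ones not caught by the $A_n$-placement) can be slid to the last boundary, where it either covers $B_n$ or has $s$ a suffix of $\varphi(1)\cdots\varphi^{n-2}(1)$ and drops into your small--small case, so the ``intermediate regime'' is empty. The Item-7 window would not rescue it anyway: an occurrence of $w$ straddling the natural boundary inside that long factor with $|r|\le A_{n-1}$ ends strictly before $A_n$, so embedding $w$ there does not force a crossing.

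Two smaller points. Your uniform decomposition $w=sr$ with $r$ a nonempty prefix of $\varphi^{n-1}(0)$ does not literally cover factors that reach into the final block $\varphi^{n-1}(1)$ beyond the common prefix, nor factors spanning several copy boundaries. The paper dispatches the former by noting that everything strictly to the right of $B_n$ is $\varphi(1)\cdots\varphi^{n-1}(1)$, a suffix of $\varphi^{n-1}(0)$ by Item~6, hence already a factor of $(\varphi^{n-1}(0))^p$; and the latter by observing that a factor of $(\varphi^{n-1}(0))^p$ of length at least $|\varphi^{n-1}(0)|$ always admits an occurrence through $A_n$. Both repairs fit inside your framework, after which your plan coincides with the paper's proof.
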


\begin{proof}
For $n \in \mathbb{N}, \ n\geq 1,$ by Item 3 of Lemma~\ref{lem: NSP_prefix}, the word $\varphi^{n-1}(0)\varphi^{n-2}(0) \cdots \varphi(0)0$ is a prefix of $\varphi^n(0)$, and by Item 4 of Lemma~\ref{lem: NSP_prefix}, the word $1\varphi(1) \cdots \varphi^{n-2}(1)\varphi^{n-1}(1)$ is a suffix of $\varphi^n(0)$. 
Moreover, by Item 5, they do not overlap. Consequently, $\varphi^n(0)$ has the form
\begin{equation}\label{eq:un}
   \varphi^n(0) = \varphi^{n-1}(0)\varphi^{n-2}(0) \cdots \varphi(0)\red{\textbf{0}} \cdots \red{\textbf{1}}\varphi(1) \cdots \varphi^{n-2}(1)\varphi^{n-1}(1).
\end{equation}
For $n \in \mathbb{N}, \ n\geq 1,$ we will show by induction that $\varphi^n(0)$ has the attractor $$\Gamma_n = \left \{\sum_{j=0}^{n-1} |\varphi^j(0)|-1,\quad |\varphi^n(0)|-\sum_{j=1}^{n-1} |\varphi^j(1)|-1\right \}\,;$$ 
the positions of the attractor are highlighted in red in~\eqref{eq:un}.

For $n = 1$, the prefix $\varphi(0)=0^{p}1$ clearly has the attractor $\Gamma_1 = \{|0|-1, |\varphi(0)|-1\} = \{0, p\}$. The positions of the attractor $\Gamma_1$ are denoted below in red
\begin{equation*}
   \varphi(0) = \underbrace{\red{\textbf{0}}0 \cdots 0}_{p\text{-times}}\red{\textbf{1}}\,.
\end{equation*}

Let us assume that the statement holds for some $n\geq 1$, i.e., $\varphi^n(0)$ has the attractor 
$$\Gamma_n =  \left\{\sum_{j=0}^{n-1} |\varphi^j(0)|-1,\quad |\varphi^n(0)|-\sum_{j=1}^{n-1} |\varphi^j(1)|-1\right\}\,.$$ 
We will show that $\varphi^{n+1}(0)$ has the attractor $$\Gamma_{n+1} = \left \{\sum_{j=0}^n |\varphi^j(0)|-1,\quad |\varphi^{n+1}(0)|-\sum_{j=1}^n |\varphi^j(1)|-1\right\}\,;$$ 
depicted below in red. The prefix $\varphi^{n+1}(0)$ has the following form, where $u=\left(\varphi^n(0)\right)^p$ by Item 1 of Lemma~\ref{lem: NSP_prefix},
\begin{equation}\label{eq: non-simple}
    \varphi^{n+1}(0)= \underbrace{\varphi^n(0)\underbrace{\varphi^{n-1}(0) \cdots \varphi(0)\red{\textbf{0}} \cdots}_{\varphi^n(0)} \cdots\underbrace{\cdots\red{\textbf{1}}\varphi(1) \cdots \varphi^{n-1}(1)}_{\varphi^n(0)}}_u\varphi^n(1)\,.
\end{equation}
Each factor $f$ of $\varphi^{n+1}(0)$ has either an occurrence containing the position $|\varphi^{n+1}(0)|-\sum_{j=1}^n |\varphi^j(1)|-1$ (corresponding to the red letter $1$) or $f$ is a factor of $u$ or $f$ is a factor of $\varphi(1) \cdots \varphi^{n-1}(1)\varphi^n(1)$, which is a suffix of $\varphi^n(0)$ by Item 7 of Lemma~\ref{lem: NSP_prefix}, thus $f$ is again a factor of $u$. 
Using the fact that $u$ is a power of $\varphi^n(0)$, if a factor $f$ of $u$ is of length greater than or equal to $|\varphi^n(0)|$, then $f$ necessarily crosses the position $\sum_{j=0}^{n} |\varphi^j(0)|-1$ (corresponding to the red letter $0$). 

It remains to consider a factor $f$ of $u$ shorter than $\varphi^n(0)$.
\begin{itemize}
\item If $f$ is contained in $\varphi^n(0)$, then $f$ crosses by induction assumption the attractor $\Gamma_n$ in $\varphi^n(0)$, hence $f$ clearly crosses the attractor $\Gamma_{n+1}$ in $\varphi^{n+1}(0)$. Compare~\eqref{eq: non-simple} and~\eqref{eq:un}.
\item Assume now that $f$ is not a factor of $\varphi^n(0)$. We will show by contradiction that $f$ crosses $\Gamma_{n+1}$. 
Since $f$ is not contained in $\varphi^n(0)$, necessarily, $f$ has an occurrence containing the two middle positions of $\varphi^n(0)\varphi^n(0)$. 
\begin{equation}\label{eq:middle}
    \varphi^n(0)|\varphi^n(0)= \cdots \textcolor{blue}{\textbf{1}}\varphi(1) \cdots \varphi^{n-1}(1)|\varphi^{n-1}(0)\varphi^{n-2}(0) \cdots \varphi(0)\textcolor{blue}{\textbf{0}}\cdots
\end{equation}

Moreover, as we suppose that $f$ does not cross $\Gamma_{n+1}$, the considered occurrence of $f$ does not contain the blue positions depicted in \eqref{eq:middle}. Let us explain why. If $f$ contains the blue $\textcolor{blue}{\textbf{0}}$, then $f$ clearly crosses the position $\sum_{j=0}^{n} |\varphi^j(0)|-1$ (corresponding to the red letter $0$) in the attractor $\Gamma_{n+1}$. Assume $f$ does not contain the blue $\textcolor{blue}{\textbf{0}}$, but contains the blue $\textcolor{blue}{\textbf{1}}$, then by Item 6 of Lemma~\ref{lem: NSP_prefix},  $f$ is contained in $\varphi^n(0)\varphi^n(1)$ and crosses the position $|\varphi^{n+1}(0)|-\sum_{j=1}^n |\varphi^j(1)|-1$ (corresponding to the red letter $1$) in the attractor $\Gamma_{n+1}$.

For $n=1$, we have $\varphi(0)|\varphi(0)=0^p\textcolor{blue}{\textbf{1}}|\textcolor{blue}{\textbf{0}}0^{p-1}1$, therefore such $f$ does not exists.
For $n\geq 2$, $f$ is a factor of $\varphi(1)\varphi^2(1) \cdots \varphi^{n-1}(1)\varphi^{n-1}(0)\varphi^{n-2}(0) \cdots \varphi(0)$, which is, by Item~8 of Lemma~\ref{lem: NSP_prefix}, a~factor of $\varphi^n(0)$. This contradicts the assumption that $f$ is not contained in $\varphi^n(0)$.
\end{itemize}
To sum up, we have shown that each factor of $\varphi^{n+1}(0)$ crosses $\Gamma_{n+1}$.

Obviously, for each prefix, its attractor from Theorem~\ref{veta: NSP_atraktory} has the size equal to two, that is to the number of distinct letters contained in the prefix. Consequently, the attractors are minimal.
\end{proof}

\begin{example}
Let us illustrate the attractors from Theorem~\ref{veta: NSP_atraktory} on the prefixes $\varphi^n(0)$ from Example~\ref{pr: NSP_priklad}; the positions of attractors are highlighted in red. 
\begin{equation*}
\begin{array}{rcl}
\varphi^1(0) &=& \red{\textbf{0}}00\red{\textbf{1}}\,, \\
\varphi^2(0) &=& 0001\red{\textbf{0}}001000\red{\textbf{1}}01\,, \\
\varphi^3(0) &=& 000100010001010001\red{\textbf{0}}00100010100010001000\red{\textbf{1}}01000101\,, \\
\varphi^4(0)&=& 000100010001010001000100010100010001000101000101000100010001010001\red{\textbf{0}}0010001\\
&&0100010001000101000101000100010001010001000100010100010001000\red{\textbf{1}}010001010001\\
&&0001000101000101\,.
\end{array}
\end{equation*}

\end{example}

\section{Open problems}
Our research was inspired by the paper~\cite{GhRoSt2024}, where the authors studied attractors of prefixes of fixed points of morphisms of the form
\begin{equation}\label{c}
    0 \to 0^{c_0}1, \ 1 \to 0^{c_1}2, \ 2 \to 0^{c_2}3, \dots, m-1 \to 0^{c_{m-1}}, 
\end{equation}
where $c_i \in \mathbb N$ for all $i \in \{0, 1, \ldots, m-1\}$, $c_0 \geq 1, c_{m-1} \geq 1$.

Simple Parry sequences form a subclass of such fixed points. The authors found attractors of prefixes of size $m+1$, i.e., number of letters increased by one. However, they conjectured that attractors of alphabet size should exist. Furthermore, they asked under which conditions the minimal attractors are canonical, i.e., form a~subset of $\{U_n-1 \ : \ n \in \mathbb{N}\}$. 

In this paper, we proved that prefixes of simple Parry sequences indeed have attractors of alphabet size, i.e., we described minimal attractors of prefixes of simple Parry sequences, see Theorem~\ref{veta: SP_atraktory2}. Moreover, for binary sequences, see Proposition~\ref{prop: binary_ternary}, and for general sequences under some additional conditions, see Theorem~\ref{veta: SP_atraktory}, the minimal attractors we found are canonical. The assumptions of Theorem~\ref{thm: affine} and Theorem~\ref{veta: SP_atraktory} are sufficient, not necessary, therefore, the description of simple Parry sequences whose minimal attractors of all prefixes are canonical is not complete. 
% In this direction, it would be helpful to describe the longest prefixes of simple Parry sequences that are powers of $u_n$ in terms of the associated R\'enyi expansion of unity $d_\beta(1)=t_1\dots t_m$.

In addition, the authors of~\cite{GhRoSt2024} asked how the minimal attractors of prefixes of non-simple Parry sequences look like. In this paper, we answered the question only for prefixes of some particular form in the binary case.

As mentioned, simple Parry sequences form a subclass of fixed points of morphisms from~\eqref{c}, hence it remains an open problem to find minimal attractors in full generality.
Concerning non-simple Parry sequences over larger alphabets, according to our brief experience, finding minimal attractors of prefixes seems to be a harder task than the simple Parry case.

%Vice versa, the critical exponent is known for non-simple Parry sequences~\cite{BaKlPe2011}, but not for simple Parry sequences.

Recently, Rényi numeration systems have been extended to the broader class of real Cantor numeration systems, where numbers are represented using a sequence of bases
$\mathcal{B} = (\beta_n)_{n \in \mathbb{Z}}$ with each $\beta_n > 1$. 
A~positive real number is then represented in the form
\begin{equation*}
    \sum_{n=0}^{N-1} a_n \beta_{n-1} \cdots \beta_0 + \sum_{n=1}^{\infty} \frac{a_{-n}}{\beta_{-1} \cdots \beta_{-n}}, \quad \text{where } N \geq 1, a_i \in \mathbb{N}, a_i < \beta_i \,.
\end{equation*}
The Rényi numeration system corresponds to the special case, where the base $\mathcal{B}$ is constant. 
A natural analogue of $\beta$-integers, the so-called $\mathcal{B}$-integers, can be defined in this setting.  As shown in the recent work~\cite{ChCiMaPe2025}, Cantor real numeration systems define a substantially larger class of sequences than the Rényi case. 
When the base $\mathcal{B}$ is taken to be a purely periodic sequence, we call it an
{\em alternate} base. 
In such case, the sequence coding distances between consecutive $\mathcal{B}$-integers is fixed by a primitive morphism. 
The attractors of prefixes / factors of such sequences are completely unknown.

In a broader context, it remains an open question to determine minimal attractors of prefixes / factors of fixed points of morphisms. The first steps in this direction have been done by Cassaigne et al.~\cite{Ca2024}.
\section{Acknowledgements}
We would like to thank two anonymous reviewers. In particular, one of them read the text in an exceptionally careful way and suggested many useful remarks that helped us make the proofs more readable and comprehensible.

\bibliographystyle{abbrv}
% use the following instead if you encounter problems 
%\bibliographystyle{alpha}
\bibliography{biblio}

\begin{thebibliography}{10}

\bibitem{AmFrMaPe2006}
P.~Ambrož, C.~Frougny, Z.~Masáková, and E.~Pelantová.
\newblock Palindromic complexity of infinite words associated to simple {Parry} numbers.
\newblock {\em Annales de l'Institut Fourier}, 56:2131--2160, 2006.

\bibitem{BaKlPe2011}
L.~Balková, K.~Klouda, and E.~Pelantová.
\newblock Critical exponent of infinite words coding beta-integers associated with non-simple {Parry} numbers.
\newblock {\em Integers Elec. J. Combin. Number Theory}, 11B:1--25, 2011.

\bibitem{BaMa2009}
L.~Balková and Z.~Masáková.
\newblock Palindromic complexity of infinite words associated with non-simple {Parry} numbers.
\newblock {\em RAIRO Theor. Inform. Appl.}, 43:145--163, 2009.

\bibitem{BeMaPe2007}
J.~Bernat, Z.~Masáková, and E.~Pelantová.
\newblock Affine factor complexity of infinite words associated with simple {Parry} numbers.
\newblock {\em Theor. Comp. Sci.}, 389:12--25, 2007.

\bibitem{BeCrRo2025}
M.-P. Béal, M.~Crochemore, and G.~Romana.
\newblock Checking and producing word attractors.
\newblock arXiv:2509.08503, 2025.

\bibitem{BeGhMe2024}
P.~Béaur, F.~Gheeraert, and B.~{Hellouin de Menibus}.
\newblock String attractors and bi-infinite words.
\newblock arXiv:2403.13449, 2024.

\bibitem{Ca2024}
J.~Cassaigne, F.~Gheeraert, A.~Restivo, G.~Romana, M.~Sciortino, and M.~Stipulanti.
\newblock New string attractor-based complexities for infinite words.
\newblock {\em J. Combin. Theory Ser. A}, 208:105936, 2024.

\bibitem{ChCiMaPe2025}
{\'E}.~Charlier, C.~Cisternino, Z.~Mas\'akov\'a, and E.~Pelantov\'a.
\newblock Substitutions and {C}antor real numeration systems.
\newblock {\em Acta Math. Hungar.}, 176(1):15--47, 2025.

\bibitem{Dolce2023}
F.~Dolce.
\newblock String attractors for factors of the {Thue}-{Morse} word.
\newblock In A.~Frid and R.~Mercaş, editors, {\em Combinatorics on Words. WORDS 2023}, volume 13899 of {\em Lecture Notes in Computer Science}, pages 117--129. Springer, Cham., 2023.

\bibitem{DoMa2015}
D.~Dombek, Z.~Masáková, and T.~Vávra.
\newblock Confluent {Parry} numbers, their spectra, and integers in positive- and negative-base number systems.
\newblock {\em J. Th\'{e}orie Nombres Bordeaux}, 27:745--768, 2015.

\bibitem{Dv2023}
L.~Dvořáková.
\newblock String attractors of episturmian sequences.
\newblock {\em Theoret. Comput. Sci.}, 986:114341, 2024.

\bibitem{DvHe2024}
L.~Dvořáková and V.~Hendrychová.
\newblock String attractors of {Rote} sequences.
\newblock {\em Discrete Mathematics \& Theoretical Computer Science}, vol. 26:3, Nov 2024.

\bibitem{Fa1995}
S.~Fabre.
\newblock Substitutions et $\beta$-systèmes de numération.
\newblock {\em Theoret. Comput. Sci.}, 137:219--236, 1995.

\bibitem{GhRoSt2024}
F.~Gheeraert, G.~Romana, and M.~Stipulanti.
\newblock String attractors of some simple-{P}arry automatic sequences.
\newblock {\em Theory Comput. Syst.}, 68(6):1601–1621, 2024.

\bibitem{KempaPrezza2018}
D.~Kempa and N.~Prezza.
\newblock At the roots of dictionary compression: String attractors.
\newblock In {\em STOC 2018}, pages 827--840. ACM, 2018.

\bibitem{KlPe2009}
K.~Klouda and E.~Pelantová.
\newblock Factor complexity of infinite words associated with non-simple {Parry} numbers.
\newblock {\em Integers Elec. J. Combin. Number Theory}, 9:281--310, 2009.

\bibitem{Kociumaka2021}
T.~Kociumaka, G.~Navarro, and N.~Prezza.
\newblock Towards a definitive measure of repetitiveness.
\newblock In {\em {LATIN}, LNCS}, volume 12118, pages 207--219. Springer, 2020.

\bibitem{Kutsukake2020}
K.~Kutsukake, T.~Matsumoto, Y.~Nakashima, S.~Inenaga, H.~Bannai, and M.~Takeda.
\newblock On repetitiveness measures of {Thue}-{Morse} words.
\newblock In {\em {SPIRE}, LNCS}, volume 12303, pages 213--220. Springer, 2020.

\bibitem{Lothaire}
M.~Lothaire.
\newblock {\em Algebraic combinatorics on words}.
\newblock Cambridge University Press, Cambridge, 2002.

\bibitem{Mantaci2021}
S.~Mantaci, A.~Restivo, G.~Romana, G.~Rosone, and M.~Sciortino.
\newblock A combinatorial view on string attractors.
\newblock {\em Theoret. Comput. Sci.}, 850:236--248, 2021.

\bibitem{MaPe2011}
Z.~Masáková and E.~Pelantová.
\newblock Ito-{Sadahiro} numbers vs. {Parry} numbers.
\newblock {\em Acta Polytechnica}, 51:59--64, 2011.

\bibitem{Pa1960}
W.~Parry.
\newblock On the $\beta$-expansions of real numbers.
\newblock {\em Acta Math. Acad. Sci. Hungar.}, 11:401--416, 1960.

\bibitem{Re1957}
A.~Rényi.
\newblock Representations for real numbers and their ergodic properties.
\newblock {\em Acta Math. Acad. Sci. Hungar.}, 8:477--493, 1957.

\bibitem{Shallit2021}
L.~Schaeffer and J.~Shallit.
\newblock String attractors of automatic sequences.
\newblock arXiv:2012.06840, 2020.

\bibitem{Th1989}
W.~P. Thurston.
\newblock Groups, tilings, and finite state automata.
\newblock Research report gcg1, University of Minnesota, Geometry supercomputer project, 1989.

\bibitem{Tu2015}
O.~Turek.
\newblock Abelian properties of {Parry} words.
\newblock {\em Theoret. Comput. Sci.}, 566:26--38, 2015.

\end{thebibliography}
\label{sec:biblio}

\end{document}